\documentclass[12pt]{amsart} 
\usepackage{amscd}
\usepackage{amssymb}
\usepackage{a4wide}
\usepackage{amstext}
\usepackage{amsthm}
\usepackage{xcolor}
\usepackage{cite}
\usepackage[T1,T2A]{fontenc}
\usepackage[utf8]{inputenc}
\usepackage{url}
\usepackage{amsfonts}
\usepackage{amssymb, amsthm}
\usepackage{amsmath}
\usepackage{mathtools}
\usepackage{needspace}
\usepackage[pdftex]{graphicx}
\usepackage{hyperref}
\usepackage{datetime}
\usepackage{epigraph}
\usepackage{verbatim}
\usepackage{mathtools}
\usepackage{xcolor}
\numberwithin{equation}{section}

\newcommand{\Z}{\mathbb{Z}}

\newcommand{\N}{\mathbb{N}}
\newcommand{\R}{\mathbb{R}}

\newcommand{\Cm}{\mathbb{C}}

\newcommand{\eps}{\varepsilon}

\newcommand{\abs}[1]{| #1 |}

\newcommand{\norm}[1]{\left\Vert #1 \right\Vert}

\DeclareMathOperator{\beqq}{\begin{equation}} 

\DeclareMathOperator{\eeqq}{\end{equation}}

\renewcommand{\phi}{\varphi}

\newcommand{\beq}{\begin{equation}}
\newcommand{\eeq}{\end{equation}}

\newcommand{\F}{\mathcal{F}}

\newtheorem{Thm}{Theorem}[section]
\newtheorem{theorem}[Thm]{Theorem}
\newtheorem{example}{Example}
\newtheorem{lemma}[Thm]{Lemma}
\newtheorem{proposition}[Thm]{Proposition}
\newtheorem{corollary}[Thm]{Corollary}
\newtheorem{remark}[Thm]{Remark}

\newtheorem{claim}[Thm]{Claim}
\newtheorem{definition}{Definition}

\usepackage{amsrefs}

\makeatletter
\renewcommand{\eprint}[1]{\href{https://arxiv.org/abs/#1}{arXiv:#1}}

\makeatletter
\BibSpec{article}{%
    +{}  {\PrintAuthors}                {author}
    +{,} { \textit}                     {title}
    +{.} { }                            {part}
    +{:} { \textit}                     {subtitle}
    +{,} { \PrintContributions}         {contribution}
    +{.} { \PrintPartials}              {partial}
    +{,} { }                            {journal}
    +{}  { \textbf}                     {volume}
    +{}  { \PrintDatePV}                {date}
    +{,} { \issuetext}                  {number}
    +{,} { \eprintpages}                {pages}
    +{,} { }                            {status}
    +{,} { \PrintDOI}                   {doi}
    +{,} { \eprint}                     {eprint}
    +{}  { \parenthesize}               {language}
    +{}  { \PrintTranslation}           {translation}
    +{;} { \PrintReprint}               {reprint}
    +{.} { }                            {note}
    +{.} {}                             {transition}
}
\makeatother

\makeatletter
\@ifundefined{ReviewList}{}{\renewcommand{\ReviewList}[1]{}}
\@ifundefined{PrintReviews}{}{\renewcommand{\PrintReviews}[1]{}}
\makeatother

\title{Sharp estimates for eigenvalues of localization operators with applications to area laws}

\begin{document}
\sloppy

\author{Aleksei Kulikov}
\address{Aleksei Kulikov,
\newline University of Copenhagen, Department of Mathematical Sciences,
Universitetsparken 5, 2100 Copenhagen, Denmark,
\newline {\tt lyosha.kulikov@mail.ru} 
}

\author{Martin Dam Larsen}
\address{Martin Dam Larsen,
\newline University of Copenhagen, Department of Mathematical Sciences,
Universitetsparken 5, 2100 Copenhagen, Denmark,
\newline {\tt mdl@math.ku.dk} 
}

\begin{abstract} 
We study the eigenvalues of the localization operator $S_{A, B} = P_A\mathcal{F}^{-1}P_B\mathcal{F} P_A$, where $\mathcal{F}$ is the Fourier transform and $A = cA_0, B = B_0$ for some fixed sets $A_0, B_0\subset \mathbb{R}^d$ and a large parameter $c > 0$. For the counting function of the eigenvalues $|\{n: \varepsilon < \lambda_n(A,B)\le 1-\varepsilon\}|$ we obtain  a sharp uniform upper bound if one of the sets is a finite disjoint union of parallelepipeds and a bound which is only a single logarithm off the conjectural optimal bound in the general case. These bounds are applied to the estimation of traces ${\rm{Tr}}\, f(S_{A,B})$ for functions $f$ with a very low regularity, in particular establishing an enhanced area law in the former case.
\end{abstract}

\maketitle
\pagestyle{plain}
\section{Introduction}
\subsection{Time-frequency localization operator}
For a measurable set $A\subset\R^d$ by $P_A$ we denote the projection onto $A$ and by $Q_A$ we denote the Fourier projection onto $A$
\[
Q_A = \F^{-1}P_A\F,
\]
where $\F$ is the Fourier transform
\[
\mathcal{F}f(\xi) = \hat{f}(\xi) = \int_{\R^d} e^{-2\pi i x\cdot \xi} f(x) \, dx,
\]
whose inverse is
$$\F^{-1}f(x) = \check{f}(x) =\int_{\R^d} e^{2\pi i x\cdot \xi}f(\xi)\, d\xi.$$

For a pair of measurable sets $A, B\subset\R^d$ by $S_{A,B}$ we denote the time-frequency localization operator
\[
S_{A,B} = P_AQ_BP_A.
\]
This is a non-negative definite self-adjoint operator on $L^2(\R^d)$ with the integral kernel
\begin{equation}\label{kernel}
S_{A,B}(x,y) = 1_{A}(x) \check{1}_B(x-y) 1_A(y), \quad x,y\in \R^d.
\end{equation}
If the measures of $A$ and $B$ are finite, $S_{A,B}$ is a trace class operator with ${\rm{Tr}}(S_{A,B}) = |A| |B|$. In particular, it is compact and as such it has a sequence of eigenvalues
$$1 > \|S_{A,B}\| =\lambda_1(A,B) \ge \lambda_2(A,B)\ge\ldots \ge 0.$$
We note for future reference that $S_{A,B}$ obviously does not change if we modify $A$ and $B$ by sets of measure zero.

It can be shown that the eigenvalues remain the same if we swap $A$ and $B$, that is $\lambda_n(A,B)=\lambda_n(B,A)$. The fact that $\lambda_1(A,B)$ is strictly less than $1$ says that there does not exist a non-zero function such that it and its Fourier transform have supports of finite measure. This is known as the uncertainty principle of Benedicks and Amrein--Berthier \cites{Amrein_Berthier77,Benedicks85}. In this paper we will be studying the finer properties of the distribution of these eigenvalues.

The usual regime that is considered is when the sets $A$ and $B$ are scalings of some fixed sets $A_0, B_0$. So, we will assume that $A = cA_0$ and $B = B_0$ for a large parameter $c$. Note that we do not put a scaling onto $B$, as it can be transferred to the scaling on $A$ by an affine change of variables without altering the eigenvalues. {More general affine changes of variables will play a role in our proofs later.}

The goal of the present paper is to study the distribution of eigenvalues $\lambda_n(cA_0, B_0)$. We expect that the eigenvalues exhibit a phase transition: the first $\approx c^d|A_0||B_0|$ eigenvalues are very close to $1$, then there are only $\asymp c^{d-1}\log c$ intermediate eigenvalues away from $0$ and $1$, and after that the eigenvalues decay to $0$ extremely fast. The intermediate part of the spectrum is usually referred to as the plunge region. This picture is particularly clear in dimension $1$ in the case when $A_0$ and $B_0$ are fixed intervals, where the exact limiting behaviour and very precise uniform estimates of the eigenvalues are known \cites{Karnik_Romberg_Davenport,widom_landau,kulikov_2026,Bonami_Karoui_17,Fuchs}, see also Section 2 below.

In higher dimensions much less is known as the geometry of sets $A_0, B_0$ starts to play a significant role. To describe the results, let us introduce the counting functions for the eigenvalues.
\begin{equation}\label{counting functions}
\begin{aligned}
N_\eps(A, B) &= |\{n: \lambda_n(c) > \eps\}|,\\
\Lambda^+_\eps(A,B) &= \left|\left\{n: 1 - \eps > \lambda_n(c) > \frac{1}{2}\right\}\right|,\\
\Lambda^-_\eps(A,B) &= \left|\left\{n: \frac{1}{2} \ge \lambda_n(c) > \eps\right\}\right|,\\
\Lambda_\eps(A,B) &= \Lambda^+_\eps(A,B)+\Lambda^-_\eps(A,B) = |\{n: 1 - \eps \ge \lambda_n(c) > \eps\}|.
\end{aligned}
\end{equation} 
We chose this arrangement of strict and non-strict inequalities to ensure that for $0 < \eps < \frac{1}{2}$ we have
\begin{equation}\label{N and Lambda}
N_{1-\eps}(A,B) = N_{1/2}(A,B) - \Lambda_\eps^+(A,B), \quad N_\eps(A,B) = N_{1/2}(A,B) + \Lambda_\eps^-(A,B).
\end{equation}
Of course, for all applications whether we have strict or non-strict inequalities is immaterial.

Previous results established bounds on $N_\eps(cA_0, B_0)$ and $\Lambda_\eps(cA_0,B_0)$ under strong enough assumptions on the boundaries $\partial A_0$ and $\partial B_0$. In particular, Sobolev \cite{Sobolev13} established a two-term asymptotic formula for $N_\eps(cA_0,B_0)$ for fixed $\eps > 0$. There are also several uniform estimates on $N_\eps(cA_0, B_0)$ and $\Lambda_\eps(cA_0, B_0)$ with $\eps \to 0$ as $c\to \infty$ \cites{Israel1,Israel2,bad_combinatorics, Marceca}.

In this work we establish estimates on $\Lambda_\eps(cA_0, B_0)$ which are on the one hand better than all of the previously obtained estimates, in particular being sharp in some cases, and on the other hand requiring much weaker and more natural geometric conditions on the sets $A_0, B_0$.

The form of our results depends on the assumptions we put on the sets $A_0$, $B_0$. If both of them are axis-parallel boxes then we can leverage known estimates in the one-dimensional case to establish precise two-sided estimates on $\Lambda^{\pm}_\eps(cA_0, B_0)$. If one of the sets is a disjoint finite union of parallelepipeds and the other satisfies a very weak boundary regularity condition then we have sharp uniform bounds on $\Lambda^{\pm}_\eps(cA_0, B_0)$ from above. This in particular allows us to find two-term asymptotics for ${\rm{Tr}}f(S_{A,B})$ for very rough functions $f$. For the general case, the bound we obtain is off the conjectural one by at most a single logarithm. We will now present the cases we consider in the increasing order of generality.

\subsection{Case of two boxes}

In this section we consider the case $A = [0, a]^d$, $B = [0,b]^d$. Since the variables separate, the operator $S_{A,B}$ splits into a tensor power $S_{A,B} = \left(S_{[0,a],[0,b]}\right)^{\otimes d}$. In particular, the eigenvalues depend only on the product $c = ab$, and we get that the eigenvalues of $S_{A,B}$ are all the products of $d$ eigenvalues of the operator $S_{[0,a],[0,b]}$ counted with multiplicity. Using this we can generalize all of the estimates on the counting functions from dimension $1$ to higher dimensions with a simple combinatorial argument.

\begin{theorem}\label{two cubes}
Consider $A = B = [0, 1]^d$, $d\geq 1$, and $c\geq 2$. There exists $\alpha_d \geq 4$ such that
\begin{equation}\label{Lambda pm, upper, two cubes}
\Lambda_\eps^{\pm}(c A, B) \lesssim c^{d-1} \log(\tfrac{1}{\eps}) \log\left(\frac{\alpha_d \, c}{\log\left(\tfrac{1}{\eps}\right)}\right)
\end{equation}
uniformly for all $\alpha_d^{-c}< \eps < 1/2$. Moreover, if $\eps < c^{-\alpha_d}$ then we also have
\begin{equation}\label{Lambda pm, lower, two cubes}
\Lambda_\eps^{\pm}(c A, B) \gtrsim c^{d-1} \log(\tfrac{1}{\eps}) \log\left(\frac{\alpha_d \, c}{\log\left(\tfrac{1}{\eps}\right)}\right)
\end{equation}
If $\eps \leq \alpha_d^{-c}$ then there are no eigenvalues larger than $1-\eps$ and 
\begin{equation}\label{Lambda -, equivalence, two cubes}
\Lambda_\eps^- \asymp \left( \frac{\log(\tfrac{1}{\eps})}{\log \left(\tfrac{\log\left(\frac{1}{\eps}\right)}{c}\right)}\right)^d.
\end{equation}
\end{theorem}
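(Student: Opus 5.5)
The approach is to reduce everything to dimension one through the tensor structure $S_{cA,B} = (S_{[0,c],[0,1]})^{\otimes d}$. Write $\mu_1\ge\mu_2\ge\dots$ for the one-dimensional eigenvalues $\lambda_n([0,c],[0,1])$. Then the $d$-dimensional eigenvalues are the products $\mu_{n_1}\cdots\mu_{n_d}$. From the one-dimensional literature recalled in Section 2 (Landau--Widom, Karnik--Romberg--Davenport, Bonami--Karoui, and the uniform estimates of \cite{kulikov_2026}), I will assume the following uniform description.

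\begin{itemize}
\item About $c$ eigenvalues are close to $1$, more precisely $1-\mu_n \le e^{-\kappa(c-n)}$ for $n\le c-C\log c$.
\item The plunge region obeys $\#\{n: \delta<\mu_n\le 1-\delta\}\asymp \log(1/\delta)\log(\alpha c/\log(1/\delta))$ for $\alpha^{-c}<\delta<1/2$.
\item Past the plunge there is superexponential decay, $\mu_n\approx \bigl(\frac{ec}{4n}\bigr)^{2n}$ for $n\gg c$. Equivalently, $\#\{n:\mu_n>\delta\}\asymp L/\log(L/c)$ with $L=\log(1/\delta)$ when $\delta\le\alpha^{-c}$.
\end{itemize}

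For the upper bound \eqref{Lambda pm, upper, two cubes}, take a product $\prod\mu_{n_i}\in(\eps,1-\eps]$. Because every factor is at most $1$, each factor satisfies $\mu_{n_i}>\eps$. Because the product is at most $1-\eps$, at least one factor has $\mu_{n_i}\le(1-\eps)^{1/d}\le 1-\eps/d$. Call this factor the plunge factor. There are $d$ choices for its coordinate, and by the one-dimensional count at most $\lesssim \log(d/\eps)\log(\alpha c/\log(1/\eps))$ choices for its index. The remaining $d-1$ indices all satisfy $\mu_{n_i}>\eps$, so each ranges over $N_\eps^{(1)}$ values. This is the delicate point, since naively $N_\eps^{(1)}\le c+O(\log(1/\eps)\cdot\ldots)$, which is only $\lesssim c$ when $\log(1/\eps)\lesssim c$. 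That condition is exactly our regime, since $\eps>\alpha_d^{-c}$ gives $N^{(1)}_\eps\lesssim c$. Multiplying gives $c^{d-1}\log(1/\eps)\log(\cdot)$. For $\Lambda^\pm$ separately it suffices to bound $\Lambda_\eps$, since $\Lambda^{\pm}\le\Lambda_\eps$.

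For the lower bound \eqref{Lambda pm, lower, two cubes}, I construct products explicitly. Choose the other $d-1$ indices among $n\le c/2$, where $1-\mu_n\le e^{-\kappa c}$. There are $\gtrsim c^{d-1}$ such tuples, and their product lies in $[1-de^{-\kappa c},1]$. Take the last factor $\mu_m$ in the interval $(2\eps,1/2]$, or in $(1/2,1-2\eps)$ for $\Lambda^+$. Then the product lands in $(\eps,1/2]$, respectively $(1/2,1-\eps)$, provided $de^{-\kappa c}\ll\eps$. This is where $\eps<c^{-\alpha_d}$ would enter: the one-dimensional lower bound for $\Lambda^\pm_\delta$ with the $\log(\alpha c/\log(1/\delta))$ factor presumably needs $\delta$ polynomially small, so that the $\log c$ term dominates. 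I also need $\eps\gtrsim e^{-\kappa c}$. If instead $\eps<e^{-\kappa c}$, I shrink the first block to $n\le c-C\log(1/\eps)/\kappa$, which still leaves $\gtrsim c$ indices when $\log(1/\eps)\le c/C'$. In the range $\log(1/\eps)\asymp c$, both sides are $\asymp c^d$, and I handle it by taking all factors in the plunge range. The one-dimensional counts $\asymp\log(1/\eps)\log(\alpha c/\log(1/\eps))$ then give the bound.

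For $\eps\le\alpha_d^{-c}$, first observe that $\mu_1\le 1-\alpha^{-c}$ roughly, which follows from $1-\mu_1\asymp\sqrt c e^{-2\pi c}$. Hence no product exceeds $1-\eps$ once $\alpha_d$ is large. Next, $\Lambda^-_\eps$ equals the number of tuples with $\sum\log(1/\mu_{n_i})<\log(1/\eps)=:L$, up to the $N_{1/2}$ term, which is $\lesssim c^d\ll$ the target. Write $g(n)=\log(1/\mu_n)\approx 2n\log(n/c)$ for $n\gtrsim c$. The count then lies between the cube $\{n_i: g(n_i)<L/d\}$ and the cube $\{n_i: g(n_i)<L\}$. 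Each has side $\asymp L/\log(L/c)$, because $L/d$ and $L$ give comparable values of $n$ when $L\ge c\log\alpha_d$. This yields \eqref{Lambda -, equivalence, two cubes}.

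The main obstacle I expect is tracking exactly which uniform one-dimensional estimates are available, especially the lower bound on the plunge count and the precise thresholds. These thresholds force the conditions $\eps<c^{-\alpha_d}$ and $\eps>\alpha_d^{-c}$, and choosing $\alpha_d$ large enough makes the boundary cases consistent.
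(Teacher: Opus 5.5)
Your proof is correct, but in the regime $\alpha_d^{-c}<\eps<\frac12$ it is organized differently from the paper's.

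The paper never counts index tuples. It sandwiches the $d$-dimensional counting function by \eqref{N trick} and writes $\Lambda^+_\eps=N_{1/2}-N_{1-\eps}$ and $\Lambda^-_\eps=N_\eps-N_{1/2}$. It then expands $d$-th powers of the one-dimensional counts, using:
\begin{itemize}
\item $N_{1/2}([0,c]^d,[0,1]^d)=c^d+O(c^{d-1}\log c)$ from \eqref{N 1/2};
\item $(x+y)^d=x^d+O(x^{d-1}y)$ for the upper bounds;
\item $(x-y)^d\le x^d-c_dx^{d-1}y$ and $(x+y)^d\ge x^d+c_dx^{d-1}y$ for the lower bounds.
\end{itemize}

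You count tuples directly. For the upper bound you take one factor in the one-dimensional plunge range $(\eps,1-\eps/d]$ times $(N^{(1)}_\eps)^{d-1}\lesssim c^{d-1}$. For the lower bound you take $\gtrsim c^{d-1}$ tuples of indices $n\le c/2$, where Theorem \ref{Nazarov} gives $1-\mu_n\le e^{-\kappa c}$, times the one-dimensional plunge count. That count is where $\eps<c^{-\alpha_d}$ enters, as you guessed: the $O(\log c)$ error in the count at $\frac12$ must be dominated.

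Your route bounds $\Lambda_\eps$ in one stroke. It shows that $\eps>\alpha_d^{-c}$ is needed precisely to keep $N^{(1)}_\eps\lesssim c$. It also avoids both the two-sided $d$-dimensional estimate for $N_{1/2}$ and the power inequalities. The paper's route uses one mechanism in all regimes and produces \eqref{N 1/2} as a by-product, which is reused in the proof of Theorem \ref{one term bounded}. For $\eps\le\alpha_d^{-c}$ your cube sandwich is exactly \eqref{N trick}, so there the two proofs coincide.

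Three small corrections.
\begin{itemize}
\item For $\Lambda^+$, take the last factor in, say, $(\frac34,1-2\eps)$ rather than $(\frac12,1-2\eps)$. Multiplying by the near-$1$ block can push a factor just above $\frac12$ to a product below $\frac12$.
\item The proviso $de^{-\kappa c}\ll\eps$ and the case split it triggers (shrinking the block, or putting all factors in the plunge range) are unnecessary. The product always lies in $[\mu_m(1-de^{-\kappa c}),\mu_m]$, so $de^{-\kappa c}<\frac13$ suffices for both $\Lambda^-$ and $\Lambda^+$.
\item Your first bullet misstates Theorem \ref{Nazarov}. Its exponent is $\eta\frac{c-n}{\log(2c/(c-n))}$, and it holds only for $n<c-B\log^2 c$. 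You use it only for $n\le c/2$, so this is harmless.
\end{itemize}
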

The same tensor product structure can also be employed if $A$ and $B$ are axis-parallel boxes with not necessarily equal side lengths. In this way, by the same combinatorial argument, we get the following slightly more general result.
\begin{proposition}\label{two boxes}
Let $d\geq 1$ and $I_1,\dots, I_d$ and $J_1, \dots, J_d$ be fixed closed and finite intervals. Set $A = I_1 \times \dots \times I_d$ and $B = J_1 \times \dots \times J_d$. Then the conclusion of Theorem \ref{two cubes} holds, except $\alpha_d$ now also depends on the intervals $(I_k)$, $(J_k)$.
\end{proposition}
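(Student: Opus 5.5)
The plan is to reduce to the proof of Theorem \ref{two cubes}, using the tensor structure exactly as there. Write $A=I_1\times\dots\times I_d$ and $B=J_1\times\dots\times J_d$, with $|I_k|=a_k$ and $|J_k|=b_k$. Translations of $A$ or $B$ do not change the spectrum: they amount to conjugation by a translation or a modulation. Hence
\[
S_{cA,B} = \bigotimes_{k=1}^d S_{[0,ca_k],[0,b_k]} .
\]
By the one-dimensional scaling remark, the $k$-th factor has the same eigenvalues as $S_{[0,c_k],[0,1]}$ with $c_k = c\,a_kb_k$. So the eigenvalues of $S_{cA,B}$ are exactly the products $\lambda_{n_1}(c_1)\cdots\lambda_{n_d}(c_d)$, where $\lambda_n(c_k)$ are the one-dimensional eigenvalues for parameter $c_k$. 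The cube case is the special case $c_1=\dots=c_d=c$. In the general case the parameters differ from $c$ only by the fixed factors $\kappa_k=a_kb_k$, which lie in a compact subset of $(0,\infty)$.

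Next I would recheck the combinatorial argument from Theorem \ref{two cubes} with $d$ different one-dimensional parameters in place of a single one. That argument only takes as input the uniform one-dimensional estimates (from Section 2) for the counts $N^{(1)}_\eps(c_k)$ and $\Lambda^{(1),\pm}_\eps(c_k)$. Roughly:
\begin{itemize}
\item $N^{(1)}_{1/2}(c_k)= c_k+O(1)$;
\item $\Lambda^{(1)}_\eps(c_k)\asymp \log(1/\eps)\log\bigl(\alpha c_k/\log(1/\eps)\bigr)$ in the appropriate range;
\item $\Lambda^{(1),-}_\eps(c_k)\asymp \log(1/\eps)/\log(\log(1/\eps)/c_k)$ in the superexponential range.
\end{itemize}
A product of $d$ eigenvalues lies in $(\eps,1-\eps]$ only in two ways. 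Either one factor lies in the plunge region $(\eps,1-\eps]$ and the others are at least $\eps$, or all factors exceed $1-\eps$ and the product does not. Summing over which coordinate is responsible gives bounds of the form
\[
\sum_k \Lambda^{(1)}_{\eps'}(c_k)\prod_{j\ne k} N^{(1)}_{\eps'}(c_j),
\]
with $\eps'$ comparable to $\eps$ up to powers, and matching lower bounds from explicit families of products. Every quantity enters only through these one-dimensional functions, evaluated at $c_k\asymp c$.

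The remaining step is to absorb the constants $\kappa_k$. The expression $c^{d-1}\log(1/\eps)\log\bigl(\alpha c/\log(1/\eps)\bigr)$ changes by at most a bounded factor when $c$ is replaced by $\kappa c$, provided $\alpha$ is enlarged enough that the argument of the outer logarithm stays bounded below by, say, $2$. The thresholds $\alpha_d^{-c}$ and $c^{-\alpha_d}$ must also be adjusted. The condition $\eps<(\kappa c)^{-\alpha}$ for all $k$ follows from $\eps<c^{-\alpha'}$ with $\alpha'$ larger, once $c\ge 2$ is large; finitely many small $c$ are handled by the trivial comparisons. Similarly, $\eps\le \alpha^{-\kappa c}$ for all $k$ follows from $\eps\le \alpha'^{-c}$ with $\alpha'=\alpha^{\max_k \kappa_k}$. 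The superexponential formula \eqref{Lambda -, equivalence, two cubes} is stable as well, since $\log\bigl(\log(1/\eps)/(\kappa c)\bigr)\asymp\log\bigl(\log(1/\eps)/c\bigr)$ once $\log(1/\eps)\ge c\log\alpha'$ with $\alpha'$ large.

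I expect the main obstacle to be checking that the lower bound and the superexponential asymptotics in the cube proof used only comparability of the parameters, not their equality. For instance, the lower bound may count $d$-tuples in which all but one factor is near $1$. I would verify this by going through the cube argument with $c_k$ in place of $c$. The constant $\alpha_d$ is then enlarged depending on $\min_k\kappa_k$ and $\max_k\kappa_k$, which is why it now depends on the intervals.
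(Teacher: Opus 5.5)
Your proposal is correct and follows the route the paper intends; the paper leaves this proof to the reader as ``similar'' to Theorem \ref{two cubes}. You write $S_{cA,B}$ as a tensor product of one-dimensional factors with parameters $c_k=c\,|I_k||J_k|$, rerun the combinatorial argument, and absorb the fixed ratios $|I_k||J_k|$ into $\alpha_d$. You organize the counting by the coordinate whose eigenvalue lies in the plunge region (for instance $\eps'=\eps/d$ in the upper bound, and explicit product families for the lower bounds), which is a harmless variant of the sandwich \eqref{N trick}; also, only $N_{1/2}(c_k)=c_k+O(\log c)$ from Theorem \ref{Slepian} is needed, not $c_k+O(1)$.
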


\subsection{General case}
For general sets $A,B$ the operator $S_{A,B}$ no longer splits into a tensor product, and the precision of our results will depend on the geometries of $A$ and $B$.  Specifically, we will either assume that they are disjoint finite unions of parallelepipeds, or that they are bounded and their boundaries have finite upper Minkowski content.
\begin{definition}
A set $\Gamma \subset \R^d$ is said to be of finite upper Minkowski content if
$$\limsup_{r\to 0^+} \frac{|\{x\in\R^d: {\rm{dist}} (x, \Gamma) < r\}|}{2r} < \infty.$$
\end{definition}
It is well known that for $\partial A$ regular enough, the limit $ \lim_{r\to 0^+} \frac{|\{x\in\R^d: {\rm{dist}} (x, \partial A) < r\}|}{2r}$ exactly agrees with the perimeter of $A$. Here "regular enough" includes any bounded set with Lipschitz (or $C^1$) boundary, see \cite[Theorem 2.106]{Ambrosio_00}. Note that since $2r$ goes to $0$ when $r\to 0^+$, any set of finite upper Minkowski content has zero Lebesgue measure. 

It is also not hard to see that, in general, if $\Gamma$ has finite upper Minkowski, then $\Gamma$ must must be bounded. For $d\ge 2$ the boundary of the set $A\subset\R^d$ is bounded if and only if either $A$ or $\R^d\backslash A$ is bounded; if $d = 1$ then either $A$ or $\R^d\backslash A$ is bounded, or each of them contains a ray. In either case, the only option for $A$ to have finite measure is if $A$ is bounded, so we will always assume that all of our sets are bounded.

\begin{definition}
We call a set $A$ a parallelepiped if there exists an invertible $d\times d$ matrix $M$ and a vector $b\in \R^d$ such that $A = M[0,1]^d + b$.
\end{definition}
\begin{theorem}\label{union and good}
Assume that $A$ is a bounded set whose boundary has finite upper Minkowski content and $B$ is a finite union of parallelepipeds with disjoint interiors such that both $A$ and $B$ have positive measures. There exists $\alpha =\alpha(d,A,B)\ge 4$ such that for all $c\ge 2$ and $\alpha^{-c} < \eps < \frac{1}{2}$ we have
\begin{equation}\label{eps not tiny}
\Lambda_\eps(cA, B) \lesssim c^{d-1} \log(\tfrac{1}{\eps}) \log\left(\frac{\alpha \, c}{\log\left(\tfrac{1}{\eps}\right)}\right).
\end{equation}
For $\eps \le \alpha^{-c}$ there are no eigenvalues larger than $1-\eps$ and
\begin{equation}\label{eps tiny}
\Lambda^{-}_\eps(cA, B)\asymp \left( \frac{\log(\tfrac{1}{\eps})}{\log \left(\tfrac{\log\left(\frac{1}{\eps}\right)}{c}\right)}\right)^d.
\end{equation}
\end{theorem}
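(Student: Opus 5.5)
Since the eigenvalues of $S_{A,B}$ are unchanged if we swap $A$ and $B$, and an affine change of variables only moves scalings between the two sets, I plan to reduce everything to the box case of Proposition \ref{two boxes}. This reduction uses operator inequalities and a localization decomposition.

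\emph{Step 1: one parallelepiped.} Suppose first that $B$ is a single parallelepiped $M[0,1]^d+b$. Applying the unitary change of variables $x\mapsto M^{T}x$ and a modulation, we may take $B=[0,1]^d$ and replace $A$ by $M^T A$. The new $A$ is still bounded with boundary of finite upper Minkowski content.

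\emph{Step 2: Whitney-type decomposition of $cA$.} Cover $\R^d$ by the lattice of cubes $Q$ of side $\ell$, where $\ell$ will be chosen of size comparable to $\log(1/\eps)$ (and at most $c$). Split the cubes into interior cubes $Q\subset cA$ and boundary cubes, which meet $\partial(cA)$. By the Minkowski content assumption, the boundary cubes number $\lesssim (c/\ell)^{d-1}$. Write $P_{cA}=\sum_{\text{int}}P_Q+P_{E}$, where $E$ is the union of the boundary cubes intersected with $cA$.

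The counting function $\Lambda_\eps$ behaves well under perturbations of finite rank and small norm, by Weyl-type inequalities. I would compare $S_{cA,B}$ with the block operator $\bigoplus_Q P_QQ_BP_Q$. The cross terms $P_QQ_BP_{Q'}$ have kernels decaying like $|x-y|^{-1}$ in each coordinate, because $\check 1_B$ is a product of sinc functions. Their contribution therefore has to be controlled by counting singular values above $\eps$.

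A cleaner route is to count through traces. Use $\Lambda_\eps \lesssim \log(1/\eps)\,{\rm Tr}\, g(S)$ with $g(t)=\min(t,1-t)$ suitably chosen. One then has ${\rm Tr}(S-S^2)=\int_{cA}\int_{(cA)^c}|\check 1_B(x-y)|^2$. This is bounded by $c^{d-1}\log c$ via the Minkowski content, but it only yields a bound with $\log c/\eps$. To obtain the sharp form I would therefore use Step 2 instead. The interior block cubes have the same spectrum as cubes of side $\ell$, so by Theorem \ref{two cubes} each contributes $\lesssim \ell^{d-1}\log(1/\eps)\log(\alpha\ell/\log(1/\eps))$. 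The boundary part $P_EQ_BP_E$ has $N_\eps$ bounded by the number of boundary cubes times $N_\eps$ of a single cube. Summing these two contributions, with interaction errors absorbed using exponential concentration, and choosing $\ell$ appropriately, gives \eqref{eps not tiny}. The log-log factor $\log(\alpha c/\log(1/\eps))$ comes out of a dyadic multiscale version of the decomposition: use cubes of dyadic sizes $2^k\log(1/\eps)$ at distance $\sim 2^k\log(1/\eps)$ from the boundary, so that each scale contributes $c^{d-1}\log(1/\eps)$ and there are $\log(c/\log(1/\eps))$ scales.

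\emph{Step 3: finite unions and tiny $\eps$.} For $B=\bigcup B_j$, write $Q_B=\sum Q_{B_j}$. Eigenvalues in $(\eps,1-\eps]$ of a sum of $m$ operators are controlled by Ky Fan-type subadditivity of the counting functions with $\eps$ replaced by $\eps/m$. This costs only constants, since $m$ is fixed.

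For \eqref{eps tiny}, the upper bound follows from the monotonicity $S_{cA,B}\le S_{cR,R'}$, where $R\supset A$ and $R'\supset B$ are boxes, combined with \eqref{Lambda -, equivalence, two cubes}. The lower bound follows from $S_{cA,B}\ge S_{cR_0,R_0'}$ for boxes contained in $A$ and $B$; such boxes exist because $A$ has positive measure and its boundary is null, so the interior is nonempty. The absence of eigenvalues above $1-\eps$ comes from the same upper comparison, $\lambda_1(cA,B)\le\lambda_1(cR,R')<1-\alpha^{-c}$.

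The main obstacle is controlling the interaction terms in Step 2 at the level of counting eigenvalues rather than norms. The sinc tails decay only polynomially, so I would smooth them out by conjugating with the exponentially concentrated eigenfunctions of the box operators.
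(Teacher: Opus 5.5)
Your affine reduction of a single parallelepiped to $[0,1]^d$ and your box comparisons for $\eps\le\alpha^{-c}$ agree in substance with the paper. (For the lower bound you also have to subtract $N_{1/2}(cA,B)\le 2c^d|A||B|$, which is absorbed once $\alpha$ is large.) However, the argument for \eqref{eps not tiny} has a genuine gap in Step~2.

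The block-diagonal model $\bigoplus_Q P_QQ_BP_Q$ is the wrong comparison operator. Each interior block is a full localization operator on a cube of side $\ell$, so it has its own plunge region at its artificial faces. Summing your per-cube bound over the $\asymp(c/\ell)^d$ interior cubes gives $\asymp c^d\ell^{-1}\log(\tfrac{1}{\eps})$, which is of order $c^d$ when $\ell\asymp\log(\tfrac{1}{\eps})$. These are genuine intermediate eigenvalues of the block model, not an artifact of the estimate: for $\ell$ a small multiple of $\log(\tfrac{1}{\eps})$, no eigenvalue of a block exceeds $1-\eps$, so all of its $\asymp\ell^d$ eigenvalues above $\tfrac12$ lie in the window.

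These spurious eigenvalues disappear only through the cross terms $P_QQ_BP_{Q'}$ between adjacent cubes. Those terms are neither small in norm nor of low rank: each adjacent pair has roughly as many singular values above $\eps$ as the spurious plunge region it must cancel. So no Weyl-type perturbation argument absorbs them, and ``conjugating with concentrated eigenfunctions'' is not a mechanism.

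The dyadic multiscale variant does not give the sharp bound either. With input from Theorem~\ref{two cubes}, a cube of side $\ell_k=2^k\log(\tfrac{1}{\eps})$ contributes $\ell_k^{d-1}\log(\tfrac{1}{\eps})\log(\alpha 2^k)$. Hence scale $k$ gives $c^{d-1}\log(\tfrac{1}{\eps})\,k$, not $c^{d-1}\log(\tfrac{1}{\eps})$, and summing over $k$ yields the squared logarithm of Theorem~\ref{good and good}. It also still leaves interactions among $\gg 1$ pieces, where Weyl's inequality forces you to divide $\eps$ by the number of pieces.

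What is missing is the paper's mechanism, which has three parts.
\begin{enumerate}
\item The $S-S^2$ trick: $S_{cA,B}-S_{cA,B}^2=T^*T$ with $T=P_{cA^c}Q_BP_{cA}$, so $\Lambda_\eps$ counts singular values of $T$ above $\sqrt{\eps(1-\eps)}$. The operator $T$ splits \emph{exactly} as a single sum $\sum_Q P_{cA^c}Q_BP_{cQ}$ over the Whitney cubes of Proposition~\ref{our Whitney}, plus a boundary layer. No cross terms ever arise.
\item The pieces are added with the $p$-Schatten quasi-norm, $p=1/\log(\tfrac{1}{\eps(1-\eps)})$. It is subadditive by \eqref{quasi norm subadditive} and controls the count by \eqref{schatten to count}, so arbitrarily many pieces can be added without degrading $\eps$.
\item Enlarge $cA^c$ to $c\tilde Q^c$ and use $\operatorname{dist}(Q,A^c)\ge 2\operatorname{diam}(Q)$. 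This turns each piece into $d$ tensor products, each of $d-1$ copies of $I_r$ with one copy of $J_r=P_{[-2r,2r]^c}Q_{[0,1]}P_{[-r,r]}$. The singular values of $J_r$ are $\le Ce^{-\tau n}$ \emph{uniformly in $r$} (Lemma~\ref{One dimensional operators}, from the geometric expansion of $1/(x-y)$).
\end{enumerate}
Consequently a Whitney cube of side $2^{-l}$ contributes $\lesssim p^{-1}(c2^{-l})^{d-1}$, with no second logarithm. Each dyadic scale then costs $\lesssim c^{d-1}\log(\tfrac{1}{\eps})$, and there are $\lesssim\log(\tfrac{\alpha c}{\log(1/\eps)})$ scales above the threshold $2^{-D}$. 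The layer below the threshold costs $\lesssim c^d r\lesssim c^{d-1}\log(\tfrac{1}{\eps})$.

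The same device makes Step~3 trivial: split $T$ over the $B_n$ and undo the $S-S^2$ trick for each. As you stated it, the Ky Fan argument only bounds counts \emph{above} a threshold. The lower count $N_{1-\eps}$ of the sum would therefore need a separate almost-orthogonality argument.
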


\begin{remark}
If we restrict to finite unions of axis-parallel boxes then we do not have to assume that their interiors are disjoint, as we can do a finite subdivision to make them disjoint. 
\end{remark}
\begin{remark}
 We need to assume that $A$ and $B$ have positive measures only to ensure that we have a lower bound in \eqref{eps tiny}, since if $A$ or $B$ has zero measure then $S_{A,B} = 0$ and all of the eigenvalues are zero.   
\end{remark}

\begin{theorem}\label{good and good}
Assume that $A$ and $B$ are bounded sets whose boundaries have finite upper Minkowski content and such that both $A$ and $B$ have positive measures. There exists $\alpha =\alpha(d,A,B)\ge 4$ such that for all $c \ge 2$ and  $\alpha^{-c} < \eps < \frac{1}{2}$ we have
$$\Lambda_\eps(cA, B) \lesssim c^{d-1} \log(\tfrac{1}{\eps}) \log^2\left(\frac{\alpha \, c}{\log\left(\tfrac{1}{\eps}\right)}\right).$$
For $\eps \le \alpha^{-c}$ there are no eigenvalues larger than $1-\eps$ and
$$\Lambda^{-}_\eps(cA, B)\asymp \left( \frac{\log(\tfrac{1}{\eps})}{\log \left(\tfrac{\log\left(\frac{1}{\eps}\right)}{c}\right)}\right)^d.$$
\end{theorem}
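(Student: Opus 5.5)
We reduce Theorem \ref{good and good} to Theorem \ref{union and good} by approximating $B$ from inside and outside by finite unions of cubes at a scale adapted to $c$ and $\eps$, and then comparing eigenvalues. Set $L = \log(\tfrac1\eps)$ and fix a scale $\delta \in (0,1]$, to be chosen later. Tile $\R^d$ by axis-parallel cubes of side $\delta$. Let $B^-_\delta$ be the union of cubes contained in $B$, and $B^+_\delta$ the union of cubes meeting $\overline B$. Since $\partial B$ has finite upper Minkowski content, $|B^+_\delta\setminus B^-_\delta|\lesssim \delta$. Moreover, $B^+_\delta\setminus B^-_\delta$ is covered by $\lesssim \delta^{1-d}$ cubes of side $\delta$. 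The monotonicity $Q_{B^-_\delta}\le Q_B\le Q_{B^+_\delta}$ gives $S_{A,B^-_\delta}\le S_{A,B}\le S_{A,B^+_\delta}$. Writing $\Lambda_\eps = N_\eps - N_{1-\eps}$, this yields
\[
\Lambda_\eps(cA,B)\le N_\eps(cA,B^+_\delta)-N_{1-\eps}(cA,B^-_\delta).
\]

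The right-hand side splits as
\[
\Lambda_\eps(cA,B^+_\delta)+\Lambda_\eps(cA,B^-_\delta)+\bigl(N_{1/2}(cA,B^+_\delta)-N_{1/2}(cA,B^-_\delta)\bigr).
\]
The two $\Lambda$ terms cannot be taken from Theorem \ref{union and good} as stated, because $\alpha$ there depends on $B$, hence on $\delta$. I would therefore reprove or track the constants in Theorem \ref{union and good} for unions of $\delta$-cubes. After rescaling each $\delta$-cube to the unit cube (replacing $c$ by $c\delta$), the expected bound is
\[
\Lambda_\eps(cA,B^\pm_\delta)\lesssim \#\{\text{cubes near }\partial B\}\cdot (c\delta)^{d-1} L \log\Bigl(\frac{\alpha c\delta}{L}\Bigr)+(\text{cubes near }\partial(cA)) .
\]
Using $\delta^{1-d}$ cubes times $(c\delta)^{d-1}$ gives $c^{d-1}L\log(\alpha c/L)$, and the interior cubes contribute at most the same order. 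This uniform-in-$\delta$ version of Theorem \ref{union and good}, for cubes of side $\delta\ge L/c$, is the step I expect to be the main obstacle. The interaction between many small cubes must be controlled, presumably by the same decomposition used in the proof of Theorem \ref{union and good} applied dyadically.

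The middle difference is more naturally controlled by a dyadic (Whitney) decomposition than by a single scale. Choose cubes of $B$ at scales $2^{-k}$ for $k=0,\dots,K$ with $2^{-K}\approx L/c$. The number of cubes at level $k$ is $\lesssim 2^{k(d-1)}$. Each level contributes $\lesssim c^{d-1}L\log(\alpha c 2^{-k}/L)$ plausible eigenvalues, and summing over $k$ from $0$ to $\log_2(c/L)$ gives
\[
\sum_k c^{d-1}L\log\Bigl(\frac{\alpha c 2^{-k}}{L}\Bigr)\asymp c^{d-1}L\log^2\Bigl(\frac{\alpha c}{L}\Bigr).
\]
This is exactly the extra logarithm in the statement. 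Concretely, $S_{A,B}$ is compared with the direct sum of the pieces via the quasi-orthogonal splitting $Q_B=\sum_j Q_{B_j}$ over the Whitney cubes $B_j$. The counting function of a sum of operators is handled by Ky Fan/Weyl inequalities: $\Lambda$ of a sum is bounded by the sum of the $N_{\eps'}$'s for the pieces, with $\eps'\approx\eps/K$ (only costing constants in $L$ as long as $\eps>\alpha^{-c}$). The final residual layer of width $L/c$ has measure $\lesssim L/c$, so it contributes trace $\lesssim c^d\cdot L/c=c^{d-1}L$. By Markov's inequality this gives at most $c^{d-1}L/\eps$ eigenvalues, which is too many. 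Instead, I would bound the residual via a Hilbert–Schmidt/Schatten estimate of $P_{cA}Q_{\text{layer}}$ combined with the fast decay beyond $N_{1/2}$ given by the tiny-$\eps$ regime.

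Finally, the regime $\eps\le\alpha^{-c}$ follows exactly as in Theorem \ref{union and good}. For the upper bound we sandwich $B\subset$ a cube and $A\subset$ a cube, and use monotonicity together with \eqref{Lambda -, equivalence, two cubes}. For the lower bound we use a cube inside each of $A$ and $B$ (they exist by positive measure and density points; a small inner cube suffices up to constants). The absence of eigenvalues above $1-\eps$ comes from the same outer-cube comparison, since $\lambda_1(cA,B)\le\lambda_1$ for containing cubes, which is $\le 1-\alpha_d^{-c}$ by Theorem \ref{two cubes}.
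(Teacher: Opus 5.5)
There is a genuine gap. Your heuristic for the $\log^2$ is exactly the paper's count: Whitney levels of $B$ down to side $\approx L/c$, $\lesssim 2^{k(d-1)}$ cubes per level, each worth $(c2^{-k})^{d-1}L\log(\alpha c2^{-k}/L)$. But the step you flag as the main obstacle is the entire proof, and the tools you propose for it fail.

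\emph{The uniform grid overcounts.} For a union of $\delta$-cubes, the claim that ``the interior cubes contribute at most the same order'' is false. After splitting $Q_{B^\pm_\delta}=\sum_j Q_{B_j}$ and rescaling, each of the $\asymp\delta^{-d}$ pieces $P_{cA^c}Q_{B_j}P_{cA}$ sees the whole boundary of $c\delta A$. The splitting creates interfaces between neighbouring cubes that no triangle inequality cancels. So the per-piece bound $(c\delta)^{d-1}L\log(\alpha c\delta/L)$ sums to $\delta^{-1}c^{d-1}L\log(\alpha c\delta/L)$, which is of order $c^d$ for $\delta\approx L/c$. A multiscale decomposition is forced. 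Once you have it, you can decompose $B$ itself, which makes the inner/outer sandwich and the $N_{1/2}$ difference superfluous.

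\emph{The combination step does not work.} $\Lambda_\eps$ is not subadditive over $S_{cA,B}=\sum_j S_{cA,B_j}$; the counts $N_{\eps'}(S_{cA,B_j})$ already add up to order $c^d$ from eigenvalues near $1$. You must first linearize with $S_{cA,B}-S_{cA,B}^2=T^*T$, where $T=P_{cA^c}Q_BP_{cA}=\sum_j P_{cA^c}Q_{B_j}P_{cA}$. Even then, Weyl/Ky Fan for $M$ summands needs threshold $\sqrt{\eps(1-\eps)}/M$, with $M$ the number of cubes. For $d\ge 2$ this is $M\asymp(c/L)^{d-1}$, not the number $K$ of levels. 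Grouping by level does not help, since each level is itself a sum of $2^{k(d-1)}$ pieces. This replaces $L$ by about $L+2\log M$, which is harmless only when $\eps$ is polynomially small in $c$ --- the opposite of your proviso $\eps>\alpha^{-c}$. For fixed $\eps$ and $d\ge 2$ it gives $c^{d-1}\log^3 c$. The paper notes exactly this loss for a Weyl-inequality approach.

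\emph{The missing idea.} Use the Schatten quasi-norm with $p=1/\log(1/(\eps(1-\eps)))$. By \eqref{schatten to count} the count is at most $\sqrt e\,\|T\|_p^p$, and by \eqref{quasi norm subadditive} $\|T\|_p^p\le\sum_j\|P_{cA^c}Q_{B_j}P_{cA}\|_p^p$, with no change of threshold however many pieces there are.

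This also dissolves your uniformity worry. \eqref{Shatten norm final bound} bounds $\|P_{cA^c}Q_{[0,1]^d}P_{cA}\|_p^p$ with constants depending only on $A$. For a Whitney cube of $B$ of side $2^{-l}\gtrsim L/c$, Lemma \ref{change of variables} turns its term into the same quantity with $c$ replaced by $c2^{-l}$, which is admissible since $c2^{-l}\gtrsim L$. Summing over cubes and levels gives your $c^{d-1}L\log^2(\alpha c/L)$.

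The boundary layer is covered by $\lesssim r^{1-d}$ cubes of side $r\approx L/c$. For each, enlarge $cA^c$ to $\R^d$ and $cA$ to $cR$ with $R\supseteq A$ a cube, and use $\lambda_n(U,V)=\lambda_n(V,U)$. Each term is then at most $\|P_{[0,cr]^d}Q_{[0,1]^d}\|_p^p\lesssim(cr)^d$ by the $I_r$ estimate in Lemma \ref{One dimensional operators}. The total is $c^dr\lesssim c^{d-1}L$; this is the Schatten estimate your residual-layer remark gestures at.

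\emph{The tiny-$\eps$ regime.} Your treatment of $\eps\le\alpha^{-c}$ matches the paper's, with two corrections. Inner cubes exist because $|{\rm int}\,A|=|A|>0$ (the boundary is null), not by density points. For the lower bound you still have to subtract $N_{1/2}(cA,B)\le 2c^d|A||B|$ (trace bound) and check that $N_\eps(cU_A,U_B)\ge 4c^d|A||B|$ for large $\alpha$.
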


When we pass from Theorem \ref{union and good} to Theorem \ref{good and good} we have to  put an additional square on the term $\log\left(\frac{\alpha \, c}{\log\left(\tfrac{1}{\eps}\right)}\right)$. While we believe that this effect does not actually take place, we are currently unable to remove it.
\begin{remark}
In both of these results we are also able to vary $A$ and $B$ with $c$ as long as they remain uniformly bounded, contain uniformly bounded from below balls inside of them and we have uniform bounds in the definition of finite upper Minkowski content for their boundaries for all $0 < r < 1$, say. We leave these routine generalizations to the interested reader.
\end{remark}
Arguments similar to our proof of Theorem \ref{two cubes} have already appeared in the literature, see e.g. \cite{bad_combinatorics} and \cite{Israel1}. However, both of them relied on estimates of Karnik, Romberg and Davenport \cite{Karnik_Romberg_Davenport}, which are not the strongest available bounds in all regimes in dimension $1$, see Section 2 below. In particular, in the regime $\eps < \alpha_d^c$, while we obtain at worst 
$$\Lambda_\eps^-(cA, B) \lesssim \left(\log\frac{1}{\eps}\right)^d,$$
their upper bound is $\left(\log c \log\frac{1}{\eps}\right)^d$, that is, they are losing a factor of at least $(\log c)^d$. 

For the general case, our assumption of boundaries having finite upper Minkowski content is weaker than all of the assumptions previously appearing in the literature, particularly the maximally Ahlfors regular boundary assumption in \cite{Marceca, Israel2}, see \cite[Theorem 2.104]{Ambrosio_00}. Moreover, our estimates in all regimes are at least one logarithm better than the bounds obtained in these papers, in particular being sharp in the setting of Theorem \ref{union and good}. In the proof strategy below we will highlight the key new estimates which allowed us to get this improvement.

The previous best estimate in the case of Theorem \ref{union and good} is due to Israel and  Mayeli \cite{Israel1}. They used wave packet basis to get a bound of the form
$$\Lambda_\eps(cA, B) \lesssim_\delta c^{d-1}(\log c)^{2+\delta}$$
in the regime $\eps \sim \frac{1}{c^s}$ with fixed $s > 0$ for any $\delta > 0$, thus being only $(\log c)^\delta$ away from our result. In the proof they relied on Gevrey classes, that is on the possible decay of the Fourier transform of the compactly supported functions. Thus, due to the Beurling--Malliavin theorem \cite{Beurling}, it is not possible to use their approach to match our result even in this regime. The same applies to the recent work \cite{new_one_right} compared to our Theorem \ref{good and good}.

\subsection{Proof strategy}
The first step in all of our arguments is the $S-S^2$ trick. We have $\eps < \lambda < 1-\eps$ if and only if $\eps(1-\eps)< \lambda(1-\lambda)$. For the time-frequency localization operator $S_{A,B}$ we have
$$S_{A,B}-S_{A,B}^2 = P_A Q_BQ_B P_A - P_A Q_B P_A Q_B P_A =P_A Q_B (1-P_{A}) Q_B P_A = P_AQ_BP_{A^c}Q_BP_A,$$
where $A^c$ is the complement of $A$. This latter expression is equal to $T^*T$ where $T = P_{A^c}Q_BP_A$. Therefore, we have to bound from above the number of singular values of $T$ larger than $\sqrt{\eps(1-\eps)}$.

Next, we want to partition the sets $A$ and $B$ into smaller sets and write $T$ as a sum of operators
$$T = \sum_{k, l} P_{A^c}Q_{B_l}P_{A_k}.$$

In our initial argument we applied Weyl's inequality to this sum, following the approach in \cite{Karnik_Romberg_Davenport}. However, there the authors decomposed $T$ as a sum of only $3$ operators, so it was enough for them to divide $\eps$ by $3$. In our setting the number of terms is of order $c^d$, so this approach gave sharp results only for $\eps < \frac{1}{c^d}$. Instead of applying Weyl's inequality, we will use the p-Schatten quasi-norm approach from \cite{Israel2}.

Let $K:H\to H$ be a compact operator from a Hilbert space $H$ to itself. Let $\sigma_1(K) \ge \sigma_2(K) \ge \ldots \ge 0$ be the sequence of its singular values. For $0 < p < \infty$ we define its $p$-Schatten quasi-norm by
$$\|K\|_p^p = \sum_{k=1}^\infty \sigma_k(K)^p.$$
If $K$ is not compact then we define $\|K\|_p = \infty$.

For $p = \frac{1}{2\log \frac{1}{\delta}}$ with $0<\delta<\frac{1}{2}$ we have 
\begin{equation}\label{schatten to count}
\|K\|_p^p \ge \sum_{k: \sigma_k(K)\ge \delta} \sigma_k(K)^p \ge \frac{1}{\sqrt{e}}|\{k: \sigma_k(K) \ge \delta\}|,
\end{equation}
where in the second step we used that $\delta^p = \frac{1}{\sqrt{e}}$. Thus, the number of singular values larger than $\delta$ is at most $\sqrt{e}\|K\|_p^p$. Note that for all $0 < \delta < \frac{1}{2}$ we have $0 < p < 1$. The key non-trivial fact \cite[Theorem 11.5.9]{Birman_Solomyak} about the $p$-Schatten quasi-norms for $0 < p < 1$ is that they satisfy the following subadditivity property
\begin{equation}\label{quasi norm subadditive}
\|K_1+K_2\|_p^p \le \|K_1\|_p^p + \|K_2\|_p^p.   
\end{equation}
Thus, it suffices to estimate the $p$-Schatten quasi-norms of the operators $P_{A^c}Q_{B_l}P_{A_k}$ with $p = \frac{1}{\log(\frac{1}{\eps(1-\eps)})}$.

For clarity we will describe our decomposition only when $B = [0, 1]^d$ and $A$ is a set with boundary of finite upper Minkowski content. In this case we do not partition $B$. The partition $(A_k)$ of $A$ will arise from the Whitney decomposition of the interior of $A$ into dyadic cubes. We keep all the cubes in the Whitney decomposition with side lengths larger than some threshold $2^{-D}$, and cover the remaining part by sets contained in cubes with side length $2^{-D}$. This gives $(A_k)$. From the assumption that $\partial A$ has finite upper Minkowski content we can estimate the number of cubes with side length $2^t$ by $C2^{-(d-1)t}$ for a constant $C$.

For a single term of the form $P_{U}Q_{V}P_{W}$ it is easy to see that if we enlarge $U$ or $W$ then the singular values can only increase.  For the cube $A_k$ from our partition we will enlarge $A^c$ to the complement of $(2A_k)$ which contains $A^c$ by the properties of the Whitney decomposition. In the case of cubes at the threshold we enlarge $A^c$ to the whole $\R^d$ and also enlarge $A_k$ to the corresponding cube with side length $2^{-D}$ containing it.

Splitting $(2A_k)^c$ additionally into $2^d-1$ regions, we can reduce the analysis of $P_{(2A_k)^c} Q_B P_{A_k}$ and $Q_B P_{A_k}$  to the study of tensor products of the one-dimensional operators 
$$I_r=Q_{[0,1]}P_{[0,r]}\quad \text{and}\quad J_r=P_{(-\infty, -2r]\cup[2r, +\infty)}Q_{[0,1]}P_{[-r,r]}.$$

Note that $I_r$ is just the usual time-frequency localization operator, while $J_r$ comes from the separation in the Whitney decomposition. For these operators we show the following estimates on the singular values.
\begin{lemma}\label{One dimensional operators}
There exist $\tau > 0$, $C > 0$, and $r_0 > 0$ such that for all $r > r_0$ and all $n\in\N$ we have
$$\sigma_n(I_r) \le\begin{cases}1,&  n < 10r,\\
Ce^{-\tau n}, & n \ge 10r,
\end{cases}$$
$$\sigma_n(J_r) \le Ce^{-\tau n}.$$
\end{lemma}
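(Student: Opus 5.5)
Both estimates follow from known one-dimensional facts about prolate spheroidal functions, combined with a finite-rank approximation argument for $J_r$.

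\emph{Singular values of $I_r$.} Since $I_r^*I_r = P_{[0,r]}Q_{[0,1]}P_{[0,r]} = S_{[0,r],[0,1]}$, the singular values of $I_r$ are $\sigma_n(I_r)=\sqrt{\lambda_n([0,r],[0,1])}$. These are the square roots of the eigenvalues of the one-dimensional time-frequency localization operator with time-bandwidth product $r$. The bound $\sigma_n\le 1$ is trivial, so only the tail $n\ge 10r$ needs work. For this I would invoke the known uniform one-dimensional estimates referenced in Section 2 (Bonami--Karoui, or the bounds in \cite{kulikov_2026}). They give, for $n\ge c_0 r$ with a suitable absolute constant $c_0$ (e.g.\ $n\ge er$ up to constants), a bound of the form
\[
\lambda_n \le \exp\bigl(-n\log\tfrac{n}{e r}\bigr)\quad\text{or at least}\quad \lambda_n\le Ce^{-2\tau n}.
\]
Since $n/r\ge 10$, the factor $\log(n/(er))$ is bounded below by a positive constant, which yields $\sigma_n(I_r)\le Ce^{-\tau n}$.

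If I want a self-contained route instead, I would use min-max: $\lambda_{n+1}\le \|S - S\Pi_n\|$, where $\Pi_n$ is projection onto polynomials of degree $<n$ on $[0,r]$ (weighted Legendre). Functions in the range of $Q_{[0,1]}$ are entire of exponential type $2\pi$. Taylor-expanding on $[0,r]$ around the midpoint, Bernstein's inequality gives an error of size $(\pi r)^n/n!\cdot\sqrt{r}$ relative to the $L^2$ norm. This is $\lesssim e^{-n}$ once $n\ge 10r$, by Stirling, as $(\pi e r/n)^n\le (\pi e/10)^n$ and $\pi e/10<1$.

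\emph{Singular values of $J_r$.} This is the main obstacle, because exponential decay here must hold for all $n$ uniformly in $r$, with no plateau of length $\sim r$. The separation between $[-r,r]$ and $\{|x|\ge 2r\}$ must be what kills the plateau. The plan is to approximate the operator by finite-rank operators directly. For $f\in L^2[-r,r]$ write $J_rf(x)=1_{|x|\ge 2r}\int_{-r}^r K(x-y)f(y)\,dy$, with kernel
\[
K(t)=\check 1_{[0,1]}(t)=\frac{e^{2\pi i t}-1}{2\pi i t}.
\]
The numerator is bounded with a harmless phase, and $1/t$ is analytic away from $0$. For $|x|\ge 2r$ and $|y|\le r$, expand
\[
\frac{1}{x-y}=\sum_{k\ge 0}\frac{y^k}{x^{k+1}},
\]
and note that $|y/x|\le 1/2$. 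I handle the factor $e^{2\pi i(x-y)}-1 = e^{2\pi i x}e^{-2\pi i y}-1$ by splitting it into two separable pieces, each a product of a function of $x$ and a function of $y$. Truncating the geometric series at $k<n$ then gives an operator of rank at most $2n$. The remainder kernel is bounded by $C|y|^n|x|^{-n-1}$. Its Hilbert--Schmidt norm squared is
\[
\int_{|x|\ge 2r}\int_{|y|\le r}|y|^{2n}|x|^{-2n-2}\,dy\,dx\lesssim \frac{r^{2n+1}}{(2r)^{2n+1}}\lesssim 4^{-n},
\]
which is independent of $r$. Hence $\sigma_{2n+1}(J_r)\lesssim 2^{-n}$, giving $\sigma_n(J_r)\le Ce^{-\tau n}$ with $\tau=\tfrac{\log 2}{2}$.

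Finally, I take the minimum of the constants $\tau$ from the two parts and the maximum of the constants $C$, and choose $r_0$ large enough for the one-dimensional tail estimate used for $I_r$ to apply.
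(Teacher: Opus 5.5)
Your proposal is correct. For $J_r$ it is essentially the paper's argument: the same geometric expansion of $\frac{1}{x-y}$ using $|y/x|\le\frac12$, and the same splitting of $e^{2\pi i(x-y)}-1$ into two separable pieces. The paper bounds the tail by summing the operator norms $\|f_n\|\|g_n\|$ of the rank-two pieces, while you take the Hilbert--Schmidt norm of the whole remainder; this is the same $r$-independent computation.

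For $I_r$, your first route is exactly what the paper remarks would suffice: Theorem \ref{Bonami2} with $n\ge 10r\ge 2r$ gives $\lambda_n\le e^{-\eta n\log 10}$ once $r>c_0$. Your self-contained alternative, however, differs from the paper's. The paper takes a combination $g$ of the Fourier transforms of the top $n$ eigenfunctions that vanishes at $n-1$ equally spaced points of $[-r,r]$. It combines $|g(x+iy)|\le e^{\pi|y|}$ with Jensen's formula on discs of radius $r/2$ to get $|g|\le e^{2-r}$ on $[-r/2,r/2]$. It then reduces to the case $n=10r$ by monotonicity of the eigenvalues in $r$, which yields only an exponential bound in $n$.

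Your Taylor-polynomial argument is more elementary. It holds for every $r>0$ without a separate small-$n$ case, and it gives the stronger tail $\lambda_{n+1}\le\sqrt r\,(\pi r)^n/n!$. For $n\gg r$ this has the $n\log(n/r)$ shape of Theorem \ref{Bonami2}. Neither self-contained argument gets near the sharp threshold $n\approx r$: yours needs roughly $n>\pi e r$, the paper's as written roughly $n>4r$. The lemma only needs some fixed multiple of $r$, though, and $10>\pi e$ leaves you room.

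The one step worth writing out is the passage from $L^2$ to $L^\infty$ in the derivative bound, $\|h^{(n)}\|_\infty\le(2\pi)^n\|\hat h\|_{L^1}\le(2\pi)^n\|h\|_{L^2}$. This uses that $\hat h$ is supported in $[0,1]$.
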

The estimate for $\sigma_n(I_r)$ follows from Theorem \ref{Bonami2} stated below (and is also explicitly or implicitly contained in many previous publications, see e.g. \cite{Osipov,Bonami_kk}), but we will present a self-contained complex-analytic proof inspired by \cite{kulikov_2026}.

The proof of the estimate for $\sigma_n(J_r)$ uses the precise structure of the kernel of $J_r$. Specifically, it is possible to decompose $J_r = \sum_{n=0}^\infty J_{r,n}$ into a sum of rank 2 operators $J_{r,n}$ with $\norm{J_{r,n}} \lesssim 2^{-n}$, from which the result follows. Let us remark that our initial argument was more involved and proceeded through the low displacement-rank structure and Zolotorev numbers estimates employed by Karnik, Romberg and Davenport in \cite{Karnik_Romberg_Davenport}.

The fact that the estimate for $\sigma_n(J_r)$ does not depend on $r$ is the key new ingredient in our proof compared to the argument in \cite{Israel2}. It ultimately stems from our use of the separation condition in the Whitney decomposition which was not used in the previous works.

Doing the combinatorial argument similar to the proof of Theorem \ref{two cubes}, combining all of our summands by means of \eqref{quasi norm subadditive} and carefully choosing the threshold $D$ for the Whitney decomposition we get the desired estimate for $\eps > \alpha^{-c}$. For $\eps \le \alpha^{-c}$ we can simply compare with the case of two boxes, as this is exactly the regime when the boundary term becomes proportional to the volume term.

\subsection{Area laws} We are interested in the traces ${\rm Tr} f(S_{A,B})$ for general functions $f:[0,1] \to \Cm$. For a compact self-adjoint operator $T:H\to H$ with eigenvalues $1 \ge\lambda_1\ge \lambda_2 \ge \ldots \ge 0$ and corresponding normalized eigenvectors $v_1, v_2, \ldots$ we define $f(T)(v) := \sum_{n=1}^\infty f(\lambda_n) \langle v, v_n\rangle v_n$. This operator is trace class whenever ${\rm Tr} f(T) = \sum_{n=1}^\infty f(\lambda_n)$  converges absolutely. We will always assume that $f(0) = 0$ for convenience since all operators that we consider have infinite-dimensional kernels. First, we establish results guaranteeing that $f(T)$ is trace class. 

For a function $f:[0, 1]\to \Cm$ we let $M_0 f(t) = \sup_{0 \le x \le t}|f(x)|$ and $M_1 f(t) = \sup_{1-t \le x \le 1}|f(1)-f(x)|$.
\begin{definition}
Consider a function $f:[0,1] \to \Cm$. We call $f$ trace class admissible for $L^2(\R^d)$ if there is $\delta>0$ such that
\begin{equation}\label{trace class condition}
\int_0^{\delta} \frac{M_0 f(\eps) \log\left(\frac{1}{\eps}\right)^{d-1}}{\eps\left(\log\log\left(\frac{1}{\eps}\right)\right)^d} \, d\eps < \infty. 
\end{equation}
\end{definition}
Note that for any trace class admissible for $L^2(\R^d)$ function $f$ we must necessarily have $\lim_{x\to 0^+} f(x) = f(0)=0$.

\begin{theorem}\label{trace class sufficient}
If $A, B\subseteq \R^d$ are bounded sets and $f$ is trace class admissible for $L^2(\R^d)$, then $f(S_{A,B})$ is trace class.
\end{theorem}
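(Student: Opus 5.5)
We need $\sum_n |f(\lambda_n)| < \infty$, where $\lambda_n = \lambda_n(A,B)$. Since $f(0)=0$ and only finitely many eigenvalues exceed $\delta$ ($S_{A,B}$ is compact), it suffices to show that $\sum_{n:\lambda_n\le\delta}|f(\lambda_n)|<\infty$. The idea is to reduce to the case of two cubes via monotonicity, and then control the tail of the spectrum by the counting estimate \eqref{Lambda -, equivalence, two cubes} combined with a layer-cake/summation-by-parts argument against $M_0 f$.

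\emph{Step 1 (reduction to cubes).} Since $A,B$ are bounded, $A\subseteq A' = x_0+[0,a]^d$ and $B\subseteq B'= y_0+[0,b]^d$. Then $S_{A,B} = P_A S_{A',B} P_A$, so $\lambda_n(A,B)\le \lambda_n(A',B)$. By the symmetry $\lambda_n(A',B)=\lambda_n(B,A')$ we get $\lambda_n(A',B)\le\lambda_n(A',B')$ in the same way. Translations and modulations do not change eigenvalues, and rescaling reduces to $c[0,1]^d,[0,1]^d$ with $c=\max(ab,2)$. Hence $N_\eps(A,B)\le N_\eps(c[0,1]^d,[0,1]^d)$ for all $\eps$. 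By \eqref{Lambda -, equivalence, two cubes} (and \eqref{N and Lambda}), for $\eps\le\alpha_d^{-c}$ we obtain
\[
N_\eps(A,B)\le C\,\frac{\log(1/\eps)^d}{(\log\log(1/\eps))^d}=:C\,G(\eps)
\]
after shrinking $\delta$ so that $\log(\log(1/\eps)/c)\gtrsim \log\log(1/\eps)$.

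\emph{Step 2 (summation).} Since $M_0 f$ is nondecreasing and $|f(\lambda_n)|\le M_0f(\lambda_n)$, we use the layer-cake representation. Writing $\mu$ for the counting measure of the eigenvalues in $(0,\delta]$,
\[
\sum_{\lambda_n\le\delta}M_0f(\lambda_n)=\int_{(0,\delta]}M_0f\,d\mu .
\]
We then integrate by parts against $N_\eps$, which is nonincreasing with $\mu((\eps,\delta]) \le N_\eps$. Concretely, we would discretize dyadically in $\log(1/\eps)$: on the shell $\eps\in(\delta_{k+1},\delta_k]$ with $\delta_k=e^{-2^k}$, the number of eigenvalues is at most $N_{\delta_{k+1}}\le CG(\delta_{k+1})\approx 2^{(k+1)d}/k^d$, and each contributes at most $M_0f(\delta_k)$. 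On the other hand, since $M_0 f\ge M_0 f(\delta_{k+1})$ on $(\delta_{k+1},\delta_k]$ and $d\eps/\eps$ has mass $2^k$ there, we have
\[
\int_{\delta_{k+1}}^{\delta_k}\frac{M_0f(\eps)\log(1/\eps)^{d-1}}{\eps(\log\log(1/\eps))^d}\,d\eps\gtrsim M_0f(\delta_{k+1})\,\frac{2^{kd}}{k^d}.
\]
The shell sum is therefore bounded by $\sum_k M_0f(\delta_k)2^{(k+1)d}/k^d$, which is comparable, after an index shift, to the integral \eqref{trace class condition} taken over the next shell. Hence the total is finite.

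\emph{Main obstacle.} The key point is the index mismatch between the two sides: the eigenvalues in shell $k$ are weighted by $M_0f(\delta_k)$, whereas the integral over shell $k$ only controls $M_0f(\delta_{k+1})$. We handle this by comparing shell $k$'s sum with the integral over shell $k-1$. The count $G$ at $\delta_{k+1}$ is comparable to the weight $2^{(k-1)d}/(k-1)^d$ up to a constant factor $2^{2d}$, because $G$ grows only geometrically in $k$ along this sequence. This doubly-exponential discretization is exactly what makes the constants uniform. We must also check that the regime $\eps\le \alpha_d^{-c}$ of Theorem \ref{two cubes} covers all $\eps\le\delta$; this holds after shrinking $\delta$, since $c$ is fixed.
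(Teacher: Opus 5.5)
Your proposal is correct and follows essentially the same route as the paper. Both arguments enlarge $A,B$ to cubes by monotonicity, bound $N_\eps$ via \eqref{Lambda -, equivalence, two cubes}, and split the tail into doubly-exponential shells (the paper uses $\delta 2^{-2^k}$ where you use $e^{-2^k}$). Both then resolve the index mismatch the same way: the shell-$k$ sum, weighted by $M_0 f$ at the shell's upper endpoint, is compared with the integral over the adjacent shell closer to $\delta$, where monotonicity of $M_0 f$ gives the needed lower bound.
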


\begin{theorem}\label{trace class necessary}
Let $A, B\subseteq \R^d$ be such that $S_{A,B}$ is compact and such that their interiors are non-empty. If $f:[0,1]\to\Cm$ is such that $|f(x)|$ is non-decreasing near $0$ and $f(S_{A,B})$ is trace class then $f$ is trace class admissible for $L^2(\R^d)$.
\end{theorem}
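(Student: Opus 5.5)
The plan is to reduce to a small cube pair and then use the lower bound \eqref{Lambda -, equivalence, two cubes} for the tiny-$\eps$ regime.

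\textbf{Step 1 (reduction to cubes).} Since the interiors of $A$ and $B$ are non-empty, they contain closed cubes $A_1=x_0+[0,a]^d\subset A$ and $B_1=\xi_0+[0,b]^d\subset B$.

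\textbf{Step 2 (comparison of eigenvalues).} I compare eigenvalues through the min-max principle. Consider the operators
\[
P_AQ_BP_A \quad\text{and}\quad P_{A_1}Q_{B_1}P_{A_1}.
\]
For $g$ supported in $A_1$ we have
\[
\langle Q_B g,g\rangle=\|P_B\hat g\|^2\ge\|P_{B_1}\hat g\|^2=\langle S_{A_1,B_1}g,g\rangle ,
\]
and $P_A g=g$. The min-max principle therefore gives $\lambda_n(A,B)\ge\lambda_n(A_1,B_1)$ for all $n$.

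\textbf{Step 3 (normalizing the cubes).} Translations and modulations are unitary and do not change the eigenvalues. Dilation $x\mapsto x/a$ transfers everything to $\lambda_n([0,ab]^d,[0,1]^d)$, which equals $\lambda_n(cQ,Q)$ with $Q=[0,1]^d$ and $c=ab$. If $ab<2$, I shrink one cube further or simply fix some $c$; it is only a fixed constant, and Theorem \ref{two cubes} applies with $c\ge2$. If needed I enlarge $a$ by choosing a subcube. In fact any fixed $c$ works, so I can shrink $b$ to make $c=2$ exactly, which is allowed since $\lambda_n$ is monotone under shrinking.

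\textbf{Step 4 (lower bound on the counting function).} By \eqref{Lambda -, equivalence, two cubes} with this fixed $c$, for $\eps\le\alpha_d^{-c}$,
\[
N_\eps(A,B)\ \ge\ \Lambda^-_\eps(cQ,Q)\ \gtrsim\ \frac{L^d}{(\log L)^d}, \qquad L=\log\tfrac1\eps ,
\]
where the constants absorb $c$. The same bound holds for $\eps$ small enough, since $\log(L/c)\asymp\log L$.

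\textbf{Step 5 (from trace class to admissibility).} Assume $|f|$ is non-decreasing on $[0,\delta_0]$ and $\sum|f(\lambda_n)|<\infty$. Choose $\delta<\min(\delta_0,\alpha_d^{-c})$ small. Since $f(S_{A,B})$ is trace class,
\[
\infty>\sum_n|f(\lambda_n)|\ \ge\ \sum_{n:\lambda_n\le\delta}|f(\lambda_n)|
=\int_0^{\delta}|f(\eps)|\,d\bigl(-\#\{n:\eps<\lambda_n\le\delta\}\bigr)
\]
in the Stieltjes sense. Rather than integrating by parts, I use a dyadic argument in $L$.

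Set $\eps_k=e^{-2^k}$. Monotonicity gives $|f(\lambda)|\ge|f(\eps_{k+1})|$ for $\lambda\in(\eps_{k+1},\eps_k]$. The number of such eigenvalues is $N_{\eps_{k+1}}-N_{\eps_k}$. I need a lower bound on this difference of order $2^{kd}/k^d$. This requires an upper bound $N_{\eps_k}\lesssim 2^{kd}/k^d$ with a smaller constant, which is not directly available.

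So I instead sum by parts:
\[
\sum_{\lambda_n\le\delta}|f(\lambda_n)|\ \ge\ \sum_k\bigl(|f(\eps_k)|-|f(\eps_{k+1})|\bigr)\,\#\{n:\eps_{k+1}<\lambda_n\le\delta\},
\]
which uses only the lower bound $N_{\eps_{k+1}}-N_\delta\gtrsim 2^{kd}/k^d$. Reversing the summation by parts with the weights $w_k=2^{kd}/k^d$ yields
\[
\sum_k|f(\eps_k)|\,(w_k-w_{k-1})\lesssim\sum_k|f(\eps_k)|\,w_k \quad\text{(up to boundary terms)}.
\]
Here $w_k-w_{k-1}\asymp w_k$, and the boundary term is non-negative. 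This gives $\sum_k M_0f(\eps_k)\,2^{kd}k^{-d}<\infty$, using $M_0f=|f|$ near $0$ by monotonicity.

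\textbf{Step 6 (comparison with the integral).} Finally I compare the sum with the integral \eqref{trace class condition}. Substituting $\eps=e^{-L}$ gives $d\eps/\eps=dL$, so the integral becomes
\[
\int M_0f(e^{-L})\,\frac{L^{d-1}}{(\log L)^d}\,dL .
\]
On $[2^k,2^{k+1}]$ the weight integrates to about $2^{kd}/k^d$, and $M_0f(e^{-L})\le M_0f(\eps_k)$ there. The integral is therefore dominated by the convergent sum.

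\textbf{Main obstacle.} The delicate step is the summation by parts. It must use only a lower bound on counting functions together with the monotonicity of $|f|$, and the boundary term $|f(\eps_K)|\,N_{\eps_K}$ must be handled. That term is non-negative and can simply be dropped, so this step should go through.
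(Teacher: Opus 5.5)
Your route differs from the paper's and can be made to work, but the summation-by-parts step, which you yourself single out as the main obstacle, does not close as written. Write $F_k=|f(\eps_k)|$, which is non-increasing in $k$, and $w_k=2^{kd}/k^d$.

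First, your first display is off by one. An eigenvalue $\lambda\in(\eps_{j+1},\eps_j]$ receives weight $\sum_{k\ge j}(F_k-F_{k+1})$ on the right, and this can exceed $|f(\lambda)|$: let $|f|$ vanish on $[0,\lambda]$ and equal $1$ at $\eps_j$. The count must be $\#\{n:\eps_k<\lambda_n\le\delta\}$, which gives weight $\sum_{k\ge j+1}(F_k-F_{k+1})\le F_{j+1}\le|f(\lambda)|$.

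More seriously, start from $\sum_n|f(\lambda_n)|\gtrsim\sum_k(F_k-F_{k+1})w_k$. The reverse Abel identity is
\[
\sum_{k=a+1}^{K}F_k\,(w_k-w_{k-1})=\sum_{k=a}^{K}(F_k-F_{k+1})\,w_k-F_aw_a+F_{K+1}w_K ,
\]
so the boundary term $F_{K+1}w_K\ge 0$ sits on the side you must bound from above. Its non-negativity is exactly the wrong property: it cannot be dropped, and bounding $|f(\eps_K)|\,w_K$ is essentially what you are trying to prove. Also, the displayed inequality $\sum_kF_k(w_k-w_{k-1})\lesssim\sum_kF_kw_k$ is the trivial direction and is not what is needed.

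The repair is short; either of two fixes works.
\begin{itemize}
\item Truncate the trace at $\eps_K$ and \emph{keep} the term $F_K\,\#\{n:\eps_K<\lambda_n\le\delta\}\gtrsim F_Kw_K$ that the summation by parts produces on the lower-bound side. Then
$\sum_{k=a}^{K-1}(F_k-F_{k+1})w_k+F_Kw_K=F_aw_a+\sum_{k=a+1}^{K}F_k(w_k-w_{k-1})$
holds exactly, uniformly in $K$.
\item Observe that trace class forces $\lim_{x\to0^+}|f(x)|=0$, since infinitely many $\lambda_n$ are positive. Then $F_Kw_K\le\sum_{k\ge K}(F_k-F_{k+1})w_k\to0$.
\end{itemize}

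Once repaired, your argument is a genuine alternative to the paper's. The paper first transfers the sum to the cube eigenvalues $\lambda_n(U_A,U_B)$ using monotonicity of $|f|$. It then uses both sides of \eqref{Lambda -, equivalence, two cubes} on the sparse scale $\eps=2^{-K^k}$, with $K$ so large that $\Lambda^-$ at least doubles between consecutive scales. So the upper bound you thought unavailable is in fact there, and the mismatch of constants is absorbed by $K$. This bounds consecutive differences of counting functions directly, with no boundary bookkeeping. Your Abel summation needs only a lower bound on $N_\eps(A,B)$ along a scale where it grows geometrically, which is more robust.

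Finally, Step 3 is muddled: shrinking a cube decreases $ab$, so you cannot reach $c=2$ from $ab<2$. What you actually need is the lower bound in \eqref{Lambda -, equivalence, two cubes} for an arbitrary fixed $c>0$. This is a classical fact about prolate eigenvalues, and the paper also uses it tacitly.
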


We now turn to our main application of the uniform bounds from Theorem \ref{two cubes} and Theorem \ref{union and good}, namely to compute a two-term asymptotic formula for $\operatorname{Tr} f(S_{cA,B})$ in the limit $c\to \infty$ for extremely general functions $f$. As an input we use the corresponding formula for polynomials $f$. For $d=1$ this is supplied by the work of Landau and Widom \cite{widom_landau}. The higher dimensional case is considerably more difficult. The formula was conjectured by Widom \cite{Widom_conj} and much later established by Sobolev \cite{Sobolev13} for regular enough sets $A$ and $B$. For the context of the present paper, we rely on the sharp generalization of Sobolev's result from the recent work \cite{EE_paper} by Fournais, Seiringer, Solovej, and the second author. It was established that, for polynomials $f$ and sets $A$, $B$ with finite measure and finite perimeter,
\begin{equation}\label{polynomial case}
\begin{aligned}
\MoveEqLeft \operatorname{Tr} f(S_{cA,B})\\
&= c^{d} |A| |B|f(1) +  c^{d-1}\log(c)I( A, B)\int_{0}^1 \frac{f(\theta) - f(1)\theta}{\theta(1-\theta)} \, d\theta + o(c^{d-1}\log(c)),
\end{aligned}
\end{equation}
as $c\to\infty$, for a certain boundary coefficient $I(A,B)$. If $A$ and $B$ are sets with $C^1$ boundaries, then 
\[
I(A,B) = \frac{1}{4\pi^2} \int_{\partial A} \int_{\partial B} |\nu_A(x) \cdot \nu_B(p)| \, d\mathcal{H}^{d-1}(p) \, d\mathcal{H}^{d-1}(x),
\]
where $\nu_A$ and $\nu_B$ denote the interior normals to $A$ and $B$, respectively, and $d\mathcal{H}^{d-1}$ is the $d-1$-dimensional Hausdorff measure. Tools from geometric measure theory are necessary to describe the coefficient in the finite perimeter setting. We refer to \cite{EE_paper} for the details.  

For our work it is important to note that if the boundary of a set has finite upper Minkowksi content then this set has finite perimeter, thus \eqref{polynomial case} applies in our case. This is so because any set of finite upper Minkowski content has finite $d-1$-dimensional Hausdorff measure (see e.g. Lemma \ref{shell size boundary} below) and the fact that if the boundary of a set has finite $d-1$-dimensional Hausdorff measure then it has finite perimeter \cite[Proposition 3.62]{Ambrosio_00}. Note that even in this case the factor $I(A,B)$ can only be defined with the help of geometric measure theory.

Since Sobolev's original result, significant effort has been put to extend the two-term asymptotic formula \eqref{polynomial case} from polynomials to rougher spectral functions \cite{Sobolev_quasi_est,Sobolev_quasi_abstract}. This is in part motivated from physics. Here it was realized by Gioev and Klich \cite{Gioev_Klich} that the trace ${\rm Tr}f(S_{cA,B})$, with $f(\theta) = -\theta \log(\theta) - (1-\theta)\log(1-\theta)$ or related Rényi entropy functions, appears directly in the expression for the bipartite entanglement entropy for free Fermionic systems in the ground state. In this context, the domain $B$ represents the Fermi sea of the Fermionic operator and $A$ represents the spatial subsystem. Since the entropy functions satisfy $f(1) = 0$, the formula \eqref{polynomial case} expresses that $\operatorname{Tr} f(S_{cA,B}) \sim c^{d-1}\log(c)$, which is referred to as an enhanced area law in the physics literature. See \cite{Eisert_review} for relevant background. 

Previously, Sobolev carried out the extension argument relying on uniform Schatten quasi-norm estimates of the involved operators \cite{Sobolev_quasi_abstract}, which allowed $f$ with Hölder-type singularities. With the much more precise spectral bounds from Theorem \ref{two cubes} and Theorem \ref{union and good}, we are able to push this much further, in particular obtaining essentially necessary and sufficient conditions on the function $f$.

\begin{definition}
Consider a function $f:[0,1] \to \Cm$. We call $f$ area law admissible if
\begin{equation}\label{area law condition}
\int_{0}^1 \frac{M_0 f(\eps) + M_1 f(\eps)}{\eps} \, d\eps < \infty.
\end{equation}
\end{definition}

\begin{theorem}\label{good area law}
Let $A\subseteq \R^d$ be a bounded set whose boundary has finite upper Minkowski content and let $B$ be a finite union of parallelepipeds with disjoint interiors. Assume that $f$ is both  area law admissible and trace class admissible for $L^2(\R^d)$. There exist $c_0(f, A, B)$ and $C(A, B)$ such that for $c > c_0(f, A, B)$ we have
\begin{equation}\label{area law uniform bound}
|\operatorname{Tr} f(S_{cA,B})- c^d |A||B| f(1)|\leq C(A,B) c^{d-1}\log(c) \int_0^1 \frac{M_0 f(\eps) + M_1 f(\eps)}{\eps} \, d\eps.
\end{equation}

If in addition $f$ is Riemann integrable on $[\eps, 1-\eps]$ for all $0 < \eps < \frac{1}{2}$ then we also have
\begin{equation}\label{area law two term}
\begin{aligned}
\MoveEqLeft \operatorname{Tr} f(S_{cA,B})\\
&= c^{d} |A| |B|f(1) +  c^{d-1}\log(c)I( A, B)\int_{0}^1 \frac{f(\theta) - f(1)\theta}{\theta(1-\theta)} \, d\theta + o(c^{d-1}\log(c)),
\end{aligned}
\end{equation}
as $c\to \infty$.
\end{theorem}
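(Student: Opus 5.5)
We prove the uniform bound \eqref{area law uniform bound} by a dyadic decomposition of the spectrum, controlling each dyadic piece through the counting functions of Theorem \ref{union and good}. The two-term formula \eqref{area law two term} then follows by approximating $f$ by polynomials and using \eqref{polynomial case}.

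\textbf{Step 1: splitting the trace.} Write $\lambda_n = \lambda_n(cA,B)$ and
\[
\operatorname{Tr} f(S_{cA,B}) - N_{1/2}\,f(1) = \sum_{\lambda_n > 1/2} \bigl(f(\lambda_n)-f(1)\bigr) + \sum_{\lambda_n\le 1/2} f(\lambda_n).
\]
First we compare $N_{1/2}$ with $c^d|A||B|$. The trace identity gives $\operatorname{Tr} S = c^d|A||B|$. Hence
\[
|N_{1/2} - c^d|A||B|| \le \sum_{\lambda_n > 1/2}(1-\lambda_n) + \sum_{\lambda_n \le 1/2}\lambda_n \lesssim \operatorname{Tr}(S-S^2) \lesssim c^{d-1}\log c,
\]
using the polynomial case \eqref{polynomial case} for $\theta-\theta^2$. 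The error term $f(1)\,(N_{1/2}-c^d|A||B|)$ is then admissible, since $|f(1)|\le M_0f(t)+M_1f(t)$ for, say, $t = 1/2$. Integrating this over $t\in[1/2,1]$ against $dt/t$ bounds $|f(1)|$ by a constant times the integral in \eqref{area law uniform bound}.

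\textbf{Step 2: the region $\lambda_n\le 1/2$, split at $\alpha^{-c}$.}
\begin{itemize}
\item Group eigenvalues in shells $\lambda_n \in (\eps_{k+1},\eps_k]$ with $\eps_k = 2^{-k}$. By monotonicity, $|f(\lambda_n)| \le M_0 f(\eps_k)$. The count in each shell is at most $\Lambda^-_{\eps_{k+1}}$.
\item For $\eps_k > \alpha^{-c}$, \eqref{eps not tiny} gives $\Lambda^-_{\eps_{k+1}} \lesssim c^{d-1}\, k\log(\alpha c/k)$, and $k\log(\alpha c/k) \lesssim k\log c$.
\item Because of this growth in $k$, we must not sum $\Lambda^-_{\eps_{k+1}} M_0f(\eps_k)$ directly; that would produce a weight $\log(1/\eps)$. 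Instead we use summation by parts:
\[
\sum_k \Lambda^-_{\eps_{k+1}}\bigl(M_0f(\eps_k)-M_0f(\eps_{k+1})\bigr) + \text{boundary}.
\]
This still gives $\sum_k k\log c\,\bigl(M_0f(\eps_k) - M_0f(\eps_{k+1})\bigr) \approx \sum_k M_0 f(\eps_k)\log c$.
\item That last sum is comparable to $\log c\int_0^1 M_0f(\eps)\,d\eps/\eps$, which is the required bound.
\item For $\eps_k \le \alpha^{-c}$, i.e. $k \gtrsim c$, we use \eqref{eps tiny}: $\Lambda^- \lesssim (k/\log(k/c))^d$. Summation by parts gives $\sum_k (k/\log(k/c))^{d-1}\, M_0f(2^{-k})/\log(k/c)$.
\item We bound this tail crudely by the trace-class condition \eqref{trace class condition}. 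That condition has weight $k^{d-1}/(\log k)^d$, which dominates our weight for $k$ large; the range $c\lesssim k\lesssim c^2$ needs care. The tail is then $o(1)$, in particular $\le c^{d-1}\log c$ for $c>c_0(f)$.
\end{itemize}
The region $\lambda_n > 1/2$ is handled the same way with $M_1f$ and $\Lambda^+$. Only the $\eps>\alpha^{-c}$ range occurs, since there are no eigenvalues above $1-\alpha^{-c}$.

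\textbf{Step 3: the two-term formula.} Assume $f$ is additionally Riemann integrable on each $[\eps,1-\eps]$. Fix $\delta>0$ and split $f = f_1+f_2$:
\begin{itemize}
\item $f_2$ is supported near $0$ and $1$, with $f_2(1)=0$ after subtracting $f(1)\theta$. Its area-law integral is small, so Step 1–2 bounds its contribution by $C\delta\, c^{d-1}\log c$.
\item $f_1$ is Riemann integrable on $[\delta,1-\delta]$ and vanishes near the endpoints.
\end{itemize}
For $f_1$ we choose polynomials $p_\pm$ sandwiching $f_1$ on $[0,1]$, in the form
\[
|f_1 - p| \le q,\qquad q(\theta)=\theta(1-\theta)r(\theta),\quad r \text{ a small polynomial},
\]
with $\int q/(\theta(1-\theta))$ small. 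This is Weierstrass applied to step functions, which is where Riemann integrability enters. Since $0\le q$ and $\operatorname{Tr} q(S)$ is given by \eqref{polynomial case}, the contribution of $f_1-p$ is small. Letting $\delta\to 0$ yields \eqref{area law two term}.

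\textbf{Main obstacle.} The hardest step is Step 2's summation by parts in the intermediate regime. The factor $\log(1/\eps)$ in \eqref{eps not tiny} must be absorbed by the differences of the monotone majorant $M_0f$, so that the weight $\log(1/\eps)\,d\eps/\eps$ reduces to $d\eps/\eps$. Matching the $\log(\alpha c/\log(1/\eps))$ factor to exactly $\log c$ requires bounding it by $\log(\alpha c)$. The glue between the regimes at $\eps\approx\alpha^{-c}$ must also be checked.
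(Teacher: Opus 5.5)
Your proposal is correct. It follows the same overall strategy as the paper (bound the spectral sums with the counting estimates of Theorem \ref{union and good}, then approximate by polynomials), but the bookkeeping differs in three places.

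First, in the intermediate regime the paper uses doubly exponential shells $2^{-2^{k+1}}<\lambda\le 2^{-2^k}$ (and likewise for $1-\lambda$). It bounds the number of eigenvalues in a shell by the cumulative count $\Lambda_{2^{-2^{k+1}}}(cA,B)\lesssim c^{d-1}2^k\log c$. Nothing is lost, because the adjacent interval $[2^{-2^k},2^{-2^{k-1}}]$ has $d\eps/\eps$-length $\asymp 2^k$, and $M_0f\ge M_0f(2^{-2^k})$ there. You use geometric shells $2^{-k}$, where that crude count would cost a factor $\log(1/\eps)$, and you recover it by Abel summation against the monotone majorant. Both arguments rest on the same fact: the bound \eqref{eps not tiny} is linear in $\log(1/\eps)$, up to the harmless factor $\log(\alpha c)$. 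The paper's grouping simply avoids the boundary terms and the monotonicity checks on the upper bounds that your summation by parts needs in both regimes.

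Second, you handle $f(1)$ by comparing $N_{1/2}$ with $c^d|A||B|$ via $\operatorname{Tr}(S-S^2)\lesssim c^{d-1}\log c$. The paper instead replaces $f$ by $f-\theta f(1)$, which needs only the exact identity $\operatorname{Tr}S_{cA,B}=c^d|A||B|$.

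Third, for the two-term formula, sandwiching $f_1/(\theta(1-\theta))$ between polynomials is a compressed version of the paper's chain (continuous functions, then indicators, then Riemann sums). Your claim that the area-law integral of $f_2$ is small needs $(M_0f(\delta)+M_1f(\delta))\log(1/\delta)\to 0$. This follows from area-law admissibility, as in the paper's dominated-convergence step.

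One inaccuracy: your bound for the tail below $\alpha^{-c}$ is not $o(1)$ in general. For $c\lesssim k\lesssim c^2$, your weight $k^{d-1}/\log(k/c)^d$ exceeds the trace-class weight $k^{d-1}/(\log k)^d$ by up to a factor $(\log c)^d$. For instance, with $d=2$ and $M_0f(2^{-k})=k^{-2}$, your bound is of order one. What you actually get is $(\log c)^d$ times the tail of the integral in \eqref{trace class condition} starting at $\eps\approx\alpha^{-c}$, exactly as in the paper. This suffices, but the comparison must be with $c^{d-1}\log c\int_0^1(M_0f+M_1f)\,d\eps/\eps$, not with $c^{d-1}\log c$. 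Choose $c_0(f,A,B)$ so that the vanishing tail integral is at most $\int_0^1(M_0f+M_1f)\,d\eps/\eps$, and use $(\log c)^d\le c^{d-1}\log c$. Otherwise the constant $C(A,B)$ would depend on $f$.
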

\begin{remark}
It is crucial for us that $C(A,B)$ does not depend on $f$. On the other hand, $c_0(f, A, B)$ can and will depend on $f$, specifically on the integral in the definition of trace class admissibility for $L^2(\R^d)$.
\end{remark}


For $d = 1$ every $f$ which is area law admissible is automatically trace class admissible for $L^2(\R^d)$. On the other hand, for every $d\ge 2$ there exists a function which is area law admissible but which is not trace class admissible for $L^2(\R^d)$ and which is monotone near $0$. Thus, by Theorem \ref{trace class necessary} for any sets $A, B\subset \R^d$ with non-empty interiors we have ${\rm Tr} f(S_{A,B}) = \infty$ even though $f$ is area law admissible.

\begin{example}
Let $f(\theta) = \frac{1}{\log(\frac{2}{\theta})^{3/2}}$. Then $f$ is monotone increasing on $[0, 1]$, it is area law admissible but it is not trace class admissible for $L^2(\R^d)$ for any $d\ge 2$.
\end{example}
For general sets $A, B$ our argument gives an upper bound with one extra logarithm.
\begin{theorem}\label{bad area law}
Let $A, B\subseteq \R^d$ be bounded sets whose boundaries have finite upper Minkowski content. Assume that $f$ is both  area law admissible and trace class admissible for $L^2(\R^d)$. There exist $c_0(f, A, B)$ and $C(A, B)$ such that for $c > c_0(f, A, B)$ we have
\begin{equation*}
|\operatorname{Tr} f(S_{cA,B})- c^d |A||B| f(1)|\leq C(A,B) c^{d-1}\log(c)^2 \int_0^1 \frac{M_0 f(\eps) + M_1 f(\eps)}{\eps} \, d\eps.
\end{equation*}
\end{theorem}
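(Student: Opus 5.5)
We prove Theorem \ref{bad area law} along the same lines as the uniform bound \eqref{area law uniform bound} in Theorem \ref{good area law}, with the counting estimate of Theorem \ref{union and good} replaced by Theorem \ref{good and good}. The proof has three parts: an algebraic splitting of the trace, a dyadic layer-cake estimate on the plunge region, and separate handling of the extremely small eigenvalues.

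\textbf{Step 1: splitting the trace.} Write
\[
\operatorname{Tr} f(S) - f(1)\operatorname{Tr}S = \sum_n \bigl(f(\lambda_n) - f(1)\lambda_n\bigr),
\]
where $S = S_{cA,B}$ and $\operatorname{Tr} S = c^d|A||B|$. The quantity we must bound is therefore this sum, which we split over eigenvalues with $\lambda_n \le \frac12$ and with $\lambda_n > \frac12$.
\begin{itemize}
\item For $\lambda_n \le \frac12$ we use $|f(\lambda_n)| \le M_0 f(\lambda_n)$. The term $f(1)\lambda_n$ is absorbed as follows. We have $|f(1)| \le M_0f(1/2)+M_1f(1/2)$, and the integral in \eqref{area law condition} controls $M_0f(1/2)+M_1f(1/2)$, since $M_0f$ and $M_1f$ are monotone and
\[
\int_{1/2}^1 \frac{d\eps}{\eps} \asymp 1 .
\]
Also $\sum_{\lambda_n\le 1/2}\lambda_n \lesssim \sum_n \lambda_n(1-\lambda_n)$, and $\sum_n \lambda_n(1-\lambda_n) = \operatorname{Tr}(S-S^2) \lesssim c^{d-1}\log c$ by \eqref{polynomial case} applied to $f(\theta)=\theta-\theta^2$ (this also follows from the counting bound below).
\item For $\lambda_n > \frac12$ we write
\[
f(\lambda_n) - f(1)\lambda_n = \bigl(f(\lambda_n)-f(1)\bigr) + f(1)(1-\lambda_n),
\]
where $|f(\lambda_n)-f(1)| \le M_1f(1-\lambda_n)$ and the second term is handled as in the previous case.
\end{itemize}

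\textbf{Step 2: the plunge region.} Group the eigenvalues dyadically, with $\lambda_n\in(2^{-k-1},2^{-k}]$ (or $1-\lambda_n$ in that range). The number of such eigenvalues is at most $\Lambda^-_{2^{-k-1}}$, respectively $\Lambda^+_{2^{-k-1}}$. For $2^{-k} > \alpha^{-c}$, Theorem \ref{good and good} gives
\[
\Lambda_{2^{-k-1}} \lesssim c^{d-1}\, k \log^2(\alpha c/k) \le c^{d-1}\, k \log^2(\alpha c).
\]
Summation by parts (Abel summation) of $\sum_k M_0f(2^{-k})\,(\Lambda_{2^{-k-1}}-\Lambda_{2^{-k}})$ against the increments of $\Lambda$ is the wrong way round; it is simpler to bound the sum directly. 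Since $M_0f(2^{-k})$ is monotone in $k$, we bound $\sum_k M_0f(2^{-k})\,\#\{\text{layer }k\}$. This calls for control of the individual layer counts rather than the cumulative ones. We get it by applying the $S-S^2$ trick differently: the layer-$k$ count is at most $\Lambda_{2^{-k-1}}$, but this alone loses a factor of $k$.

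To avoid that loss, we instead use the $p$-Schatten quasi-norm bound obtained in the proof of Theorem \ref{good and good}, namely
\[
\|T\|_p^p \lesssim c^{d-1}\log(1/\eps)\log^2(\alpha c)
\]
with $p \approx 1/\log(1/\eps)$. From this we derive a per-layer count $\lesssim c^{d-1}\log^2(\alpha c)$ for layers with $k \lesssim c$. An alternative route, if the per-layer count is unavailable, is to apply Abel summation to
\[
\sum_k \bigl(M_0f(2^{-k}) - M_0f(2^{-k-1})\bigr)\,\Lambda_{2^{-k-1}} .
\]
This yields terms $c^{d-1}\log^2 c\,\cdot k\bigl(M_0f(2^{-k})-M_0f(2^{-k-1})\bigr)$, whose sum is $\lesssim \sum_k M_0f(2^{-k})$ plus boundary terms, and
\[
\sum_k M_0f(2^{-k}) \asymp \int_0^1 \frac{M_0f(\eps)}{\eps}\,d\eps .
\]
The summation runs only up to $k\approx c\log\alpha$, and it produces the stated bound $C(A,B)\,c^{d-1}\log^2 c\int_0^1 \frac{M_0f(\eps)+M_1f(\eps)}{\eps}\,d\eps$. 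In fact the $k$-weights in the Abel summation telescope exactly, so this alternative works and is the route we take.

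\textbf{Step 3: tiny eigenvalues.} For $\eps\le\alpha^{-c}$ there are no eigenvalues in $(1-\eps,1)$. For the small eigenvalues we use the second estimate of Theorem \ref{good and good},
\[
\Lambda^-_\eps \asymp \left(\frac{\log(1/\eps)}{\log\left(\log(1/\eps)/c\right)}\right)^d .
\]
With this bound, the sum $\sum_{\lambda_n\le\alpha^{-c}}|f(\lambda_n)|$ is controlled by the admissibility integral \eqref{trace class condition} restricted to $(0,\alpha^{-c})$. This part is the main obstacle, since it is where the dependence on $c$ has to be made to vanish. Substituting $\eps = e^{-s}$, the density of eigenvalues is about $s^{d-1}/(\log(s/c))^d$. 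For $s\ge c^2$ this is $\lesssim s^{d-1}/(\log s)^d$, so the tail tends to zero as $c\to\infty$ by \eqref{trace class condition}; this is the source of $c_0(f,A,B)$. The intermediate range $c\log\alpha\le s\le c^2$ contributes
\[
\lesssim c^{d-1}\int \frac{M_0f(e^{-s})}{\log^d(s/c)}\,ds \lesssim c^{d-1}\log c\int_0^1\frac{M_0f(\eps)}{\eps}\,d\eps,
\]
after Abel summation over dyadic ranges of $s$. This is within the claimed bound.
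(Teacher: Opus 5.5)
\textbf{The gap is in Step 3, in the range $c\log\alpha\le s\le c^2$.} On this range the eigenvalue density from \eqref{eps tiny} is $s^{d-1}/\log^d(s/c)$, and you replaced $s^{d-1}$ by $c^{d-1}$. Since $s$ runs up to $c^2$, the factor $s^{d-1}$ can be as large as $c^{2(d-1)}$. So for $d\ge 2$ your bound $\lesssim c^{d-1}\int M_0f(e^{-s})\log^{-d}(s/c)\,ds$ does not follow.

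Nor can this range be controlled by the area-law integral alone. Monotonicity gives only $M_0f(e^{-s})\le s^{-1}\int_0^1\frac{M_0f(\eps)}{\eps}\,d\eps$. That leaves a factor $\int_c^{c^2}s^{d-2}\log^{-d}(s/c)\,ds\approx c^{2(d-1)}/\log^d c$, which is far larger than $c^{d-1}\log^2 c$.

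What is needed, and what the paper does, is to use trace class admissibility on the \emph{whole} range $s\ge c\log\alpha$, not only on $s\ge c^2$. For $s\ge c\log\alpha$ with $\alpha\ge 4$ and $c\ge 2$ one has $\log s\le C(\alpha)\log(c)\log(s/c)$. Hence the density is $\lesssim \log(c)^d\,s^{d-1}/\log^d s$, and dyadic grouping in $s$ gives
\[
\sum_{\lambda_n(cA,B)<\alpha^{-c}}|f(\lambda_n(cA,B))|\lesssim \log(c)^d\int_0^{\alpha^{-c/4}}\frac{M_0f(\eps)\log(\frac{1}{\eps})^{d-1}}{\eps\left(\log\log(\frac{1}{\eps})\right)^d}\,d\eps .
\]
This tail integral tends to $0$ as $c\to\infty$. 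So for $c>c_0(f,A,B)$ it is at most $\int_0^1\frac{M_0f(\eps)+M_1f(\eps)}{\eps}\,d\eps$, and $\log(c)^d\le c^{d-1}\log c$ finishes the estimate. This is exactly where $c_0$ must depend on $f$, and with this argument your split at $s=c^2$ becomes unnecessary.

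\textbf{Steps 1 and 2 are fine.} Your handling of $f(1)$ via $\operatorname{Tr}(S-S^2)$ is equivalent to the paper's reduction to $f(1)=0$.

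Step 2 is a legitimate variant of the paper's bookkeeping. The paper groups eigenvalues in doubly exponential layers $(2^{-2^{k+1}},2^{-2^k}]$. There, bounding each layer by the cumulative count $\Lambda_{2^{-2^{k+1}}}(cA,B)\lesssim c^{d-1}2^k\log^2 c$ costs only a constant, because $\int_{2^{-2^k}}^{2^{-2^{k-1}}}\frac{d\eps}{\eps}\asymp 2^k$. Your single-exponential layers with Abel summation achieve the same thing. For non-increasing $a_k=M_0f(2^{-k})$ and cumulative counts $N_k\le Ck$, one has exactly $\sum_{k\le K}a_k(N_k-N_{k-1})\le C\sum_{k\le K}a_k$.

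You should, however, drop the detour claiming a per-layer count from the $p$-Schatten bound. A single quasi-norm with $p\approx 1/\log(1/\eps)$ controls only the cumulative count $\Lambda_\eps$, and you do not need anything more.
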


\subsection{Lower order terms for $\operatorname{Tr} S^2$}
If both $A$ and $B$ are finite unions of axis-parallel boxes then we are able to go further in the expansion in the simplest non-trivial case $f(\theta) = \theta^2$. We carry out this computation in part out of mathematical curiosity, 
but also because there seems to be some interest from a physics point of view for further terms in the expansion of $\operatorname{Tr}f(S_{cA,B})$, specifically for $f$ being the entropy function. For instance, Kitaev and Preskill argued in \cite{Kitaev_preskill} from a physical understanding of the entanglement entropy that the constant order term for $d=2$ in the expansion of $\operatorname{Tr} f(S_{cA,B})$ should carry topological information. While it is not our intention to place their claims on a rigorous mathematical footing, nor to verify them, we still see it as an interesting mathematical problem to obtain further terms for general functions $f$.

\begin{theorem}\label{TrS2}
Let $A, B\subseteq \R^d$ be finite unions of axis-parallel boxes. The trace of $S^2_{A,B}$ can be computed in terms of the side lengths of boxes constituting $A$ and $B$ by means of four standard arithmetic operations, exponentiation, taking logarithms and the exponential integral function ${\rm E_1}(z) = \int_z^\infty \frac{e^{-t}}{t}$.
\end{theorem}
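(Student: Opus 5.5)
We compute ${\rm Tr}\, S_{A,B}^2$ directly from the kernel \eqref{kernel}. Since $S_{A,B}(x,y) = 1_A(x)\check 1_B(x-y)1_A(y)$ and $\check 1_B(-z)=\overline{\check 1_B(z)}$, the Hilbert--Schmidt identity gives
\[
{\rm Tr}\, S_{A,B}^2 = \int_{\R^d}\int_{\R^d} 1_A(x)1_A(y)\,|\check 1_B(x-y)|^2\,dx\,dy = \int_{\R^d} g_A(z)\,|\check 1_B(z)|^2\,dz ,
\]
where $g_A(z) = |A\cap (A+z)| = (1_A * 1_{-A})(z)$ is the covariogram of $A$. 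Write $A = \bigcup_i R_i$ and $B=\bigcup_j R'_j$ as unions of axis-parallel boxes, with disjoint interiors after subdivision (allowed by the remark after Theorem \ref{union and good}; measure-zero overlaps do not matter). Then $g_A = \sum_{i,i'} 1_{R_i}*1_{-R_{i'}}$ and $|\check 1_B|^2 = \sum_{j,j'} \check 1_{R'_j}\overline{\check 1_{R'_{j'}}}$. Every term factors over coordinates. Indeed, $1_{R_i}*1_{-R_{i'}}(z) = \prod_{k=1}^d \phi_k(z_k)$, where each $\phi_k$ is a one-dimensional trapezoid function: continuous, piecewise linear, with breakpoints at sums and differences of endpoints. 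Likewise, for intervals $J=[p,q]$ and $J'=[p',q']$ the function $\check 1_{J}(t)\overline{\check 1_{J'}(t)}$ is a linear combination of terms $e^{2\pi i t s}/(4\pi^2 t^2)$ with $s\in\{q-q', q-p', p-q', p-p'\}$. So ${\rm Tr}\, S_{A,B}^2$ is a finite sum of products of one-dimensional integrals of the form
\[
\int_\R \phi(t)\,\frac{e^{2\pi i s t}}{t^2}\,dt ,
\]
where $\phi$ is a compactly supported piecewise linear trapezoid.

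The key step is to evaluate these one-dimensional integrals in closed form. The singularity at $t=0$ cancels: summing over the four exponentials gives $|\check 1_J|^2$-type combinations bounded near $0$. So we group the four terms, or equivalently work with principal values and handle $1-\cos$ combinations. On each linear piece $[u,v]$ of $\phi$ we need $\int_u^v (\alpha+\beta t)\,e^{2\pi i s t}t^{-2}\,dt$. One integration by parts reduces $\int e^{i\omega t}t^{-2}\,dt$ to $-e^{i\omega t}/t + i\omega\int e^{i\omega t}t^{-1}\,dt$. The term $\int e^{i\omega t}t^{-1}\,dt$ over a segment not containing $0$ is a difference of values of ${\rm E_1}(-i\omega u)$ (exponential integral with imaginary argument). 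The pieces with $\beta t/t^2 = \beta/t$ give the same kind of term. On pieces containing $0$, the combination is regular, so it is better to pair the terms with $s$ and $-s$ and use real combinations; after cancellation, the total involves ${\rm E_1}$ evaluated at endpoints and logarithms $\log|u|$ arising from the $s=0$ terms. The $s=0$ terms occur exactly when endpoints of $J,J'$ coincide, for instance in the diagonal terms $j=j'$, and there $\int \phi(t)t^{-2}$ combined with other terms yields $\log$ of ratios of breakpoints. Everything is then expressed through rational operations, $e^{(\cdot)}$, $\log$, and ${\rm E_1}$, applied to side lengths and box positions. The problem is translation invariant in each set separately, and the relative positions of boxes within a union are data of the same kind as the side lengths (the statement's ``side lengths'' should be read as including the coordinates of the boxes).

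The main obstacle is bookkeeping the cancellation of the $t^{-2}$ and $t^{-1}$ singularities at the origin and at breakpoints where $\phi$ changes slope. My plan for this is to write $|\check 1_J(t)|^2$-type products as $\frac{1}{4\pi^2t^2}\sum_s c_s e^{2\pi i s t}$ with $\sum_s c_s = 0$ and $\sum_s c_s s = 0$, as needed for boundedness. I then integrate by parts twice against the piecewise linear $\phi$, transferring derivatives onto $\phi$: $\phi''$ is a finite sum of Dirac masses at breakpoints. Using $\frac{d^2}{dt^2}$ of $F_s(t) = \pi s\,(\ldots)$, the antiderivatives of $e^{2\pi i s t}/t^2$ are expressed through $t\,{\rm E_1}$, $e^{(\cdot)}$ and $\log|t|$. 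The integral then collapses to a finite sum of values of an explicit function $\Psi(s,t)$, built from $e^{2\pi i s t}$, $\log|t|$ and $s\,t\,{\rm E_1}(-2\pi i s t)$, evaluated at breakpoints and weighted by the jumps of $\phi'$. Verifying that the boundary terms vanish at infinity and at $0$ is exactly where the moment conditions on $c_s$ are used. Taking products over $k=1,\dots,d$ and summing over boxes then completes the proof.
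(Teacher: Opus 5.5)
Your proposal follows essentially the paper's route. Both arguments take the Hilbert--Schmidt norm of the kernel, split $A$ and $B$ into boxes, separate variables, and reduce to one-dimensional integrals of a trapezoid against combinations of $e^{2\pi i s t}/t^2$. These are evaluated through $\int e^{ir}r^{-2}\,dr=-e^{ir}/r+i\int e^{ir}r^{-1}\,dr$ and ${\rm E_1}$. The paper keeps both trapezoids in $\iint e^{2\pi i zw}(\cdot)(\cdot)\,dz\,dw$, integrates piece by piece and excises $[-\eps,\eps]$ around the singular point. You instead integrate out the Fourier variable and integrate by parts twice against $\phi''$; this is only a difference of bookkeeping.

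One claim in that bookkeeping is wrong. The moment conditions $\sum_s c_s=\sum_s c_s s=0$ only cancel the singular parts $t^{-1}$, $\log|t|$, $t\log|t|$ at the origin. They do not control the fact that ${\rm E_1}(-2\pi i s t)$ itself jumps by $i\pi\operatorname{sgn}(s)$ as $t$ crosses $0$, coming from the branch of $\log(-2\pi i s t)$.

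Take $H=\frac{1}{4\pi^2}\sum_s c_s\Psi(s,\cdot)$ with $\Psi(s,t)=(1-2\pi i s t){\rm E_1}(-2\pi i s t)-e^{2\pi i s t}$ and $\Psi(0,t)=-\log|t|$. For this $H$ the boundary terms at $0$ do \emph{not} vanish. For example, $H'(0^+)-H'(0^-)=-|J\cap J'|$, which contributes $\phi(0)|J\cap J'|$. For $A=[0,c]$, $B=[0,1]$ this is exactly the leading term $c$, which the formula ``sum of $[\phi'](b)H(b)$ over breakpoints'' would otherwise drop.

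These boundary contributions are explicit, as is the value $H(0)$ needed when $0$ is a breakpoint (which produces $\gamma$ and $\log(2\pi|s|)$). So keeping them, or adding the correction $a\operatorname{sgn}(t)+b|t|$ that makes $H$ smooth, repairs the argument, and the conclusion of the theorem stands.
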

Unfortunately, if $A$ and $B$ consist of respectively $n$ and $m$ boxes then our procedure gives us of order $9^d n^2m^2$ terms, making it fairly infeasible to do in practice. So, we will do an explicit computation only in the case $A = [0, c]$, $B = [0, 1]$.
\begin{theorem}\label{TrS2 explicit}
The following expansion holds:
\begin{align*}
\MoveEqLeft \operatorname{Tr}S_{[0,c], [0,1]}^2\\
&= c - \frac{\log(c)}{\pi^2} - \frac{1+\gamma + \log(2\pi)}{\pi^2} + \left(\frac{2}{\pi} c\left({\rm Si}(2\pi c)-\frac{\pi}{2}\right) + \frac{\cos(2\pi c)}{\pi^2} + \frac{{\rm Ci}(2\pi c)}{\pi^2}\right)\\
&=c - \frac{\log(c)}{\pi^2} - \frac{1+\gamma + \log(2\pi)}{\pi^2}\\
&\quad - \frac{\cos(2\pi c)}{\pi^2}\sum_{n=1}^\infty (-1)^{n} \frac{(2n)!-(2n-1)!}{(2 \pi c)^{2n}} - \frac{\sin(2\pi c)}{\pi^2} \sum_{n=1}^\infty (-1)^n \frac{(2n+1)!-(2n)!}{(2\pi c)^{2n+1}},
\end{align*}
where ${\rm Si(t)} = \int_0^t \frac{\sin (x)}{x}dx$ is the sine integral, ${\rm Ci(t)} = -\int_t^\infty \frac{\cos(x)}{x}dx$ is the cosine integral and $\gamma$ is the Euler--Mascheroni constant. The series is asymptotic, meaning that if we take the first $N$ terms in both sums then the error will be $O\left(\frac{1}{c^{2N+2}}\right)$.
\end{theorem}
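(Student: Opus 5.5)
The plan is to reduce the trace to an explicit one-dimensional integral and then evaluate it exactly in terms of ${\rm Si}$ and ${\rm Ci}$.

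\textbf{Step 1: reduction to an integral.} By \eqref{kernel}, the operator $S=S_{[0,c],[0,1]}$ has kernel $1_{[0,c]}(x)\check 1_{[0,1]}(x-y)1_{[0,c]}(y)$. Since $S$ is self-adjoint, ${\rm Tr}\,S^2$ equals the squared Hilbert--Schmidt norm of $S$. We have $|\check 1_{[0,1]}(t)|^2=\frac{\sin^2(\pi t)}{\pi^2t^2}$. Changing variables to $t=x-y$ and using that the measure of $\{(x,y)\in[0,c]^2: x-y=t\}$ is $c-|t|$ gives
\[
{\rm Tr}\,S^2=\int_{-c}^{c}(c-|t|)\frac{\sin^2(\pi t)}{\pi^2t^2}\,dt
= 2c\int_0^c\frac{\sin^2(\pi t)}{\pi^2t^2}\,dt-\frac{2}{\pi^2}\int_0^c\frac{\sin^2(\pi t)}{t}\,dt .
\]

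\textbf{Step 2: the volume term.} Use $\int_0^\infty\frac{\sin^2(\pi t)}{\pi^2t^2}dt=\frac12$ to write the first term as $c-\frac{2c}{\pi^2}\int_c^\infty\frac{\sin^2(\pi t)}{t^2}dt$. Integrating the tail by parts gives
\[
\int_c^\infty\frac{\sin^2(\pi t)}{t^2}dt=\frac{\sin^2(\pi c)}{c}+\pi\int_c^\infty\frac{\sin(2\pi t)}{t}dt=\frac{\sin^2(\pi c)}{c}+\pi\Bigl(\frac{\pi}{2}-{\rm Si}(2\pi c)\Bigr).
\]
The first term therefore becomes
\[
c-\frac{2\sin^2(\pi c)}{\pi^2}+\frac{2c}{\pi}\Bigl({\rm Si}(2\pi c)-\frac{\pi}{2}\Bigr).
\]
Writing $2\sin^2(\pi c)=1-\cos(2\pi c)$, this equals $c-\frac1{\pi^2}+\frac{\cos(2\pi c)}{\pi^2}+\frac{2c}{\pi}\bigl({\rm Si}(2\pi c)-\frac{\pi}{2}\bigr)$.

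\textbf{Step 3: the logarithmic term.} Write $\sin^2(\pi t)=\frac{1-\cos(2\pi t)}{2}$ and substitute $x=2\pi t$. This gives
\[
\int_0^c\frac{\sin^2(\pi t)}{t}\,dt=\frac12\int_0^{2\pi c}\frac{1-\cos x}{x}\,dx .
\]
Apply the classical identity $\int_0^X\frac{1-\cos x}{x}dx=\gamma+\log X-{\rm Ci}(X)$. The second term then contributes
\[
-\frac{\gamma+\log(2\pi)+\log c-{\rm Ci}(2\pi c)}{\pi^2}.
\]
Adding Steps 2 and 3 yields exactly the first closed form in Theorem \ref{TrS2 explicit}.

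\textbf{Step 4: the asymptotic series.} This is the only delicate point, and it is standard. Use the auxiliary functions $f,g$ with
\[
{\rm Si}(x)-\tfrac{\pi}{2}=-f(x)\cos x-g(x)\sin x,\qquad {\rm Ci}(x)=f(x)\sin x-g(x)\cos x,
\]
where
\[
f(x)=\int_0^\infty\frac{\sin t}{t+x}dt,\qquad g(x)=\int_0^\infty\frac{\cos t}{t+x}dt .
\]
Repeated integration by parts gives the asymptotic expansions $f(x)\sim\sum_{n\ge0}(-1)^n\frac{(2n)!}{x^{2n+1}}$ and $g(x)\sim\sum_{n\ge0}(-1)^n\frac{(2n+1)!}{x^{2n+2}}$. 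Truncating after $N$ terms leaves a remainder of the size of the next term, since the remainder is an explicit integral of the same type.

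Now substitute $x=2\pi c$. The quantity $\frac{2c}{\pi}=\frac{x}{\pi^2}$ multiplies $f$ and $g$, which shifts the powers by one. Combining with the $\frac{\cos x}{\pi^2}$ and $\frac{{\rm Ci}(x)}{\pi^2}$ terms, the $n=0$ term $\frac{x}{\pi^2}\cdot\frac1x\cos x$ cancels against $\frac{\cos x}{\pi^2}$. The remaining coefficients pair up into $(2n)!-(2n-1)!$ for $\cos(2\pi c)/(2\pi c)^{2n}$ and $(2n+1)!-(2n)!$ for $\sin(2\pi c)/(2\pi c)^{2n+1}$, with the stated signs. The truncation remainders, multiplied by $x$, give the error $O(c^{-2N-2})$.

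The main obstacle is bookkeeping rather than analysis: carefully checking the signs and index shifts in this final recombination. The integration-by-parts remainder bounds for $f$ and $g$ are classical.
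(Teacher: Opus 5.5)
Your proposal is correct and follows essentially the paper's route. Like the paper, you reduce ${\rm Tr}\,S^2$ to $\int_{-c}^{c}(c-|z|)\frac{\sin^2(\pi z)}{\pi^2 z^2}\,dz$, evaluate it exactly via ${\rm Si}$ and ${\rm Ci}$, and insert the standard large-argument expansions; your Plancherel/integration-by-parts step and the identity $\int_0^X\frac{1-\cos x}{x}\,dx=\gamma+\log X-{\rm Ci}(X)$ are a repackaging of the paper's explicit primitive and its $\eps\to0^+$ limit, and your $f,g$ bookkeeping reproduces both cancellations the paper notes (the $\sin$ one appearing as $1!-0!=0$).
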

Note that the oscillations in $\operatorname{Tr}S_{[0,c], [0,1]}^2$ start appearing only at $O(\frac{1}{c^2})$ term. Our argument shows that they can potentially appear already in $O(1)$ term, but it turned out that both $O(1)$ and $O(\frac{1}{c})$ oscillating terms cancelled out. We do not have an a priori explanation for this phenomenon.

\subsection{One-term asymptotics}
If we are only interested in the first term in the expansion \eqref{area law two term} then the conditions on $A, B$ and $f$ can be greatly relaxed, in particular we do not need to assume anything about the geometries of $\partial A$ and $\partial B$.

Since for any $A, B$ of finite measure ${\rm Tr}S_{A,B} = |A| |B|$ is finite, for any function $f:[0,1]\to\Cm$ such that $|f(\theta)|\le C\theta, \theta\in[0, 1]$ we have that $f(S_{A,B})$ is trace class. If in addition we assume that $f$ is continuous at $1$ then we can establish a one-term asymptotic formula for ${\rm Tr}f(S_{cA,B})$.

\begin{theorem}\label{one term general}
Let $A, B \subseteq \R^d$ be sets with finite measure. If $f:[0,1]\to \Cm$ is continuous at $1$ and satisfies $|f(\theta)| \leq C \theta$ for some $C>0$, then
\begin{equation}\label{one-term}
\operatorname{Tr} f(S_{cA,B}) = c^d|A||B| f(1) + o(c^{d}),
\end{equation}
as $c\to \infty$.
\end{theorem}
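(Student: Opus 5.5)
Write $S_c = S_{cA,B}$ and $\lambda_n = \lambda_n(cA,B)$. The plan is to split $f$ into a linear part and a part that vanishes at $1$, and to control that part using only two facts: ${\rm Tr}\, S_c$ is exact, and ${\rm Tr}(S_c - S_c^2)$ is $o(c^d)$.

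\emph{Reduction.} Set $g(\theta) = f(\theta) - f(1)\theta$. Then $g$ is continuous at $1$ with $g(1) = 0$, and $|g(\theta)| \le (C + |f(1)|)\theta$. Since ${\rm Tr}\, S_c = c^d|A||B|$, we have ${\rm Tr} f(S_c) = c^d|A||B| f(1) + \sum_n g(\lambda_n)$. It therefore suffices to show $\sum_n g(\lambda_n) = o(c^d)$.

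\emph{Splitting the sum.} Fix $\eta \in (0,1/2)$ and choose $\delta = \delta(\eta)$ with $|g(\theta)| \le \eta \theta$ for $\theta \ge 1-\delta$. Eigenvalues with $\lambda_n \ge 1 - \delta$ contribute at most $\eta \sum_n \lambda_n = \eta c^d|A||B|$. For the others, $\lambda_n < 1-\delta$ gives $1 - \lambda_n > \delta$, so $|g(\lambda_n)| \le C' \lambda_n \le C'\delta^{-1}\lambda_n(1-\lambda_n)$. Hence
\[
\Bigl|\sum_n g(\lambda_n)\Bigr| \le \eta c^d|A||B| + C'\delta^{-1}\,{\rm Tr}(S_c - S_c^2).
\]
Since $\eta$ is arbitrary, the theorem follows once we show ${\rm Tr}(S_c - S_c^2) = o(c^d)$.

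\emph{The key estimate.} We compute
\[
{\rm Tr}\, S_c^2 = \int_{cA}\int_{cA} |\check 1_B(x-y)|^2 \, dx\, dy = \int_{\R^d} |\check 1_B(z)|^2\, |cA \cap (cA + z)| \, dz.
\]
By Plancherel, $\int |\check 1_B|^2 = |B|$, so
\[
{\rm Tr}(S_c - S_c^2) = \int_{\R^d} |\check 1_B(z)|^2 \bigl(c^d|A| - |cA \cap (cA+z)|\bigr)\, dz = c^d \int_{\R^d} |\check 1_B(z)|^2 \bigl(|A| - |A \cap (A + z/c)|\bigr)\, dz.
\]
The integrand is bounded by $|A|\,|\check 1_B(z)|^2$, which is integrable. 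For each fixed $z$ it tends to $0$ as $c \to \infty$, because translation is continuous in $L^1$: $|A| - |A \cap (A+h)| = \tfrac12 \|1_A - 1_{A+h}\|_1 \to 0$ as $h \to 0$. Dominated convergence then gives ${\rm Tr}(S_c - S_c^2) = o(c^d)$, with no geometric assumptions on $\partial A$ or $\partial B$.

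The argument is elementary. The only step that needs some care is this last trace computation, namely justifying the kernel formula for ${\rm Tr}\, S_c^2$ as a Hilbert--Schmidt norm. This holds because $S_c^{1/2}$ is Hilbert--Schmidt when $A$ and $B$ have finite measure.
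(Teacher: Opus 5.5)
Your proposal is correct and follows essentially the same route as the paper: subtract $f(1)\theta$, control the eigenvalues above $1-\delta$ via continuity at $1$ and $\operatorname{Tr} S_{cA,B}=c^d|A||B|$, and bound the rest by $\delta^{-1}\operatorname{Tr}(S_{cA,B}-S_{cA,B}^2)$, which is $o(c^d)$ by the Hilbert--Schmidt kernel formula and dominated convergence. The only cosmetic difference is that the paper writes this trace directly as $\|P_{cA}Q_BP_{cA^c}\|_2^2$, whereas you obtain the same integrand $|A|-|A\cap(A+z/c)|$ by subtracting $\operatorname{Tr}S_{cA,B}^2$ from $\operatorname{Tr}S_{cA,B}$ via Plancherel.
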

Although the condition $|f(\theta)|\leq C\theta$ might seem restrictive, in this generality it is actually the optimal one.
\begin{proposition}\label{one term counterexample}
Given a function $f$ such that $\lim_{x\to 0^+} \frac{|f(x)|}{x} = \infty$ and a set $B\subseteq \R$ with finite and positive measure, there exists a set $A\subseteq \R$ of finite measure such that $f(S_{A,B})$ is not trace class. 
\end{proposition}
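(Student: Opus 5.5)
The plan is to build $A$ as a disjoint union of very many tiny, widely separated intervals. We never compute any eigenvalue exactly; we only need a lower bound on how many eigenvalues of $S_{A,B}$ are small but not too small. This avoids any regularity assumption on $f$, which may be arbitrary apart from the growth condition. Write $|f(x)|\ge g(x)\,x$, where $g(x)\to\infty$ as $x\to0^+$. For each $k\ge1$ fix $\delta_k\in(0,1)$ such that $|f(x)|\ge 2^k x$ for $0<x<\delta_k$.

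\emph{Step 1: one small interval.} For an interval $I$ of length $\ell$, the kernel \eqref{kernel} is $1_I(x)\check 1_B(x-y)1_I(y)$ with $\check 1_B(0)=|B|$, and $\check 1_B$ is uniformly continuous. For $u=\ell^{-1/2}1_I$ this gives
\[
\langle S_{I,B}u,u\rangle=\ell^{-1}\iint_{I\times I}\check 1_B(x-y)\,dx\,dy=\ell|B|(1+o(1))\quad\text{as }\ell\to0 .
\]
In the other direction, for distinct disjoint intervals $I,I'$ of length $\ell$ and normalized indicators $u,u'$, we have $|\langle S_{A,B}u,u'\rangle|\le \ell\sup_{|z|\ge {\rm dist}(I,I')}|\check 1_B(z)|$. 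By Riemann--Lebesgue this tends to $0$ as the separation grows.

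\emph{Step 2: choice of parameters and min-max.} Choose lengths $\ell_k$ with $x_k:=\ell_k|B|$ and integers $N_k$ such that $N_kx_k\approx 2^{2-k}|B|\min(1,|B|)$ (so $\sum_k N_k\ell_k<\infty$, $|A|<\infty$ and $|A||B|\le 1$, say). We also require $x_k$ small enough that $N_k\ge 4/\delta_k$. Let $A$ consist of $N_k$ intervals of length $\ell_k$ for every $k$, placed with separations growing so fast that the sum of all off-diagonal Gram entries above is negligible. Concretely, each off-diagonal entry involving a block-$k$ interval is at most $2^{-j}x_k/100$ for the $j$-th pair, which is possible inductively because we place the intervals one at a time. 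Consider the Gram matrix of $S_{A,B}$ on the span of the normalized indicators of all intervals in blocks $1,\dots,K$. It is a small perturbation, in Hilbert--Schmidt norm, of ${\rm diag}$ with $N_k$ entries $\ge \tfrac34x_k$. Applying min-max for each $K$ then shows that the decreasing eigenvalue sequence $(\lambda_n)$ dominates the sequence $(\mu_n)$ obtained by listing $x_k/2$ with multiplicity $N_k$ in decreasing order.

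\emph{Step 3: summation.} Since ${\rm Tr}\,S_{A,B}=|A||B|\le1$, at most $1/\delta_k$ eigenvalues are $\ge\delta_k$. Among the indices $n$ assigned to block $k$ in $(\mu_n)$, at least $N_k-1/\delta_k\ge N_k/2$ therefore satisfy $x_k/2\le\lambda_n<\delta_k$. For these, $|f(\lambda_n)|\ge 2^k\lambda_n\ge 2^{k-1}x_k$. Hence
\[
\sum_n|f(\lambda_n)|\ge\sum_k \tfrac{N_k}{2}\,2^{k-1}x_k\gtrsim\sum_k 1=\infty,
\]
so $f(S_{A,B})$ is not trace class. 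The delicate point, and the main obstacle, is Step 2: the perturbation argument must be uniform over infinitely many blocks, and the ordering in $(\mu_n)$ must be matched correctly with blocks. The inductive placement with summable off-diagonal errors, measured relative to the smallest diagonal entry involved, is what I would use to control this.
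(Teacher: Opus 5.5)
Your construction differs from the paper's and can be completed, but Step~2 does not go through as written. An absolute perturbation bound does not work here. By that I mean the Hilbert--Schmidt norm of the off-diagonal part $E_K$ combined with min--max, i.e.\ Weyl's inequality $\lambda_n(G_K)\ge\lambda_n(D_K)-\|E_K\|$. The entries of $E_K$ between two block-$1$ intervals are fixed nonzero numbers once those intervals are placed, while $x_K\to0$. So $\frac34x_K-\|E_K\|<0$ for large $K$, and you learn nothing about the block-$K$ eigenvalues.

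What is needed is a \emph{multiplicative} comparison. Write $G_K=D_K^{1/2}(I+F_K)D_K^{1/2}$ with $F_K=D_K^{-1/2}E_KD_K^{-1/2}$. Take your placement rule to mean that the $j$-th pair, joining blocks $k$ and $k'$, has entry at most $2^{-j}\min(x_k,x_{k'})/100$. Together with $d_i\ge\frac34x_{k(i)}$, this gives $\|F_K\|\le\|F_K\|_{HS}\le\frac13$ uniformly in $K$. Hence $\langle G_Kv,v\rangle\ge(1-\|F_K\|)\langle D_Kv,v\rangle$ for all $v$, and min--max yields $\lambda_n(G_K)\ge\frac23\lambda_n(D_K)$. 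This is exactly the domination by $(\mu_n)$ you want.

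Also fix the constants. Since $|A||B|=\sum_kN_kx_k\approx4|B|\min(1,|B|)$, this need not be $\le1$; it suffices to require $N_k\ge4|A||B|/\delta_k$.

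Alternatively, you can avoid cross-block interactions entirely. Choose $\sum_kN_kx_k<\infty$ but $2^kN_kx_k\to\infty$, e.g.\ $N_kx_k\approx2^{-k/2}$. Within a single block all diagonal entries are comparable, so Weyl's inequality does give $N_k$ eigenvalues of $S_{A_k,B}$ that are $\ge x_k/2$. Lemma~\ref{enlargement} transfers these to $S_{A,B}$, and your Chebyshev count then gives $\sum_n|f(\lambda_n(A,B))|\gtrsim2^kN_kx_k$ for every $k$.

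For comparison, the paper never bounds eigenvalues from below. It splits $A_k$, with $|A_k|=2^{-k}$, into $N$ far-apart intervals so that $\lambda_1(A_k,B)\le(\operatorname{Tr}S_{A_k,B}^2)^{1/2}\le x_k$. This puts every eigenvalue below the threshold where $g(x)\ge4^kx$. The exact identity $\operatorname{Tr}S_{A_k,B}=|A_k||B|$ then gives $\sum_ng(\lambda_n(A_k,B))\ge2^k|B|$ immediately, and Lemma~\ref{enlargement} passes this to $A=\bigcup_kA_k$. The price is that this last transfer requires a non-decreasing function, which is why the paper first replaces $f$ by a monotone minorant $g$.

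Your approach needs no minorant. The trace bound confines all but $|A||B|/\delta_k$ of the relevant eigenvalues to $(0,\delta_k)$, where $|f(x)|\ge2^kx$ holds directly. In exchange, it needs quantitative lower bounds on many individual eigenvalues, hence the Gram-matrix perturbation analysis. The paper's upper-bound-plus-trace argument is the shorter of the two.
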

In particular, this applies to the entropy function $f(\theta) = -\theta \log(\theta) - (1-\theta) \log(1-\theta)$.

On the other hand, if the sets $A$ and $B$ are bounded then we can replace the assumption $|f(\theta)|\le C\theta$ with the assumption of $f$ being bounded and  trace class admissible for $L^2(\R^d)$ as in Theorem \ref{good area law}.

\begin{theorem}\label{one term bounded}
Let $A, B \subseteq \R^d$ be bounded sets and assume that $f$ is bounded, trace class admissible for $L^2(\R^d)$ and that $f$ is continuous at $1$. Then 
\[
\operatorname{Tr} f(S_{cA, B}) = c^d|A||B| f(1) + o(c^{d}),
\]
as $c\to \infty$.
\end{theorem}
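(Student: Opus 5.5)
We split $f$ into a part that Theorem \ref{one term general} handles and a remainder supported near $0$, and show that the remainder contributes $o(c^d)$.

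Fix $\delta\in(0,\tfrac12)$ and write $f = f_1 + f_2$ with
\[
f_1 = f\,1_{(\delta,1]}, \qquad f_2 = f\,1_{[0,\delta]}.
\]
Since $f$ is bounded, $|f_1(\theta)|\le \|f\|_\infty \delta^{-1}\theta$. Also $f_1$ is continuous at $1$ and $f_1(1)=f(1)$. Theorem \ref{one term general} therefore gives
\[
\operatorname{Tr} f_1(S_{cA,B}) = c^d|A||B|f(1)+o(c^d).
\]
Both $f_1(S)$ and $f_2(S)$ are trace class: $f_1(S)$ by the linear bound, and $f_2(S)$ by Theorem \ref{trace class sufficient}, since $f_2$ is trace class admissible because $|f_2|\le |f|$ pointwise. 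Hence $\operatorname{Tr} f(S)=\operatorname{Tr} f_1(S)+\operatorname{Tr} f_2(S)$, and it remains to show
\[
\limsup_{c\to\infty} c^{-d}\,|\operatorname{Tr} f_2(S_{cA,B})| \longrightarrow 0 \quad\text{as } \delta\to 0.
\]

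For the remainder we use monotonicity in the sets and the tensor structure. Enclose $A\subset Q_1$ and $B\subset Q_2$ in cubes. If $A\subset A'$, then $P_AQ_BP_A = P_A(P_{A'}Q_BP_{A'})P_A$, so by min-max $\lambda_n(A,B)\le\lambda_n(A',B)$. Using the symmetry $\lambda_n(A,B)=\lambda_n(B,A)$, we get the same in $B$, so
\[
N_\eps(cA,B)\le N_\eps(cQ_1,Q_2).
\]
Now layer-cake:
\[
|\operatorname{Tr} f_2(S)| \le \sum_{\lambda_n\le\delta} M_0f(\lambda_n) \le \int_0^\delta N_\eps(cQ_1,Q_2)\, d(M_0 f)(\eps),
\]
a Stieltjes integral against the nondecreasing function $M_0f$; boundary terms vanish since $M_0 f(0^+)=0$.

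The key step is a uniform bound on $N_\eps$ for two boxes. From Proposition \ref{two boxes} and \eqref{N and Lambda}, writing $L=\log(1/\eps)$:
\begin{itemize}
\item For $\eps > \alpha^{-c}$: $N_\eps \lesssim c^d + c^{d-1}L\log\frac{\alpha c}{L}$. Since $L\le c\log\alpha$, the function $t\mapsto t\log(\alpha c/t)$ with $t=L/c$ gives $c^{d-1}L\log(\alpha c/L)\lesssim c^d$. So $N_\eps\lesssim c^d$ on this range.
\item For $\eps\le \alpha^{-c}$: $N_\eps \lesssim c^d + \bigl(L/\log(L/c)\bigr)^d$. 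In this range $L\ge c\log\alpha$, and one checks $L/\log(L/c)\lesssim \max\bigl(c,\ L/\log L\bigr)$ up to constants (compare the cases $L\le c^2$ and $L>c^2$). Hence $N_\eps\lesssim c^d + (L/\log L)^d$.
\end{itemize}
Plugging this in,
\[
c^{-d}|\operatorname{Tr} f_2| \lesssim M_0f(\delta) + c^{-d}\int_0^{\delta}\Bigl(\frac{\log(1/\eps)}{\log\log(1/\eps)}\Bigr)^d d(M_0f)(\eps).
\]

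We then integrate the second term by parts. Its integrand $g(\eps)$ has $|g'(\eps)|\asymp \log(1/\eps)^{d-1}/\bigl(\eps(\log\log(1/\eps))^d\bigr)$, which is exactly the weight in \eqref{trace class condition}. Two things need checking:
\begin{itemize}
\item The boundary term at $0$ vanishes, i.e. $M_0f(\eps)\,g(\eps)\to 0$ as $\eps\to 0$. This follows from admissibility, since $M_0 f(\eps)\,g(\eps)\le \int_0^\eps M_0f\,|g'|$ by monotonicity of $M_0f$.
\item The resulting integral is finite and independent of $c$.
\end{itemize}
So the second term is $O_\delta(c^{-d})\to 0$. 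Letting $c\to\infty$ and then $\delta\to0$, and using $M_0f(\delta)\to 0$ (which holds by admissibility), completes the proof.

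I expect the main obstacle to be the bookkeeping in this last part: the uniform bound $N_\eps\lesssim c^d+(L/\log L)^d$ across both regimes of Proposition \ref{two boxes}, together with the Stieltjes integration by parts. The rest is soft.
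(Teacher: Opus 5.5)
Your split $f=f_1+f_2$ is a legitimate alternative to the paper's route, but the step you single out as the key one rests on a false inequality. The claim $L/\log(L/c)\lesssim \max\bigl(c,\,L/\log L\bigr)$ for $L\ge c\log\alpha$ fails when $L/c\to\infty$ slowly. Take $L=c\log c$, i.e.\ $\eps=c^{-c}$, which satisfies $\eps\le\alpha^{-c}$ once $c\ge\alpha$. Then $L/\log(L/c)=c\log c/\log\log c$, while $L/\log L<c$, so the ratio is $\log c/\log\log c\to\infty$. Your case split does work for $L>c^2$, where $\log(L/c)\ge\frac12\log L$. But for $c\log\alpha\le L\le c^2$, neither $c$ nor $L/\log L$ dominates $L/\log(L/c)$ uniformly.

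This is not merely a loose estimate. Since \eqref{Lambda -, equivalence, two cubes} is two-sided, at $\eps=c^{-c}$ one has $N_\eps(cQ_1,Q_2)\ge\Lambda^-_\eps(cQ_1,Q_2)\gtrsim (c\log c/\log\log c)^d$. So the uniform bound $N_\eps\lesssim c^d+(L/\log L)^d$ is false. What is true is
\[
\frac{\log L}{\log(L/c)}=1+\frac{\log c}{\log(L/c)}\le 1+\frac{\log c}{\log\log\alpha}\qquad (L\ge c\log\alpha).
\]
Hence $N_\eps\lesssim c^d+(\log c)^d(L/\log L)^d$ for $\eps\le\alpha^{-c}$, and your final estimate becomes $c^{-d}|\operatorname{Tr} f_2(S_{cA,B})|\lesssim M_0f(\delta)+C_\delta\,c^{-d}(\log c)^d$. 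This still suffices, but only because the $(\log c)^d$ loss is beaten by $c^{-d}$. It is exactly the $\log(c)^d$ factor the paper carries in \eqref{trace class condition bound}.

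A second, smaller error: your justification that the boundary term at $0$ vanishes is backwards. Monotonicity gives $M_0f\le M_0f(\eps)$ on $[0,\eps]$, and $\int_0^\eps|g'|=\infty$, so $M_0f(\eps)g(\eps)\le\int_0^\eps M_0f\,|g'|$ does not follow. The correct tail argument is $M_0f(\eps)\bigl(g(\eps)-g(\eta)\bigr)\le\int_\eps^\eta M_0f\,|g'|$ for $\eps<\eta$. More simply, write $g(\eps)=g(\delta)+\int_\eps^\delta|g'(t)|\,dt$ and apply Tonelli. This gives $\int_{(0,\delta]}g\,d(M_0f)\le g(\delta)M_0f(\delta^+)+\int_0^\delta M_0f\,|g'|$ with no boundary term at all.

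With these two repairs the proof is correct. The paper instead splits the spectrum into four ranges and uses an index cutoff $n\le Dc^d$ to isolate eigenvalues below $\alpha^{-c}$. Your version packages the ranges near $1$ and in $[\eps,1-\eps]$ into Theorem \ref{one term general}. It then replaces the index cutoff by the uniform bound $N_\eps\lesssim c^d$ for $\eps>\alpha^{-c}$ inside a layer-cake integral. Otherwise the two arguments use the same inputs.
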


\section{Previous results on one-dimensional operators}
In this section we will collect previous results on the distribution of eigenvalues of $S_{A,B}$ in the simplest case when $A$ and $B$ are one-dimensional intervals and restate them in the form that will be convenient for our use. By rescaling we can always assume that $A = [0, c], B = [0, 1]$ so that we have a sequence $(\lambda_n(c))$ of eigenvalues of $S_{A,B}$. The first result about their behaviour was found by Slepian \cite{Slepain_65} and rigorously proved by Landau and Widom \cite{widom_landau}.

\begin{theorem}\label{Slepian}
For all bounded intervals $A, B \subseteq \R$ and all fixed $0<a<1$ we have
\[
N_a(A,B) = |A||B| + \tfrac{1}{\pi^2}\log\big(\tfrac{1-a}{a}\big)\log(|A||B|) + o(\log (|A||B|)),
\]
as $|A||B| \to \infty$.
\end{theorem}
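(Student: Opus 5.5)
By scaling we may assume $A=[0,c]$, $B=[0,1]$, so that $|A||B|=c$ and $S_c:=S_{[0,c],[0,1]}$ has the sine kernel $1_{[0,c]}(x)\frac{\sin \pi(x-y)}{\pi(x-y)}e^{i\pi(x-y)}1_{[0,c]}(y)$. The plan follows Landau--Widom. First establish the two-term formula \eqref{polynomial case} (with $d=1$) for polynomials $f$ with $f(0)=0$, namely
\[
{\rm Tr} f(S_c) = c f(1) + \frac{\log c}{\pi^2}\int_0^1 \frac{f(t)-tf(1)}{t(1-t)}\,dt + o(\log c).
\]
Then pass from polynomials to the indicator $f=1_{(a,1]}$ by approximation.

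\textbf{Polynomial case.} Since ${\rm Tr}\,S_c = c$, it suffices to treat $f(t)=t^k-t$, or equivalently $t^k(1-t)$-type combinations. The base case is a direct computation,
\[
{\rm Tr}(S_c-S_c^2)=\int_0^c\!\!\int_{y\notin[0,c]}\Big(\frac{\sin\pi(x-y)}{\pi(x-y)}\Big)^2\,dy\,dx .
\]
Averaging $\sin^2$ to $1/2$, this equals $\frac{1}{2\pi^2}\cdot 2\log c+O(1)=\frac{\log c}{\pi^2}+O(1)$, which is consistent with $\int_0^1\frac{t^2-t}{t(1-t)}\,dt=-1$.

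For general $k$, view $S_c$ as a truncated Wiener--Hopf (convolution) operator on $[0,c]$ with symbol $1_{[0,1]}$, which has two jump discontinuities. Use the localization principle: ${\rm Tr}\,S_c^k - {\rm Tr}\,P_{[0,c]}Q_B^kP_{[0,c]}$ splits, up to $O(1)$ errors, into two independent contributions, one from each endpoint of $[0,c]$. Each contribution is the half-line problem ${\rm Tr}\,(P_{[0,\infty)}Q_BP_{[0,\infty)})^k-\dots$ truncated at scale $c$. This reduces to Widom's formula for Wiener--Hopf operators with a piecewise-constant symbol: each of the two symbol jumps, at each of the two interval endpoints, contributes $\frac{\log c}{4\pi^2}\int_0^1\frac{f(t)-tf(1)}{t(1-t)}\,dt$. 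In practice one can do this via the Mellin transform, diagonalizing the half-line Hankel-type operator with kernel $\frac{1}{\pi(x+y)}$, whose spectral density produces the weight $\frac{1}{t(1-t)}$. I expect this step, the $o(\log c)$ control of the cross terms between the two endpoints for arbitrary $k$, to be the main obstacle.

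\textbf{From polynomials to the indicator.} Fix $0<a<1$ and $\eta>0$. Choose polynomials $p_\pm$ with $p_\pm(0)=0$, $p_\pm(1)=1$, and $p_-\le 1_{(a,1]}\le p_+$ on $[0,1]$. Choose them so that
\[
\Big|\int_0^1\frac{p_\pm(t)-1_{(a,1]}(t)}{t(1-t)}\,dt\Big|<\eta .
\]
This is possible by Weierstrass approximation applied to $(g-\text{ind})/(t(1-t))$, using sandwiching functions that agree with the indicator near $0$ and $1$ up to a multiple of $t(1-t)$. Then
\[
{\rm Tr}\,p_-(S_c)\le N_a\le {\rm Tr}\,p_+(S_c).
\]
Since $\int_0^1\frac{1_{(a,1]}(t)-t}{t(1-t)}\,dt=\int_a^1\frac{dt}{t}-\int_0^a\frac{dt}{1-t}=\log\frac{1-a}{a}$, the polynomial formula gives
\[
N_a = c+\frac{\log c}{\pi^2}\Big(\log\frac{1-a}{a}+O(\eta)\Big)+o(\log c).
\]
Letting $\eta\to0$ completes the argument.
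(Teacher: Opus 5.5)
Your route is the Landau--Widom argument the paper attributes this theorem to: two-term asymptotics for ${\rm Tr}\,f(S_c)$ with $f$ polynomial, then sandwiching the indicator between polynomials. The paper itself gives no proof beyond citing \cite{widom_landau}.

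The sandwich step is correct. Writing $p_\pm(t)=t+t(1-t)q_\pm(t)$ gives $p_\pm(0)=0$ and $p_\pm(1)=1$. It also reduces everything to approximating the bounded function $h(t)=\frac{1_{(a,1]}(t)-t}{t(1-t)}$ from above and below in $L^1$ by polynomials $q_-\le h\le q_+$; here $h=-\frac{1}{1-t}$ on $(0,a]$ and $h=\frac1t$ on $(a,1)$. The resulting inequalities ${\rm Tr}\,p_-(S_c)\le N_a\le {\rm Tr}\,p_+(S_c)$, together with $\int_0^1\frac{1_{(a,1]}(t)-t}{t(1-t)}\,dt=\log\frac{1-a}{a}$, give the statement once the polynomial formula is known.

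As a self-contained proof, however, there is a genuine gap exactly where you expect it. The polynomial formula is the entire content of the theorem, and apart from $k=2$ you only assert it. Two things are claimed without argument.
\begin{itemize}
\item That ${\rm Tr}\,S_c^k-c$ decouples into contributions from the two endpoints of $[0,c]$ and the two jumps of $1_{[0,1]}$, with cross terms of size $o(\log c)$.
\item That each local contribution equals $\frac{\log c}{4\pi^2}\int_0^1\frac{f(t)-tf(1)}{t(1-t)}\,dt$. This first requires a regularization, since the half-line operators are not trace class.
\end{itemize}

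The local model you name is also not sufficient. The Carleman kernel $\frac{1}{\pi(x+y)}$ comes from $P_{(-\infty,0)}Q_{[0,\infty)}P_{[0,\infty)}$. Its Mellin symbol $\frac{1}{\cosh(\pi\xi)}$ therefore encodes only $S-S^2$, i.e.\ the distribution of $\lambda(1-\lambda)$. This is blind to $\lambda\leftrightarrow 1-\lambda$, so it cannot give ${\rm Tr}\,S_c^3$, and in particular it does not determine $N_{1/2}-c$ up to $o(\log c)$.

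What produces the weight is the compressed Hilbert transform on $(0,\infty)$, with kernel p.v.\ $\frac{1}{\pi(x-y)}$ and Mellin symbol $-i\tanh(\pi\xi)$. It shows that $P_{[0,\infty)}Q_{[0,\infty)}P_{[0,\infty)}$ has ``spectrum'' $\lambda(\xi)=(1+e^{-2\pi\xi})^{-1}$. The density of states $\frac{\log c}{2\pi}\,d\xi$ then becomes $\frac{\log c}{4\pi^2}\frac{d\lambda}{\lambda(1-\lambda)}$.

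To close the argument you have two options. You can make this localization rigorous, including the $o(\log c)$ control of all interactions. Alternatively, as the paper does, you can quote the polynomial formula (the case $d=1$ of \eqref{polynomial case}) from \cite{widom_landau}; with that input your argument is complete.
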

Note that this result is slightly different from the statement in \cite{widom_landau} as we use a different normalization of the Fourier transform. This also applies to some of the following results.

Their proof proceeded by first establishing two-term asymptotics for ${\rm Tr}\, S^n_{A,B}$ for all powers $n\in\N$ (even with $O(1)$ error term) and then approximating characteristic function of an interval by polynomials. However, since their argument had extremely poor uniformity in $n$ and thus extremely poor uniformity in $a$, it is virtually impossible to use their estimates to bound $N_a(A,B)$ and $\Lambda_a(A,B)$ for varying $a$. So, there was a lot of research on uniform estimates for these numbers when $a$ is varying \cite{Israel2015,Bonami_Karoui_17,Kulikov_24}. One of the best results, which is at the same time completely uniform, sharp and explicit, was recently obtained by Karnik, Romberg and Davenport \cite{Karnik_Romberg_Davenport}.

\begin{theorem}\label{Karnik}
For all $0<\eps<1/2$ and all bounded intervals $A$ and $B$ we have
\[
\Lambda_\eps(A,B) \leq \frac{2}{\pi^2} \log(50 |A||B|+25) \log\left(\frac{5}{\eps(1-\eps)}\right) + 7.
\]
\end{theorem}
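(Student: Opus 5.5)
Since this is the result of Karnik, Romberg and Davenport \cite{Karnik_Romberg_Davenport}, I would reconstruct its proof along the following lines rather than derive it from the earlier statements of the paper.

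\emph{Reduction.} First normalize by an affine change of variables to $A=[0,c]$, $B=[0,1]$, where $c=|A||B|$. Then apply the $S-S^2$ trick from the proof strategy:
\[
S-S^2=T^*T,\qquad T=P_{A^c}Q_BP_A,
\]
so $\Lambda_\eps(A,B)$ is the number of singular values of $T$ exceeding $\sqrt{\eps(1-\eps)}$. Split $A^c=(-\infty,0)\cup(c,\infty)$ to get $T=T_-+T_+$. The two pieces are unitarily equivalent by the reflection $x\mapsto c-x$ composed with a modulation. By Weyl's inequality $\sigma_{j+k-1}(T_-+T_+)\le\sigma_j(T_-)+\sigma_k(T_+)$, it suffices to count singular values of $T_+$ above $\tfrac12\sqrt{\eps(1-\eps)}$ and double the count. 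This doubling is the source of the leading constant $\tfrac{2}{\pi^2}$.

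\emph{Structure of $T_+$.} The kernel of $T_+$ is
\[
\frac{e^{2\pi i(x-y)}-1}{2\pi i(x-y)},\qquad x>c,\ y\in[0,c].
\]
Substitute $x=c+u$, $y=c-v$ with $u>0$ and $0<v<c$.

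On the near-diagonal region $u+v\lesssim1$ the kernel is an entire function of $u+v$. Expanding it in a Taylor (or Chebyshev) series shows that this part has singular values decaying geometrically with a rate independent of $c$. It therefore contributes only $O(\log\frac1\eps)$ singular values; this term, together with the rounding in the Weyl step, accounts for the additive constants.

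On the remaining region, write the kernel as the difference of two terms:
\[
e^{2\pi i u}\,\frac{1}{2\pi i(u+v)}\,e^{2\pi i v}\;-\;\frac{1}{2\pi i(u+v)}.
\]
Each term is a unitary conjugate of the truncated Carleman operator with kernel $\frac{1}{u+v}$, restricted to $v\in[1,c]$. This costs one more application of Weyl's inequality, and the factors $50$ and $25$ inside the logarithm absorb that splitting.

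\emph{Carleman operator.} Pass to logarithmic variables $u=e^s$, $v=e^t$ and use the unitary $L^2(\R_+,du)\to L^2(\R,ds)$ given by $f\mapsto e^{s/2}f(e^s)$. Under this map the kernel $\frac1{u+v}$ becomes the convolution kernel $\frac{1}{2\cosh((s-t)/2)}$, whose Fourier transform is $\frac{\pi}{\cosh(\pi\xi)}$ up to normalization. The variable $t$ is now confined to an interval of length $\log c$.

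The number of singular values of such a convolution restricted to an interval of length $L$ that exceed $\delta$ is at most
\[
\frac{L}{2\pi}\,\bigl|\{\xi:\tfrac{1}{\cosh\pi\xi}>\delta\}\bigr|+O(1)\approx\frac{L}{\pi^2}\log\frac1\delta.
\]
I would establish this by a Landau-type argument: compare with the operator $P_{[0,L]}Q_{\{|\hat k|>\delta\}}P_{[0,L]}$, using a one-dimensional interval-times-interval bound (Landau's $N_{1/2}$ estimate). Alternatively, I would use Zolotarev-number bounds for Cauchy-like operators between separated intervals, as in \cite{Karnik_Romberg_Davenport}. With $L=\log(\text{const}\cdot c)$ and $\delta\asymp\sqrt{\eps(1-\eps)}$, this yields the count $\frac1{\pi^2}\log(\text{const}\cdot c)\log\frac{\text{const}}{\eps(1-\eps)}$ for $T_+$, and doubling over $T_\pm$ gives the stated main term.

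\emph{Main obstacle.} The difficulty is not the order of magnitude but obtaining the sharp constant $\frac{2}{\pi^2}$ together with the explicit constants $50$, $25$ and $7$, uniformly in all $c$ and $\eps$. This requires an explicit, nonasymptotic singular-value bound for the truncated Carleman operator. I would obtain it through the displacement structure: the matrix with entries $\frac1{x_i+y_j}$ satisfies a rank-one Sylvester equation. Beckermann–Townsend Zolotarev bounds for the intervals $[1,c]$ and $[1,\infty)$ then give
\[
\sigma_{k+1}\le 4\,\exp\!\bigl(-\pi^2k/\log(4c)\bigr),
\]
and these estimates would be transferred to the continuous operator by discretization.
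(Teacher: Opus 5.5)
\emph{Note:} the paper does not prove this statement. It is quoted from \cite{Karnik_Romberg_Davenport}, and the paper only indicates the argument, in the proof-strategy subsection and in Section~5. There $T=P_{A^c}Q_BP_A$ is written as a sum of three operators, and Weyl's inequality is applied with the threshold divided by $3$. The far pieces have displacement rank $2$ (their commutator with multiplication by $x$ has rank $2$), and Zolotarev-number bounds are used. Your outline has the same architecture, so the question is whether it closes. As written it does not.

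\textbf{The near/far split fails as written.} If the near region is $\{u+v\lesssim 1\}$, you are cutting the kernel along an antidiagonal. This produces a Hankel integral operator with discontinuous kernel function $F(w)1_{[0,a]}(w)$. Such operators have only polynomially decaying singular values: for $F\equiv 1$ it is a reflected Volterra operator with $\sigma_n\asymp 1/n$. So this piece would contribute far more than $O(\log\frac1\eps)$ singular values. If instead you mean the square $u,v<1$, then the strip $\{0<v<1,\ u>1\}$ belongs to neither of your pieces. No Taylor expansion handles it, since $u$ is unbounded there. The split must be a product split in $y$ alone, $T_+=T_+P_{[c-1,c]}+T_+P_{[0,c-1]}$. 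The first piece satisfies $\sigma_n(T_+P_{[c-1,c]})\le\lambda_n([0,1],[0,1])^{1/2}$. On the second piece $x-y\ge 1$, so the two Cauchy kernels live on the separated sets $[0,c-1]$ and $[c,\infty)$, with cross-ratio $\gamma=c$.

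\textbf{The explicit constants are not derived, and they are the content of the theorem.} Your two routes and your accounting each have a problem.

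The Landau-type route cannot give a uniform $O(1)$ error. The symbol $\pi/\cosh(\pi\xi)$ is not an indicator. Majorizing $|\hat k|^2$ by $\|\hat k\|_\infty^2 1_E+\eta^2$ with $E=\{|\hat k|>\eta\}$ forces you to count eigenvalues of $P_{[0,L]}Q_EP_{[0,L]}$ above a threshold of order $\delta^2$. At that threshold the correction to the main term is of order $\log\frac1\delta\,\log(L\log\frac1\delta)$, not $O(1)$. Moreover, a non-asymptotic bound for it is essentially the statement you are trying to prove.

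On the Zolotarev route there are two issues:
\begin{itemize}
\item Beckermann--Townsend give $Z_k\le 4\exp\bigl(-\pi^2k/\log(16\gamma)\bigr)$, which here means $\log(16c)$, not $\log(4c)$.
\item Their result is about matrices, so the passage to the integral operator has to be carried out, not asserted.
\end{itemize}

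Your account of where the constants come from is also inconsistent:
\begin{itemize}
\item A Weyl split multiplies the count; it cannot be absorbed into the $50$ and $25$ inside the logarithm. In fact $\frac{2}{\pi^2}=\frac{1}{\pi^2}\cdot 2\cdot 2\cdot\frac12$. The factors come from one Cauchy piece, the modulated/unmodulated pair, $T_\pm$, and $\delta^2=\eps(1-\eps)$ respectively.
\item Each split also shrinks the threshold per piece, which changes the constant $5$ in $\log\frac{5}{\eps(1-\eps)}$.
\item The near piece contributes a number of singular values growing like $\log\frac1\eps/\log\log\frac1\eps$. That is unbounded, so it cannot be part of the additive $7$. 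It must instead be absorbed into the main term using the slack between $\log(50c+25)$ and $\log(16c)$, which you never check.
\end{itemize}

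What you have is a correct heuristic for the leading term, not a proof of the stated inequality.
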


Note that the dependence on $c$ and on $\eps(1-\eps)$ in this result is logarithmic just like in Theorem \ref{Slepian}, while being completely explicit in all constants. In particular, combining Theorem \ref{Karnik} with Theorem \ref{Slepian} is already enough to establish our version of the enhanced area law for the two-term asymptotics of ${\rm{Tr}} f(S_{A,B})$
$${\rm{Tr}} f(S_{A,B}) =f(1)|A||B|+ \frac{1}{\pi^2} \log(|A||B|) \int_0^1 \frac{f(\theta) - \theta f(1)}{\theta(1-\theta)} \, d\theta + o(\log(|A||B|))$$
for extremely general functions $f$, see Theorem \ref{good area law} for the precise statement.

This result is essentially sharp for a very wide range of values of $\eps$. However, for very small values of $\eps$ (almost exponentially small in $|A||B|$) it is possible to get a better bound. For the eigenvalues $\lambda_n(c)$ with $n < c$ this was done by the first author \cites{kulikov_2026}. 

\begin{theorem}\label{Nazarov}
There exist numbers $c_0,B, \eta, \mu > 0$ such that for $c > c_0$ and $n < c-B\log^2(c)$ we have
\begin{equation}\label{Nazarov equation}    
\exp\left(-\mu\frac{c-n}{\log(\frac{2c}{c-n})}\right) < 1-\lambda_n(c) <\exp\left(-\eta\frac{c-n}{\log(\frac{2c}{c-n})}\right).
\end{equation}
\end{theorem}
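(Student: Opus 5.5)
The plan is to pass through the $S-S^2$ reformulation and Courant--Fischer minimax. Since $\lambda_n(c)=\lambda_n([0,1],[0,c])$, the quantity $1-\lambda_n(c)$ is the minimax of
$$\frac{\|f\|^2_{L^2(\R\setminus[0,c])}}{\|f\|^2_{L^2(\R)}}$$
over $n$-dimensional subspaces of the Paley--Wiener space $PW$ of functions with $\hat f$ supported in $[0,1]$. Write $m=c-n\ge B\log^2 c$ for the ``slack''. Both inequalities in \eqref{Nazarov equation} then become statements about how well $n$ linearly independent band-limited functions can be concentrated on an interval of length $n+m$.

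\textbf{Upper bound on $1-\lambda_n$.} I would construct an explicit $n$-dimensional subspace.
\begin{itemize}
\item Shrink the band slightly to $[\delta,1-\delta]$, with $\delta$ to be optimized.
\item Take the span of integer-spaced translates $\phi(x-k)$, with $k$ ranging over about $n$ consecutive points in the middle of $[0,c]$. Here $\phi=\mathrm{sinc}$-type kernel times a multiplier $\psi$ whose Fourier transform is supported in $[-\delta,\delta]$.
\item Choose $\psi$ as a Beurling--Malliavin/Ingham type majorant-compatible function: a band-limited function of type $\delta$ with $|\psi(t)|\lesssim \exp(-\eta\,\delta |t|/\log(e+\delta|t|))$, or an infinite product of sincs, which achieves the optimal non-quasianalytic decay.
\end{itemize}
The translates form a Riesz sequence with bounds uniform in $n$, because their Fourier transforms are exponentials times a fixed nonvanishing function on the band. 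So the outside energy of any combination is controlled by the sum of the individual tails, which is at most $n\cdot\sup_{|t|\ge m/2}|\phi(t)|^2$. The remaining work is to balance the loss of roughly $2\delta c$ translates against the decay at distance of order $m$. Choosing $\delta\asymp m/c$ up to logarithms, together with a product multiplier tuned to the scale $c/m$, should give $\exp(-\eta m/\log(2c/m))$. The condition $m\ge B\log^2c$ is exactly what absorbs the polynomial factor $n$.

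\textbf{Lower bound on $1-\lambda_n$.} This is the main obstacle. By minimax it suffices to show that every $n$-dimensional subspace $V\subset PW$ contains $f\neq0$ with $\int_{\R\setminus[0,c]}|f|^2\ge e^{-\mu m/\log(2c/m)}\|f\|^2$. I would proceed as follows.
\begin{enumerate}
\item Use the $n$ degrees of freedom to impose $n-1$ linear conditions, choosing $f\in V$ to vanish at $n-1$ points of $[0,c]$ placed so that about $m$ points' worth of ``room'' is left.
\item Regard $f$ as an entire function of exponential type $\pi$ (after modulation). If $f$ were tiny off $[0,c]$, then the vanishing conditions plus smallness outside force $f$ to be tiny everywhere.
\item Quantify this with a two-constants/harmonic measure estimate in the slit plane $\Cm\setminus([0,c]\text{-complement})$, or equivalently by dividing by the polynomial of zeros and applying log-subharmonicity. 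The harmonic measure of the exterior part seen from the bulk behaves like $m/\log(c/m)$ in the exponent, which matches the claimed rate.
\end{enumerate}
The delicate point is controlling the quotient $f/\prod(z-x_j)$, which has type reduced by an amount of order $n/c$. I would first try choosing the $x_j$ on a slightly compressed lattice so that the comparison function is an explicit Gamma-function ratio with computable growth.

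Finally, I would check that the constants $B,\eta,\mu$ can be taken uniform once $c>c_0$.
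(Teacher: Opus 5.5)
The paper does not prove this statement. It quotes it from \cite{kulikov_2026}, remarking only that the ``$\lambda_n$ close to $1$'' half follows from Theorems \ref{Slepian} and \ref{Karnik}, as in the derivation of \eqref{N 1-e lower}, when $\eps\ge e^{-c^{7/8}}$, i.e.\ when $\log\frac{2c}{m}\asymp\log c$. So your sketch has to stand on its own, and each half has a genuine gap.

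\textbf{Upper bound on $1-\lambda_n$.} A single window with a uniform spectral margin $\delta$ cannot produce the logarithm.

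\emph{Decay is capped by the margin.} If $\operatorname{supp}\hat\psi\subset[-\delta,\delta]$ and $\psi(0)=1$, apply the maximum principle to $\log|\psi(z)|-2\pi\delta\,\mathrm{Re}\sqrt{R^2-z^2}$ in $\Cm\setminus\big((-\infty,-R]\cup[R,\infty)\big)$. This gives $\sup_{|t|\ge R}|\psi(t)|\ge e^{-2\pi\delta R}$. The window $\phi$ itself should do no better: the same argument on the Fourier side applies, since $\hat\phi$ must climb from $0$ to size $\asymp1$ within $O(\delta)$ of the band edges.

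\emph{Integer spacing is not Riesz.} The Gram matrix of integer translates is a Toeplitz section with symbol $|\hat\phi|^2$, which vanishes at the band edges, so the Riesz bounds are not uniform in $n$. You need spacing about $(1-2\delta)^{-1}$, which is your ``loss of $2\delta c$ translates''.

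\emph{The resulting cap.} Fitting $n=c-m$ such translates with margin $R$ into $[0,c]$ forces $2\delta n+2R\lesssim m$, hence $\delta R\lesssim m^2/c$. So the translate nearest an endpoint already has outside fraction $\ge e^{-Cm^2/c}$ up to polynomial factors. This is far larger than $e^{-\eta m/\log(2c/m)}$ once $m=o(c)$.

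\emph{What is needed.} The logarithm comes from grading the slack. A function at distance $x$ from the nearer endpoint needs margin $\approx K/x$ to gain $e^{-K}$, and $\int_m^c K\,\frac{dx}{x}\approx K\log\frac cm$ must fit into $m$. There is also a non-circular shortcut inside this paper. The $d=1$ case of \eqref{eps not tiny} is proved for $\eps>\alpha^{-c}$ via Lemma \ref{One dimensional operators}, without Theorem \ref{Nazarov}. Combine it with $N_{1/2}=c+O(\log c)$ from Theorem \ref{Slepian}, and take $\log\frac1\eps=\eta m/\log\frac{2c}{m}$ with $\eta$ small. This gives $N_{1-\eps}\ge c-m$, exactly as in the derivation of \eqref{N 1-e lower}.

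\textbf{Lower bound on $1-\lambda_n$.} Taking $f\in V$ vanishing at $n-1$ nodes and running a two-constants argument in $\Omega=\Cm\setminus(\R\setminus[0,c])$ is a sound framework. The decisive ingredient, the placement of the nodes, is missing, and the lattice you suggest provably fails.

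\emph{Counterexample for equally spaced nodes.} Take $x_j=jh$ with $h=c/n$, and let $g(x)=e^{\pi i x}\sin(\pi x/h)\,w(x)$. Here $\operatorname{supp}\hat w\subset[-\frac{m}{2c},\frac{m}{2c}]$ and $\int_{\R\setminus[0,c]}|w|^2\le e^{-\kappa m}\|w\|^2$; a power of a sinc centred at $c/2$ will do. Then $\operatorname{supp}\hat g\subset[0,1]$ and $g(x_j)=0$. Since $\widehat{|w|^2}$ vanishes at $\pm\frac1h$ when $m<\frac c2$, we get $\int_\R\sin^2(\pi x/h)|w|^2=\frac12\|w\|^2$. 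Hence $g$ has outside fraction $\le 2e^{-\kappa m}$, and these vanishing conditions can yield at most $1-\lambda_n\ge e^{-Cm}$, losing exactly $\log\frac{2c}{m}$.

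The progression works in Section 5.2 because there the node density exceeds the Nyquist density. Before the plunge the nodes are in deficit, and where that deficit sits is everything. Two smaller corrections: dividing by $\prod(z-x_j)$ does not change the exponential type, and the harmonic measure of the rays in $\Omega$ is identically $1$, so neither is the right quantity.

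\emph{The right quantity.} The exact identity $\int_0^cG_\Omega(x,t)\,dt=\pi\sqrt{x(c-x)}$ holds for the Green function of $\Omega$. Using it, what you must control is
\[
\sup_{x\in[0,c]}\Big(\pi\sqrt{x(c-x)}-\sum_j G_\Omega(x,x_j)\Big)\approx\sup_{x\in[0,c]}\int_0^c G_\Omega(x,t)\,\beta(t)\,dt ,
\]
where $\beta=1-\rho$ is the deficit of the node density and $\int\beta\approx m$.
\begin{itemize}
\item For a uniform deficit, or a node-free end layer of width $\asymp m$, this is $\asymp m$.
\item It becomes $O(m/\log\frac{2c}{m})$ for a graded profile, e.g.\ $\beta(t)=\min\{1,a/d(t)\}$ with $d(t)$ the distance to the nearer endpoint and $a\asymp m/\log\frac{2c}{m}$. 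Indeed, near $0$ we have $G_\Omega(x,t)\le\log\big|\frac{\sqrt x+\sqrt t}{\sqrt x-\sqrt t}\big|$, so each end contributes at most $a\int_0^\infty\log\big|\frac{\sqrt x+\sqrt t}{\sqrt x-\sqrt t}\big|\frac{dt}{t}=\pi^2a$.
\end{itemize}
The remaining losses should be $O(\log c)$, which is what the hypothesis $m\ge B\log^2c$ absorbs. They come from discretizing the node sum, passing from $L^2$-smallness on the rays to pointwise bounds, and passing from $\|f\|_2=1$ to a point where $|f|\gtrsim c^{-1/2}$.
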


For $n > c$ not only is it possible to obtain better estimates, but even an asymptotic formula for $\lambda_n(c)$ with a tiny relative error was obtained by Bonami and Karoui \cite{Bonami_Karoui_17}.
\begin{theorem}
For all $n \ge c \ge 10$ we have
$$\lambda_n(c) = \exp\left(-\frac{\pi^2\left(n+\frac{1}{2}\right)}{2} \int_{\Phi\left(\frac{c}{n+\frac{1}{2}}\right)}^1 \frac{1}{tE(t)^2}dt+O(\log(n))\right),$$
where $E(t) = \int_0^1 \sqrt{\frac{1-t^2x^2}{1-x^2}}dx$ is the elliptic integral of the second kind and $\Phi$ is the inverse of the function $t\to \frac{t}{E(t)}$. 	
\end{theorem}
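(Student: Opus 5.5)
The plan is to go through the prolate spheroidal differential operator, which commutes with $S_{[0,c],[0,1]}$. After rescaling to the symmetric interval we work with $L_c\psi = -\big((1-x^2)\psi'\big)' + (\pi c/2)^2x^2\psi$ on $[-1,1]$, with $\pi c/2$ the bandwidth parameter in the normalization used here. Let $\psi_{n,c}$ be its eigenfunctions, normalized in $L^2[-1,1]$, with eigenvalues $\chi_n(c)$. These are simultaneously the eigenfunctions of the time-frequency localization operator, and $\lambda_n(c)$ is the eigenvalue belonging to $\psi_{n,c}$. The starting point is the classical variational identity obtained by differentiating the integral eigen-equation in $c$:
\[
\partial_c \log \lambda_n(c) = \frac{2\,\psi_{n,c}(1)^2}{c}
\]
(up to the normalization constant). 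Integrating this from $c$ up to a reference bandwidth $c^*$ comparable to $n$ expresses $\log\lambda_n(c)$ as $-\int_c^{c^*} 2\psi_{n,\tau}(1)^2\,\tau^{-1}d\tau$ plus $\log\lambda_n(c^*)$. We take $c^* \approx \frac{2}{\pi}(n+\tfrac12)$, the transition point where $\lambda_n$ is of order one; Theorem \ref{Slepian} and Theorem \ref{Karnik} then control $\log\lambda_n(c^*)$ up to $O(\log n)$.

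The second step locates $\chi_n(\tau)$ for $\tau \le c^*$, which is the regime where $\chi_n > (\pi\tau/2)^2$ and the turning points lie outside $[-1,1]$. Here we use a WKB / Bohr--Sommerfeld quantization for the Sturm--Liouville problem, proved rigorously by a Prüfer transformation. With phase $\theta$ satisfying $\theta' = \sqrt{(\chi - (\pi\tau/2)^2x^2)/(1-x^2)}$ up to lower-order terms, counting zeros gives
\[
n+\tfrac12 = \frac{2}{\pi}\int_0^1\sqrt{\frac{\chi_n-(\pi\tau/2)^2x^2}{1-x^2}}\,dx + O(1/n) = \frac{2}{\pi}\sqrt{\chi_n}\,E\!\left(\frac{\pi\tau}{2\sqrt{\chi_n}}\right)+O(1/n).
\]
Setting $t = \pi\tau/(2\sqrt{\chi_n})$, this becomes $t/E(t) = \pi^2\tau/(4(n+\tfrac12))$ up to a small error. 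Hence $t = \Phi(\cdot)$ after absorbing constants, which explains where $\Phi$ enters the formula.

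The third and hardest step is a sharp asymptotic for the boundary value $\psi_{n,\tau}(1)^2$ with relative error $1+O(1/n)$. The error must be uniform in $\tau$, including near the transition $t \to 1$ where the WKB approximation degenerates, and the errors must integrate to only $O(\log n)$. Away from $x=\pm1$ we use the WKB amplitude $(\chi-(\pi\tau/2)^2x^2)^{-1/4}(1-x^2)^{-1/4}$, normalized so that its $L^2$ mass matches $\int_0^1 dx/\big(\sqrt{1-x^2}\sqrt{\chi-\cdot}\big) = K(t)/\sqrt{\chi}$. Near the regular singular point $x=1$ we match it to the Bessel-type local solution $J_0$. Combining the two should give $\psi(1)^2 \approx \frac{\pi\sqrt{\chi_n}}{4}\,\frac{1}{E(t)}\cdot(\text{factor})$. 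After the substitution $\tau \mapsto t$, using $d\tau/\tau = d\log(t/E(t))$ and the Legendre-type identity $\frac{d}{dt}(t/E(t)) = K/E^2$, the integrand should collapse to $\frac{\pi^2(n+\frac12)}{2}\frac{dt}{tE(t)^2}$, running from $t=\Phi(c/(n+\frac12))$ to $t=1$.

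The main obstacle I expect is the uniformity of this matching near $t=1$. There I would first try to cut the integral at $1-t \approx n^{-2/3}$ and bound the remaining short range crudely by $O(\log n)$, using monotonicity of $\lambda_n$ in $c$. A secondary issue is the range $c \ge 10$ with $n$ only slightly larger than $c$. In that range the whole integral is $O(\log n)$ and the claim reduces to the transition estimate for $\lambda_n(c^*)$.
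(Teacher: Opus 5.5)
The paper does not prove this statement; it is quoted from \cite{Bonami_Karoui_17}, so your sketch has to stand on its own. Your architecture is the right one: the exact identity $\partial_c\log\lambda_n(c)=2\psi_{n,c}(1)^2/c$, a quantization condition for $\chi_n$ through $E$, and an asymptotic for $\psi_{n,c}(1)^2$. However, the step you propose for $t\to1$ fails.

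\textbf{The cut at $1-t\approx n^{-2/3}$.} Near $t=1$ the main term has density $\frac{\pi^2(n+\frac12)}{2}\frac{1}{tE(t)^2}\approx\frac{\pi^2}{2}n$ in $t$. So the range $1-t\le n^{-2/3}$ carries $\asymp n^{1/3}$ of $\log\lambda_n$, and at your cut point $\log\lambda_n=-\Theta(n^{1/3})$. Monotonicity in $c$ only gives the trivial bound $\lambda_n\le\lambda_n(c^*)\le 1$, so no crude argument yields $O(\log n)$ there.

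\textbf{The relevant scale.} The exponent $2/3$ is an Airy scaling and is not the relevant one. For $t<1$ the turning points $\pm1/t$ lie outside $[-1,1]$ and merge with the regular singular points $\pm1$. With $x=1-u$ and $\gamma=\pi\tau/2$, the equation is to leading order $(2u\psi_u)_u+(\chi_n-\gamma^2+2\gamma^2u)\psi=0$. The rescaling $u=s/\gamma$ shows that the controlling parameter is $(\chi_n-\gamma^2)/\gamma\asymp n(1-t)$. The $J_0$ matching is justified only when this parameter is large, and so is the endpoint phase $\frac{\pi}{4}$ behind the $+\frac12$ in your quantization rule.

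\textbf{What is needed.} You need the WKB--Bessel asymptotic for $\psi_{n,\tau}(1)^2$ uniformly down to $1-t\asymp 1/n$. Its relative error (e.g.\ $O(1/(n(1-t)))$) must integrate against $n\,dt$ to $O(\log n)$. Only the final window $1-t\lesssim 1/n$, which is a $\tau$-interval of length $O(\log n)$, can be treated crudely via $\psi_{n,\tau}(1)^2=O(n)$. This uniform estimate is the actual content of the theorem, and your proposal does not supply it.

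Your ``secondary issue'' is not a separate case. For $n$ close to $c$ you need a \emph{lower} bound on $\lambda_n(c)$, which the value at $c^*$ plus monotonicity cannot give, so it rests on the same analysis near $t=1$.

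\textbf{The constants.} Several constants are wrong, and they cannot be ``absorbed'' because they sit inside $\Phi$ and multiply the $n$-linear main term.
\begin{itemize}
\item With $t=\gamma/\sqrt{\chi_n}$, the rule $\frac{2}{\pi}\sqrt{\chi_n}E(t)=n+\frac12$ gives $t/E(t)=\tau/(n+\frac12)$, not $\pi^2\tau/(4(n+\frac12))$. This is exactly why the lower limit is $\Phi(c/(n+\frac12))$.
\item Accordingly, the reference point must be $c^*=n+\frac12$. There $t=\Phi(1)=1$, and Theorem \ref{Slepian} gives $\lambda_n(c^*)\ge\frac13$ for large $n$. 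At your $c^*=\frac{2}{\pi}(n+\frac12)$ one has $\log\lambda_n(c^*)\asymp -n$, which is not $O(\log n)$.
\item The boundary value is $\psi_{n,\tau}(1)^2\approx\frac{\pi\sqrt{\chi_n}}{2K(t)}$, with $K$ rather than $E$. Your own normalization gives the amplitude $A^2=\sqrt{\chi_n}/K(t)$. Matching $A(2ku)^{-1/4}\cos(\sqrt{2ku}-\frac{\pi}{4})$, with $k=\chi_n-\gamma^2$, against $BJ_0(\sqrt{2ku})$ gives $\psi(1)^2=B^2=\frac{\pi}{2}A^2$.
\end{itemize}
It is this $K$ that cancels the one in $\frac{d}{dt}\frac{t}{E(t)}=\frac{K(t)}{E(t)^2}$. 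Since $\frac{d\tau}{\tau}=\frac{K}{tE}\,dt$, one gets $\frac{2\psi(1)^2}{\tau}\,d\tau=\frac{\pi^2(n+\frac12)}{2}\,\frac{dt}{tE(t)^2}$. With $E$ and an unspecified factor, the integrand does not collapse to this form.
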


Using known asymptotics of the elliptic integral near $0$ and $1$ we can restate it in the following (much cruder) form which has an advantage of not involving any special functions.
\begin{theorem}\label{Bonami2}
There exist numbers $c_0,B, \eta, \mu > 0$ such that for $c > c_0$, $2c > n > c+B\log^2(c)$ we have
$$\exp\left(-\mu\frac{n-c}{\log(\frac{2c}{n-c})}\right) < \lambda_n(c) <\exp\left(-\eta\frac{n-c}{\log(\frac{2c}{n-c})}\right)$$
while for $n \ge 2c$ we have
$$\exp\left(-\mu n \log\left(\frac{n}{c}\right)\right) < \lambda_n(c) < \exp\left(-\eta n \log\left(\frac{n}{c}\right)\right).$$	
\end{theorem}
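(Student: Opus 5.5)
The plan is to derive both regimes directly from the Bonami--Karoui formula stated just above, by elementary asymptotics of the function
$$F(s) = \int_{\Phi(s)}^1 \frac{dt}{tE(t)^2}, \qquad s = \frac{c}{n+\frac12}\in(0,1),$$
so that $\log\frac{1}{\lambda_n(c)} = \frac{\pi^2}{2}(n+\tfrac12)F(s) + O(\log n)$. First I note the basic properties. $E$ is continuous and decreasing on $[0,1]$ with $E(0)=\pi/2$ and $E(1)=1$. The map $\psi(t)=t/E(t)$ is an increasing bijection of $[0,1]$, so $\Phi=\psi^{-1}$ is well defined. Hence $F$ is continuous and strictly positive on $(0,1)$, with $F(s)\to 0$ as $s\to1$ and $F(s)\to\infty$ as $s\to0$. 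Consequently, on any compact range $s\in[\delta,1-\delta]$ we have $F(s)\asymp 1$, and it only remains to analyse the two endpoints.

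\emph{Regime $s\to 0$ (i.e.\ $n\ge 2c$, so $s\le 1/2$ up to the harmless shift by $\frac12$).} Since $E(t)=\frac{\pi}{2}+O(t^2)$, we get $\Phi(s) = \frac{2}{\pi}s(1+O(s^2))$ and
$$F(s)=\frac{4}{\pi^2}\log\frac{1}{\Phi(s)}+O(1)=\frac{4}{\pi^2}\log\frac{1}{s}+O(1).$$
Combined with compactness on $s\in[\delta,1/2]$, this yields $F(s)\asymp \log\frac{2}{s}\asymp\log\frac{n}{c}$ for $n\ge 2c$. The exponent is therefore $\asymp n\log\frac nc$. Because $\log\frac nc\ge\log 2$, this exponent is at least a constant times $n$, which dominates the $O(\log n)$ error once $c>c_0$. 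Adjusting the constants then gives the two bounds with $\mu,\eta$.

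\emph{Regime $s\to1$ (i.e.\ $c+B\log^2 c<n<2c$).} This is the step needing the most care, because of the logarithmic singularity of $E$ at $t=1$. I would use $E'(t)=(E(t)-K(t))/t$ together with $K(t)=\log\frac{4}{\sqrt{1-t^2}}+o(1)$. These give
$$\psi'(t)=\frac{1}{E}-\frac{tE'}{E^2}\asymp \log\frac{2}{1-t}\quad\text{as } t\to1,$$
and hence $1-\psi(t)\asymp(1-t)\log\frac{2}{1-t}$. Inverting this, which is legitimate since $x\mapsto x\log\frac2x$ is monotone and doubling near $0$, gives
$$1-\Phi(s)\asymp\frac{1-s}{\log\frac{2}{1-s}}.$$
Since the integrand $1/(tE^2)$ is $\asymp1$ near $t=1$, it follows that $F(s)\asymp 1-\Phi(s)$. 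Now $1-s=\frac{n+\frac12-c}{n+\frac12}\asymp\frac{n-c}{c}$ for $c<n<2c$, because $n-c\ge B\log^2c\ge1$ makes the $\frac12$ negligible. Therefore
$$(n+\tfrac12)F(s)\asymp \frac{n-c}{\log\frac{2c}{n-c}}.$$
The last point is to absorb the $O(\log n)=O(\log c)$ error. Since $\log\frac{2c}{n-c}\le\log(2c)$, the main term is at least a constant times $\frac{B\log^2c}{\log 2c}\gtrsim B\log c$. Choosing $B$ large in terms of the implicit constants makes the error at most half the main term, and this gives both inequalities with suitable $\eta,\mu$. I expect the main obstacle to be only this bookkeeping: the uniform two-sided control of $\psi'$ near $1$ and the inversion to $\Phi$. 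Everything else is compactness and the explicit behaviour of $E$ near $0$.
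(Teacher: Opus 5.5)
Your proposal is correct and takes the same route as the paper, which obtains this statement in one line from the Bonami--Karoui formula ``using known asymptotics of the elliptic integral near $0$ and $1$''; you supply those asymptotics and the absorption of the $O(\log n)$ error explicitly. There is one harmless slip: since $E(0)=\pi/2$, one has $\Phi(s)=\frac{\pi}{2}s(1+O(s^2))$ rather than $\frac{2}{\pi}s(1+O(s^2))$, but only $\log\frac{1}{\Phi(s)}=\log\frac{1}{s}+O(1)$ is used, so nothing changes.
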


To actually apply these estimates, we want to reformulate them in terms of counting functions as well. Note that for the lower bound on $N_{1-\eps}([0,1],[0,c])$ and the upper bound on $N_{\eps}([0,1],[0,c])$ we can combine Theorem \ref{Nazarov} and Theorem \ref{Bonami2}, respectively, with Theorem \ref{Karnik} and Theorem \ref{Slepian} to get estimates valid for all $\eps < \frac{1}{2}$. For the remaining bounds we do not have an analogue of Theorem \ref{Karnik} at our disposal, so they will only work for $\eps < c^{-A}$ for a large enough constant $A$. Specifically, we have the following six assertions. Let $\kappa, c_1 > 0$ be large enough constants and $c_2 > 0$ be a small enough constant, independent of $c$ and $\eps$, and we assume that $c > c_1$. 

Whenever $2^{-\kappa c} < \eps < \frac{1}{2}$, we have
\begin{equation}\label{N 1-e lower}
N_{1-\eps}([0,c],[0,1]) \ge c - c_1 \log\left(\frac{1}{\eps}\right) \log\left(\frac{\kappa c}{\log(\frac{1}{\eps})}\right)
\end{equation}
and
\begin{equation}\label{N e upper}
N_\eps([0,c],[0,1]) \le c +c_1 \log\left(\frac{1}{\eps}\right) \log\left(\frac{\kappa c}{\log(\frac{1}{\eps})}\right).
\end{equation}

For $2^{-\kappa c} < \eps < c^{-A}$ we have complementary bounds as well
\begin{equation}\label{N 1-e upper}
N_{1-\eps}([0,c],[0,1]) \le c  - c_2 \log\left(\frac{1}{\eps}\right) \log\left(\frac{\kappa c}{\log(\frac{1}{\eps})}\right)
\end{equation}
and
\begin{equation}\label{N e lower}
N_\eps([0,c],[0,1]) \ge c + c_2 \log\left(\frac{1}{\eps}\right) \log\left(\frac{\kappa c}{\log(\frac{1}{\eps})}\right).
\end{equation}

For $\eps \le 2^{-\kappa c}$ we have
$$N_{1-\eps}([0,c],[0,1]) = 0$$
and
$$N_\eps([0, c],[0, 1]) \asymp \frac{\log\left(\frac{1}{\eps}\right)}{\log\left(\frac{4\log\left(\frac{1}{\eps}\right)}{c}\right)}.$$

Let us, as an illustration, deduce \eqref{N 1-e lower} from Theorems \ref{Slepian}, \ref{Karnik} and \ref{Nazarov}, and leave the other five cases to the interested reader. 

For $\frac{1}{100} \le \eps < \frac{1}{2}$ we have $N_{1-\eps}([0,c],[0,1])\ge N_{99/100}([0,c],[0,1]) = c + O(\log c)$ by Theorem \ref{Slepian}. Next, we notice that if $n_c = [c]$, where $[x]$ is the largest integer not greater than $x$, and $c$ is large enough then $\frac{1}{3} \le \lambda_{n_c}(c) \le \frac{2}{3}$. Indeed, by Theorem \ref{Slepian} we have $N_{1/3}([0,c],[0,1]) = c + \frac{\log(2)}{\pi^2}\log(c) + o(\log(c))$ and $N_{2/3}([0,c],[0,1]) = c - \frac{\log(2)}{\pi^2}\log(c) + o(\log(c))$, thus $n_c$ does not satisfy $\lambda_n(c) > \frac{2}{3}$ but does satisfy $\lambda_n(c) \ge \frac{1}{3}$. 

If $c^{-A} \le \eps < \frac{1}{100}$ then $n_c$ is in the set $\{n: \eps < \lambda_n(c) \le 1-\eps\}$. Note also that this set is clearly a segment of integers. We therefore have $$N_{1-\eps}([0,1],[0,c])\ge n_c - |\{n: \eps < \lambda_n(c) < 1-\eps\}| = n_c - \Lambda_\eps([0, 1], [0, c]).$$
Applying Theorem \ref{Karnik} and $n_c \ge c$ for large enough $c$ we get
$$N_{1-\eps}([0,1],[0,c])\ge c - 100\log(c) \log \left(\frac{1}{\eps}\right).$$
If $c^{-A} \le \eps$ and $c$ is large enough then $\log\left(\frac{\kappa c}{\log(\frac{1}{\eps})}\right) \ge \frac{1}{2}\log(c)$, so we have to at most double the constant. In fact, in \cite{kulikov_2026} in the proof of Theorem \ref{Nazarov} the same argument was used for a much wider range of $\eps$, up to $\eps = \exp(-c^{7/8})$.

If $0 < \eps < c^{-A}$ then first of all $c_1 \log\left(\frac{1}{\eps}\right) \log\left(\frac{\kappa c}{\log(\frac{1}{\eps})}\right) \ge B\log^2(c)$. Thus, it suffices to show that for $n = \left[c-c_1 \log\left(\frac{1}{\eps}\right) \log\left(\frac{\kappa c}{\log(\frac{1}{\eps})}\right)\right]+1$ the right-hand side in \eqref{Nazarov equation} is at most $\eps$ (if $n\leq 0$ then the required estimate holds automatically). For large enough $c_1$ we clearly have $n < c - \frac{c_1}{2}\log\left(\frac{1}{\eps}\right) \log\left(\frac{\kappa c}{\log(\frac{1}{\eps})}\right)$. So, it is enough to show that for large enough $c_1$ we have
$$\log\left(\frac{1}{\eps}\right) \le\eta\frac{c_1\log\left(\frac{1}{\eps}\right) \log\left(\frac{\kappa c}{\log(\frac{1}{\eps})}\right)}{2\log\left(\frac{2c}{c_1\log\left(\frac{1}{\eps}\right) \log\left(\frac{\kappa c}{\log(\frac{1}{\eps})}\right)}\right)}.$$
Dividing by $\log\left(\frac{1}{\eps}\right)$ and putting $t=\frac{\kappa c}{\log\left(\frac{1}{\eps}\right)}$ we get an inequality
$$1\le \frac{\eta c_1}{2} \frac{\log(t)}{\log(t)-\log\log(t)-\log(\frac{c_1}{2})},$$
which is always true if $c_1 > \frac{2}{\eta}+2$. For \eqref{N 1-e upper} we would instead need to show that similar expression with $\mu$ and $c_2$ is at most $1$, and for this we would use that $\log \log(t) \le \frac{\log(t)}{2}$, say.
\section{Case of two boxes}
In this section we will prove Theorem \ref{two cubes}. The proof of Proposition \ref{two boxes} is similar so we leave its proof to the interested reader.

We begin with the following simple observation that will nevertheless be sufficient for the proof:
\begin{equation}\label{N trick}
N_{a^{1/d}}([0, c], [0,1])^d\le N_a([0,c]^d,[0,1]^d)\le N_a([0,c],[0,1])^d.
\end{equation}
Indeed, if the product of $d$ numbers, each of which is between $0$ and $1$, is larger than $a$ then each of them is also larger than $a$, which gives us an upper bound. On the other hand, if each of the numbers is larger than $a^{1/d}$ then their product is larger than $a$.

So, we can apply the estimates for $N_a([0,c],[0,1])$ from the previous section. Note that for $2^{-\kappa c} < \eps < \frac{1}{2}$ we always have  $\log\left(\frac{1}{\eps}\right) \log\left(\frac{\kappa c}{\log(\frac{1}{\eps})}\right) = O(c)$. When raising to the $d$'th power, we will use the following simple inequality: if $y = O(x)$ then $(x+y)^d = x^d + O(x^{d-1}y)$. We will only use it with $x = c$.

We begin with estimating $N_{1/2}([0,c]^d,[0,1])$ from \eqref{N trick}. We get
$$N_{2^{-1/d}}([0, c], [0,1])^d\le N_{1/2}([0,c]^d,[0,1]^d)\le N_{1/2}([0,c],[0,1])^d.$$

By Theorem \ref{Slepian} both $N_{1/2}([0,c],[0,1])$ and $N_{2^{-1/d}}([0,c],[0,1])$ are  $c + O(\log c)$. Raising this to the power $d$ we get
\begin{equation}\label{N 1/2}
N_{1/2}([0,c]^d,[0,1]^d) = c^d +O(c^{d-1}\log c).
\end{equation}

First, we are going to prove \eqref{Lambda pm, upper, two cubes}. For this we will use a lower bound on $N_{1-\eps}([0,c],[0,1])$ and an upper bound on $N_\eps([0,c],[0,1])$, respectively. For $\Lambda_\eps^+([0,c]^d,[0,1]^d)$ we get 
\begin{align*}
\Lambda_\eps^+([0,c]^d,[0,1]^d) &= N_{1/2}([0,c]^d,[0,1]^d) - N_{1-\eps}([0,c]^d,[0,1]^d)\\
&\le N_{1/2}([0,c]^d,[0,1]^d) - N_{(1-\eps)^{1/d}}([0,c],[0,1])^d.
\end{align*}
By Bernoulli's inequality we know that $(1-\eps)^{1/d} \le 1-\frac{\eps}{d}$ and therefore we have $N_{(1-\eps)^{1/d}}([0,c],[0,1]) \geq N_{1-\eps/d}([0,c],[0,1])$. Plugging in our bound we get for $2^{-\kappa c} < \eps < \frac{1}{2}$
$$\Lambda_\eps^+([0,c]^d,[0,1]^d) \le c^d + O(c^{d-1}\log c) - c^d + O\left(c^{d-1}\log\left(\frac{d}{\eps}\right) \log\left(\frac{\kappa c}{\log(\frac{d}{\eps})}\right)\right).$$
Canceling $c^d$, noting that the first big-O is dominated by the second and the fact that for $\eps < \frac{1}{2}$ we have $\log\left(\frac{d}{\eps}\right)\asymp \log\left(\frac{1}{\eps}\right)$ we get the desired upper bound after possibly increasing $\kappa$ to $\alpha_d$.

For $\Lambda_\eps^-([0,c]^d,[0,1]^d)$ we get by a similar reasoning for $2^{-\kappa c} < \eps < \frac{1}{2}$
$$\Lambda_\eps^-([0,c]^d,[0,1]^d) \le c^d + O\left(c^{d-1}\log\left(\frac{1}{\eps}\right)\log\left(\frac{\kappa c}{\log\left(\frac{1}{\eps}\right)}\right)\right)-c^d + O(c^{d-1}\log c).$$

Again, canceling $c^d$ and noting that the first big-O dominates the second one we get the desired result.

To prove \eqref{Lambda pm, lower, two cubes} we will instead employ an upper bound on $N_{1-\eps}([0,c],[0,1])$ and a lower bound on $N_\eps([0,c],[0,1])$. For this, just an inequality $(x+y)^d = x^d + O(x^{d-1}y)$ will not be enough, we will need that if $0\le y \le \frac{1}{2}x$ then $(x-y)^d \le x^d - c_d x^{d-1}y$ and $(x+y)^d \ge x^d + c_d x^{d-1}y$. Note that we can achieve $y \le \frac{1}{2}x$ by making {the constant $c_2$ in \eqref{N 1-e upper} and \eqref{N e lower}} smaller if necessary. The rest of the argument is essentially the same as in the proof of \eqref{Lambda pm, upper, two cubes}, the only changes are that we have to write inequalities explicitly instead of big-O and that in the case of $\Lambda^-_\eps([0,c]^d,[0,1])$ we have $\log\left(\frac{1}{\eps^{1/d}}\right)$ instead of $\log\left(\frac{1}{\eps}\right)$, but since they are proportional to each other we get the same result in the end.

It remains to cover the case $\eps \le 2^{-\kappa c}$. First of all, there are no eigenvalues larger than $1-\eps$ because the right-hand side of \eqref{N trick} is zero. For $\Lambda^-_\eps([0,c]^d,[0,1]^d)$ we get
$$\Lambda^-_\eps([0,c]^d,[0,1]^d) =N_\eps([0,c]^d,[0,1]^d)- N_{1/2}([0,c]^d,[0,1]^d).$$
For the upper bound we simply discard the second term and bound 
$$\Lambda^-_\eps([0,c]^d,[0,1]^d) \lesssim \left(\frac{\log\left(\frac{1}{\eps}\right)}{\log\left(\frac{4\log\left(\frac{1}{\eps}\right)}{c}\right)}\right)^d.$$
For the lower bound, first we pick some $\gamma_d > 0$ and if $\eps > \gamma_d^{-c}$ then we can use a lower bound for $\eps = 2^{-\kappa c}$ from the previous case (increasing $\kappa$ if needed) which is proportional to the lower bound in the current case with the obvious monotonicity of $\Lambda_\eps^-(A,B)$ in $\eps$.

Finally, in the case $\eps \le \gamma_d^{-c}$ we first crudely write 
$$\Lambda^-_\eps([0,c]^d,[0,1]^d) =N_\eps([0,c]^d,[0,1]^d)- N_{1/2}([0,c]^d,[0,1]^d) \ge N_\eps([0,c]^d,[0,1]^d)-2c^d$$
and estimate the first term with \eqref{N trick} as
$$N_\eps([0,c]^d,[0,1]^d) \ge \left( \frac{\delta\log\left(\frac{1}{\eps^{1/d}}\right)}{\log\left(\frac{4\log\left(\frac{1}{\eps^{1/d}}\right)}{c}\right)}\right)^d$$
for some $\delta > 0$. If $\eps \le \gamma_d^{-c}$ for small enough $\gamma_d$ then this is proportional to the desired lower bound and is also at least $4 c^d$ so we can cancel $-2c^d$ that we had.

\section{General case}
In this section we will prove Theorem \ref{union and good} and Theorem \ref{good and good}. We begin with the first one as it still contains most of our techniques while being slightly easier. So, let $A\subseteq \R^d$ be a bounded set with positive measure and boundary of finite upper Minkowski content and $B = \cup_n B_n$ be a finite union of parallelepipeds with disjoint interiors. We will also only consider the case $\eps > \alpha^{-c}$ for now and cover the other case at the end of the section.

We start with the $S-S^2$ trick. We have
$$S_{cA,B}-S_{cA,B}^2 = P_{cA}Q_BP_{c A^c}Q_BP_{cA} = T^* T,$$
where $T =P_{cA^c}Q_BP_{cA}$. We are interested in the number of eigenvalues of $S_{cA,B}-S_{cA,B}^2$ larger than $\eps(1-\eps)$, that is the number of singular values of $T$ larger than $\sqrt{\eps(1-\eps)}$. 

Since $\partial B_n$ has measure $0$, we clearly have $Q_B = \sum_{n=1}^N Q_{B_n}$, and therefore
$$P_{cA^c}Q_BP_{cA} = \sum_{n=1}^N P_{cA^c}Q_{B_n}P_{c A}.$$
Let $p = \frac{1}{\log \left(\frac{1}{\eps(1-\eps)}\right)} < 1$. By \eqref{schatten to count} and \eqref{quasi norm subadditive} we have
\begin{equation}\label{final bound Lambda}
\Lambda_\eps(cA, B)\le\sqrt{e}\|P_{cA^c}Q_BP_{cA}\|_p^p \le \sqrt{e}\sum_{n=1}^N \|P_{cA^c}Q_{B_n}P_{c A}\|^p_p.
\end{equation}
Using the $S-S^2$ trick in reverse we can rewrite these quasi-norms in terms of the eigenvalues $\lambda_m(cA, B_n)$ as 
$$\|P_{cA^c}Q_{B_n}P_{c A}\|^p_p = \sum_m (\lambda_m(cA, B_n)(1-\lambda_m(cA, B_n)))^{\frac{p}{2}}.$$
We have $B_n = M_n [0,1]^d + b_n$ for some invertible matrix $M_n$ and $b_n\in \R^d$. To reduce to the case $[0,1]^d$ we will use the following lemma.

\begin{lemma}\label{change of variables}
Let $A, B\subseteq \R^d$ be sets of finite measure and $M:\R^d \to \R^d$ be an invertible linear map. Then, for all $u,v\in \R^d$ and all $n\in \N$, 
\[
\lambda_n(A,B) = \lambda_n(MA + v, M^{-T}B +u).
\]
\end{lemma}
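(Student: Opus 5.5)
The plan is to exhibit a unitary operator $U$ on $L^2(\R^d)$ with $U S_{A,B} U^{-1} = S_{MA+v,\,M^{-T}B+u}$. Unitarily equivalent compact operators have the same eigenvalues with multiplicities, which gives the claim. I will build $U$ in three steps, each conjugating the pair of projections $(P_A, Q_B)$ in a controlled way. The steps commute up to the obvious bookkeeping, so they can be handled separately.

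\emph{Dilation.} Define $D_M f(x) = |\det M|^{-1/2} f(M^{-1}x)$, which is unitary. Directly,
\[
D_M P_A D_M^{-1} = P_{MA}.
\]
For the Fourier side, the change of variables $y = M^{-1}x$ gives
\[
\F(D_M f)(\xi) = |\det M|^{1/2}\,\hat f(M^T\xi),
\]
that is, $\F D_M = D_{M^{-T}}\F$. Hence
\[
D_M Q_B D_M^{-1} = \F^{-1} D_{M^{-T}} P_B D_{M^{-T}}^{-1}\F = \F^{-1}P_{M^{-T}B}\F = Q_{M^{-T}B}.
\]

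\emph{Translation.} The operator $\tau_v f(x) = f(x-v)$ is unitary and satisfies $\tau_v P_A\tau_{-v} = P_{A+v}$. Since $\F\tau_v$ is multiplication of $\hat f$ by $e^{-2\pi i v\cdot\xi}$, and multiplication operators commute with $P_B$, translation commutes with $Q_B$.

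\emph{Modulation.} The operator $m_u f(x) = e^{2\pi i u\cdot x}f(x)$ is unitary and commutes with $P_A$. Since $\F m_u = \tau_u\F$, it follows that $m_u Q_B m_u^{-1} = Q_{B+u}$.

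Taking $U = \tau_v m_u D_M$ and conjugating $S_{A,B} = P_AQ_BP_A$ term by term yields $P_{MA+v}Q_{M^{-T}B+u}P_{MA+v}$. The order of the operators in $U$ matters only slightly: $D_M$ is applied first, so that the later translation and modulation act on the already transformed sets. The operator $\tau_v$ does not alter $Q$, and $m_u$ does not alter $P$.

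The only point needing care is the Fourier identity for $D_M$, namely checking the normalization and the transpose in the exponent $x\cdot\xi = (My)\cdot\xi = y\cdot M^T\xi$. I do not expect a real obstacle. Finally, the eigenvalues $\lambda_n$ are defined through the spectrum of a compact self-adjoint operator, so they are preserved by unitary conjugation, and the sets keep finite measure under these maps.
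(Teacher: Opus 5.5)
Your proof is correct and follows essentially the paper's argument: conjugation by a unitary dilation (via $\mathcal{F}D_M = D_{M^{-T}}\mathcal{F}$), a translation and a modulation. Your $D_M$ is the inverse of the paper's, and you package the three conjugations into the single unitary $U=\tau_v m_u D_M$, whereas the paper treats the dilation and the shifts in two stages. Your modulation $e^{2\pi i u\cdot x}$ also produces the shift $B\mapsto B+u$ with the correct sign. The paper's phase $e^{-2\pi i u\cdot p}$ actually yields $B-u$, which is harmless since $u$ is arbitrary.
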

\begin{proof}
We start with the case $u = v = 0$. Consider the unitary dilation operator $D_Mf(x) = |\operatorname{det}(M)|^{\frac{1}{2}} f(Mx)$ with inverse $D_{M}^{-1} = D_{M^{-1}}$. We are going to compute $D_{M}^{-1} S_{A,B} D_{M}$. To this end, it is sufficient to note that, for any invertible matrix $M$ and all measurable sets $X$,
\[
\mathcal{F} D_{M} =  D_{M^{-T}} \mathcal{F}, \quad \mathcal{F}^{-1}D_{M^{-T}} = D_M \mathcal{F}^{-1}, \quad P_X D_M = D_M P_{MX}
\]
It follows that 
\[
D_{M}^{-1} S_{A,B} D_{M} = P_{MA} Q_{M^{-T}B} P_{MA}
\]
and therefore $\lambda_n(A,B) = \lambda_n(MA, M^{-T} B)$ by unitary equivalence. Next, we handle general translations $u,v$. Consider the unitary translation operator $T_vf(x) = f(x-v)$ and the phase shift multiplication operator $W_uf(p) = e^{-2\pi i u\cdot p} f(p)$ with inverses $T_{-v}$ and $W_{-u}$. As before, the following commutation relations hold:
\[
\mathcal{F}T_v = W_v \mathcal{F}, \quad \mathcal{F}^{-1} W_v = T_v \mathcal{F}^{-1}, \quad P_X T_v = T_v P_{X-v}, \quad P_X W_u = W_u P_X.
\]
We conclude that $W_u^{} T_v^{} S_{A,B} T_v^{-1} W_u^{-1} = S_{A+v, B+u}$ and therefore that $\lambda_n(A,B) = \lambda_n(A+v, B+u)$.

\end{proof}
Applying Lemma \ref{change of variables} to $A, B_n$ we get $\lambda_m(cA,B_n) = \lambda_m(cM_n^{-T}A, [0,1]^d)$. Observe that $M_n^{-T}A$ is still a bounded set whose boundary has finite upper Minkowski content. Indeed, simply note that for any $r>0$
\[
|M^{-T} A + B_r| \leq \frac{1}{|\operatorname{det}(M)|} \big\lvert A + B_{r\norm{M^{-T}}^{-1}} \big\rvert,
\]
where $+$ stands for the Minkowski sum of two sets. So, from now on to simplify the notation we will assume that $B = [0,1]^d$ and $A$ is some bounded set whose boundary has finite upper Minkowski content and we want to bound $\|P_{cA^c}Q_{[0,1]^d}P_{c A}\|^p_p$.

Next, we introduce the Whitney decomposition of $A$. To do this we  recall that $\partial A$ has measure $0$, hence $P_A = P_{{\rm int} A}$ and so we can assume that $A$ is open. Note also that $\partial ({\rm int} A)\subseteq\partial A$, hence $\partial ({\rm int} A)$ also has finite upper Minkowski content. The following is taken from \cite[Appendix J]{Grafakos_classical}

\begin{proposition}\label{usual Whitney}
Let $A\subsetneq \R^d$ be an open subset. There exists a family $(Q_k)$ of closed dyadic cubes such that
\begin{enumerate}
    \item $\bigcup_k Q_k = A$ and the $Q_k$'s have disjoint interiors,
    \item There are constants $c_1, c_2$ only dependent on $d$ such that
    \[
    c_1 \operatorname{diam}(Q_k) \leq \operatorname{dist}(Q_k, A^c) \leq c_2 \operatorname{diam}(Q_k).
    \]
\end{enumerate}
\end{proposition}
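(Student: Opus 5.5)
The plan is the standard dyadic construction of the Whitney decomposition. Since $A\neq\R^d$ is open, $A^c$ is nonempty and closed, so $\delta(x):={\rm dist}(x,A^c)$ is a positive $1$-Lipschitz function on $A$. For $j\in\Z$ set
\[
\Omega_j=\{x\in A:\ 2\sqrt d\,2^{-j}<\delta(x)\le 4\sqrt d\,2^{-j}\},
\]
so that $A=\bigcup_j\Omega_j$ as a disjoint union. Let $\mathcal F_0$ be the family of closed dyadic cubes $Q$ of side $2^{-j}$ (hence $\operatorname{diam}Q=\sqrt d\,2^{-j}$) that meet $\Omega_j$, over all $j$.

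First I will check that each $Q\in\mathcal F_0$ satisfies the two-sided bound and lies in $A$. If $x\in Q\cap\Omega_j$, then for every $y\in Q$ we have $|x-y|\le\sqrt d\,2^{-j}$. Hence $\delta(y)\ge\delta(x)-\sqrt d\,2^{-j}>\sqrt d\,2^{-j}>0$, and in particular $Q\subset A$. Moreover ${\rm dist}(Q,A^c)\ge \operatorname{diam}Q$, and
\[
{\rm dist}(Q,A^c)\le\delta(x)\le 4\operatorname{diam}Q .
\]
Since every point of $A$ lies in some $\Omega_j$ and hence in a cube of $\mathcal F_0$, we also have $\bigcup\mathcal F_0=A$.

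Next I pass to maximal cubes. Any two dyadic cubes are either nested or have disjoint interiors. Every cube of $\mathcal F_0$ has side at most a bound depending on $\sup_{x\in Q}\delta(x)$; more concretely, side $2^{-j}$ with $2\sqrt d\,2^{-j}<\delta(x)\le {\rm diam}(A\cap\ldots)$, so ascending chains in $\mathcal F_0$ containing a given cube are finite. Thus each cube of $\mathcal F_0$ is contained in a unique maximal one. I will make this precise as follows: if $Q\subset Q'$ are both in $\mathcal F_0$ with $Q'$ of side $2^{-i}$, then ${\rm dist}(Q,A^c)\le 4\sqrt d\,2^{-j}$ while ${\rm dist}(Q',A^c)\ge\sqrt d\,2^{-i}$. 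Because $Q\subset Q'$ gives ${\rm dist}(Q,A^c)\ge{\rm dist}(Q',A^c)$, we get $2^{-i}\le 4\cdot 2^{-j}$. So the chain above $Q$ has length at most $3$, and maximal elements exist.

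Let $(Q_k)$ be the maximal cubes of $\mathcal F_0$. They have pairwise disjoint interiors, they cover $A$, and they inherit property (2) with $c_1=1$ and $c_2=4$.

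The main point to be careful about is the existence of maximal elements. Since $A$ may be unbounded, I cannot use boundedness of $A$ for this. The containment estimate above gives it directly, since it bounds the side of any cube containing a given $Q$. The remaining steps are routine bookkeeping.
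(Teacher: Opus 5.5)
Your proof is correct: it is the standard Whitney construction (dyadic layers $\Omega_j$, the cubes of side $2^{-j}$ meeting $\Omega_j$, then the maximal cubes, giving $c_1=1$, $c_2=4$), which is the proof in Grafakos, Appendix J, that the paper cites rather than reproducing. The only blemish is the garbled sentence involving $\operatorname{diam}(A\cap\ldots)$, which you should delete, since your estimate $2^{-i}\le 4\cdot 2^{-j}$ for nested $Q\subset Q'$ in $\mathcal F_0$ already guarantees maximal cubes exist without using boundedness of $A$.
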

For the future use, we will need to modify this decomposition slightly to make constant $c_1$ as large as we like.
\begin{proposition}\label{our Whitney}
Let $A\subsetneq \R^d$ be an open subset. There exists a family $(Q_k)$ of closed dyadic cubes such that
\begin{enumerate}
    \item $\bigcup_k Q_k = A$ and the $Q_k$'s have disjoint interiors,
    \item There is a constant $c_3$ only dependent on $d$ such that
    \[
    2 \operatorname{diam}(Q_k) \leq \operatorname{dist}(Q_k, A^c) \leq c_3 \operatorname{diam}(Q_k).
    \]
\end{enumerate}
\end{proposition}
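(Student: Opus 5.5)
We obtain Proposition \ref{our Whitney} from Proposition \ref{usual Whitney} by subdividing every Whitney cube dyadically a fixed number of times. Let $(Q'_j)$ be the family from Proposition \ref{usual Whitney}, with constants $c_1, c_2$ depending only on $d$. Fix an integer $m\ge 0$, depending only on $d$ (through $c_1$), such that $2^m c_1 \ge 2$. Split each $Q'_j$ into its $2^{md}$ dyadic subcubes of side length $2^{-m}\ell(Q'_j)$, and let $(Q_k)$ be the collection of all these subcubes.

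Property (1) is immediate. Each subcube is a closed dyadic cube. The subcubes of a fixed $Q'_j$ have disjoint interiors and their union is $Q'_j$. Subcubes of different $Q'_j$ have disjoint interiors because they lie in the cubes $Q'_j$, which already do. Hence $\bigcup_k Q_k=\bigcup_j Q'_j = A$.

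For property (2), let $Q_k\subseteq Q'_j$. Then $\operatorname{diam}(Q_k) = 2^{-m}\operatorname{diam}(Q'_j)$. Distance to a fixed set can only increase when we pass to a subset, so
\[
\operatorname{dist}(Q_k, A^c) \ge \operatorname{dist}(Q'_j, A^c) \ge c_1 \operatorname{diam}(Q'_j) = 2^m c_1 \operatorname{diam}(Q_k) \ge 2\operatorname{diam}(Q_k).
\]
For the upper bound, every point of $Q_k$ lies in $Q'_j$ and is therefore within $\operatorname{diam}(Q'_j)$ of any point of $Q'_j$. Taking the infimum over points of $Q'_j$ and of $A^c$ gives
\[
\operatorname{dist}(Q_k, A^c) \le \operatorname{dist}(Q'_j, A^c) + \operatorname{diam}(Q'_j) \le (c_2+1)\operatorname{diam}(Q'_j) = 2^m(c_2+1)\operatorname{diam}(Q_k).
\]
So we may take $c_3 = 2^m(c_2+1)$, which depends only on $d$.

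The only points needing care are that subdivision preserves dyadicity and the disjointness of interiors, and that the lower bound improves by exactly the factor $2^m$. Both are elementary, so no real obstacle is expected.
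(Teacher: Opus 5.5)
Your proposal is correct and is essentially the paper's proof: subdivide each cube of Proposition \ref{usual Whitney} into $2^{md}$ dyadic subcubes with $c_1 2^m \ge 2$. The lower bound then follows from monotonicity of the distance under passing to subsets, and the upper bound from $\operatorname{dist}(Q_k,A^c)\le \operatorname{dist}(Q'_j,A^c)+\operatorname{diam}(Q'_j)$, giving $c_3=(c_2+1)2^m$ exactly as in the paper.
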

\begin{proof}
Let $Q_k$ be a sequence of cubes from Proposition \ref{usual Whitney} and cut each of them, for some $m\in \N$, into $2^{md}$ dyadic subcubes $Q_{k,1},\ldots , Q_{k, 2^{md}}$ of diameter $2^{-m}{\rm diam}(Q_k)$. We have 
$${\rm dist}(Q_{k,n}, A^c) \le {\rm dist}(Q_{k}, A^c) +{\rm diam}(Q_{k}) \le (c_2 +1){\rm diam}(Q_{k}) = (c_2+1)2^m {\rm diam}(Q_{k,n}),$$
so $c_3 = (c_2+1)2^m$ works. On the other hand, 
$${\rm dist}(Q_{k,n}, A^c) \ge {\rm dist}(Q_k, A^c) \ge c_1 {\rm diam}(Q_k)= c_1 2^{m}{\rm diam}(Q_{k,n}),$$
so if we choose $m$ so that $c_12^m > 2$ we will get the desired result for the cubes $Q_{k,n}$.
\end{proof}
By $A_l$ we denote the collection of cubes in the Whitney decomposition of $A$ with side length $2^{-l}$. Since we assume that $A$ is bounded, there is $l_0 \in \Z$ such that $A_l = \varnothing$ for  $l < l_0$. We will need the following two lemmas about open, bounded sets $A$ whose boundary has finite upper Minkowski content.

\begin{lemma}\label{shell size}
Let $A$ be an open bounded set with boundary of finite upper Minkowski content. There exists a constant $C$ depending only on $A$ such that
\[
|A_l| \leq C 2^{(d-1)l}.
\]
\end{lemma}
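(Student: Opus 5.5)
Here $|A_l|$ denotes the number of cubes in $A_l$. The plan is to place each cube of $A_l$ inside a thin neighbourhood of $\partial A$ and then use a volume count. Every cube $Q\in A_l$ has side $2^{-l}$ and diameter $\sqrt d\,2^{-l}$. By Proposition \ref{our Whitney}, $\operatorname{dist}(Q,A^c)\le c_3\sqrt d\,2^{-l}$. Since $A$ is open and bounded, $A^c\neq\varnothing$, and any segment from a point of $Q\subset A$ to a nearest point of $A^c$ must cross $\partial A$. Hence every point $x\in Q$ satisfies
\[
\operatorname{dist}(x,\partial A)\le \operatorname{dist}(x,A^c)\le \operatorname{dist}(Q,A^c)+\operatorname{diam}(Q)\le (c_3+1)\sqrt d\,2^{-l}=:r_l .
\]
Therefore $Q\subseteq \Gamma_{r_l}:=\{x:\operatorname{dist}(x,\partial A)<2r_l\}$; the factor $2$ makes the inequality strict.

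The cubes in $A_l$ have disjoint interiors and each has volume $2^{-dl}$. Taking Lebesgue measure gives
\[
|A_l|\,2^{-dl}\le |\Gamma_{r_l}|.
\]
Finite upper Minkowski content of $\partial A$ gives $r_*>0$ and $K<\infty$ with $|\{\operatorname{dist}(x,\partial A)<r\}|\le 2Kr$ for all $0<r<r_*$. So for all $l$ with $2r_l<r_*$ we get $|\Gamma_{r_l}|\le 4K r_l=C'2^{-l}$, and hence $|A_l|\le C'2^{(d-1)l}$.

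Only finitely many $l$ remain, namely $l_0\le l\le l_1$ with $l_1$ fixed by $2r_{l_1}\ge r_*$. For these $l$, the cubes of $A_l$ lie in the bounded set $A$, so $|A_l|\le |A|\,2^{dl}\le |A|\,2^{dl_1}$. Since $2^{(d-1)l}\ge 2^{(d-1)l_0}$ is bounded below, enlarging the constant covers these finitely many levels as well. The result is a single constant $C$ depending only on $A$, through $d$, $K$, $r_*$, $l_0$ and $|A|$.

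The only point that needs care is the first step: the upper bound $\operatorname{dist}\le c_3\operatorname{diam}$ from the Whitney decomposition must be combined with the fact that distance to $A^c$ is bounded by distance to $\partial A$ for points of the open set $A$. Beyond that, the argument is a routine volume comparison.
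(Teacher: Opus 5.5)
Your proof is correct and follows essentially the same route as the paper. Both arguments place every level-$l$ Whitney cube inside an $O(2^{-l})$-neighbourhood of $\partial A$, compare volumes using finite upper Minkowski content for large $l$, and handle the finitely many remaining levels through the trivial bound $|A_l|\le 2^{dl}|A|$. Where the paper only cites ``a property of the Whitney decomposition'' for the first step, you derive it explicitly from $c_3$ and the inequality $\operatorname{dist}(x,\partial A)\le\operatorname{dist}(x,A^c)$.
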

\begin{lemma}\label{shell size boundary}
Let $A$ be an open bounded set with boundary of finite upper Minkowski content. There exists a constant $C$ depending only on $A$ such that for all small enough $r > 0$ we can find a set $N_r$ of axis-parallel boxes with side length $r$ and with disjoint interiors such that $|N_r| \le Cr^{1-d}$ and 
$$\partial A + B_r \subseteq\cup_{Q\in N_r} Q.$$
\end{lemma}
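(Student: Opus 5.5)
The statement to prove is Lemma \ref{shell size boundary}; I would in fact prove it first and then deduce Lemma \ref{shell size} from it. The approach is a plain covering argument against the finite upper Minkowski content of $\partial A$.

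\emph{Step 1: a uniform Minkowski bound.} Since $\limsup_{r\to 0^+}|\partial A + B_r|/(2r) < \infty$, there exist $r_1>0$ and $K$ such that
\[
|\partial A + B_s| \le K s \quad \text{for all } 0<s\le r_1 .
\]

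\emph{Step 2: the boxes $N_r$.} Take $r \le r_1/(1+\sqrt d)$. Let $N_r$ be the family of all cubes of the grid $r\Z^d$ that meet $\partial A + B_r$. These cubes are axis-parallel, have side length $r$, have disjoint interiors, and their union covers $\partial A + B_r$.

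\emph{Step 3: counting $N_r$.} Each $Q\in N_r$ has diameter $\sqrt d\, r$ and contains a point within distance $r$ of $\partial A$. Hence
\[
Q\subseteq \partial A + B_{(1+\sqrt d)r} .
\]
The cubes have disjoint interiors, so by Step 1
\[
r^d\,\#N_r \le K(1+\sqrt d)\,r .
\]
This gives $\#N_r \le C r^{1-d}$ with $C = K(1+\sqrt d)$. I read $|N_r|$ in the statement as the number of boxes. Their total volume is then at most $C r$.

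\emph{Deducing Lemma \ref{shell size}.} Take a cube $Q$ in $A_l$, so $Q$ has side $2^{-l}$. By Proposition \ref{our Whitney},
\[
\operatorname{dist}(Q,A^c)\le c_3\sqrt d\, 2^{-l}.
\]
Since $Q\subseteq A$ and $A$ is open, every point of $A$ at distance $\rho$ from $A^c$ is at distance $\rho$ from $\partial A$. Therefore
\[
Q\subseteq \partial A + B_{s}, \qquad s=(c_3+1)\sqrt d\, 2^{-l}.
\]
The Whitney cubes have disjoint interiors, so for $l$ large enough that $s\le r_1$, Step 1 gives
\[
2^{-ld}\,\#A_l \le K (c_3+1)\sqrt d\, 2^{-l}.
\]
Hence $\#A_l\lesssim 2^{(d-1)l}$. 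Here $|A_l|$ is again read as a count.

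The finitely many remaining $l$ lie between $l_0$ and the threshold. For these, $\#A_l \le |A|\,2^{ld}$ because the cubes are disjoint and contained in $A$. Enlarging $C$ absorbs these cases.

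\emph{Main obstacle.} There is no real difficulty. The only points needing care are that Step 1 holds for all small $s$ and not just in the limit, and the geometric containments into the thickened boundary. Both are direct from the definitions.
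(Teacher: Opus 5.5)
Your proof is correct and is essentially the paper's argument. You take the cubes of the grid $r\Z^d$ that meet $\partial A + B_r$, observe that each lies in $\partial A + B_{(1+\sqrt d)r}$, and count them by comparing their total volume with the bound $|\partial A + B_s| \lesssim s$ for small $s$. Your deduction of the Whitney-cube count $\#A_l \lesssim 2^{(d-1)l}$ from the same Minkowski bound is also sound, and it mirrors the paper's separate proof of Lemma \ref{shell size}.
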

\begin{proof}[Proof of Lemma \ref{shell size}]
Let $X_l = \cup_{Q\in A_l} Q$. We have
\[
|A_l| = \sum_{Q\in A_l} 1 = \sum_{Q\in A_l} |Q| 2^{dl} = 2^{dl} |X_l|. 
\]
By a property of the Whitney decomposition, there is $C_d>0$ such that
\[
\operatorname{dist}(x, \partial A) \leq C_d 2^{-l}
\]
for all $x\in X_l$. Since $\partial A$ has finite upper Minkowski content, there is $l_1$ only dependent on $A$ such that
\[
|X_l| \leq |\{x: \operatorname{dist}(x, \partial A) \leq C_d 2^{-l}\}| \leq C_A 2^{-l}.
\]
for $l\geq l_1$ which gives us the desired estimate. If $l\leq l_1$ then we simply bound
\[
2^{dl}|X_l| \leq 2^{dl} |A| \leq 2^{(d-1)l} |A| 2^{l_1}. 
\]
\end{proof}
\begin{proof}[Proof of Lemma \ref{shell size boundary}]
For $z\in r \Z^d$ denote by $Q_r(z)$ the cube with side length $r$ centred at $z$. Let $Z_r \subseteq r\Z^d$ be the collection of points $z\in r\Z^d$ such that $Q_r(z) \cap (\partial A + B_r) \neq \varnothing$. If $\omega \in Q_r(z)$ for $ z\in Z_r$, then 
\[
\operatorname{dist}(\omega,\partial A) \leq \inf_{p\in Q_r(z)}(|\omega - p| + \operatorname{dist}(p,\partial A)) \leq (\sqrt{d} +1) r. 
\]
Hence,
\[
\left\lvert  \bigcup_{z\in Z_r} Q_r(z) \right\rvert \leq |\partial A + B_{(\sqrt{d} +1)r}| \leq C r  
\]
for $r$ small enough. Since $Q_r(z)$ for different $z$ are disjoint up to measure $0$, we get $$|Z_r|\le Cr^{1-d}.$$
Finally, we observe that we have the covering $A+B_r \subseteq \cup_{z\in Z_r} Q_r(z)$, so this covering works.

\end{proof}

\begin{remark}
Lemma \ref{shell size} requires only the internal part of the upper Minkowski content of $\partial A$, that is only $|(\partial A+B_r)\cap A|$, but for Lemma \ref{shell size boundary} we need the full upper Minkowski content.
\end{remark}

\begin{remark}
Note that Lemma \ref{shell size boundary} in particular implies that if a set has finite upper Minkowski content then it has finite $d-1$-dimensional Hausdorff measure.
\end{remark}

We will pick a small number $\delta > 0$ and consider the threshold $D = \left[\log_2\left(\frac{c}{\delta \log(\frac{1}{\eps})}\right)\right]$. The cubes with side lengths larger than $2^{-D}$ we will leave as is, and note that their union covers all of $A$ except possibly for the points in $C_d2^{-D}$-neighborhood of $\partial A$. Let $r = C_d2^{-D}$ and consider the cubes $N_r$ from Lemma \ref{shell size boundary}. We get
$$A\subseteq \left(\cup_{l < D}\cup_{Q\in A_l} Q\right) \cup \left(\cup_{Q\in N_r} Q\right).$$
First part of this union is disjoint. We make the second part disjoint by setting, for $X \in N_r,$
\[
X' = X \cap A \setminus (\cup_{l < D}\cup_{Q\in A_l} Q).
\]
Hence, $$A = \left(\cup_{l < D}\cup_{Q\in A_l} Q\right) \cup \left(\cup_{Q\in N_r} Q'\right).$$
From this we can write
\begin{equation}\label{operator decomposition sum}
P_{cA^c}Q_{[0,1]^d}P_{cA} = \sum_{l < D}\sum_{Q\in A_l} P_{cA^c}Q_{[0,1]^d}P_{cQ} + \sum_{Q\in N_r} P_{cA^c}Q_{[0,1]^d}P_{cQ'}.
\end{equation}
Next, we want to enlarge these sets to make the computations easier. To see how the singular values change under enlargements we will use the following lemma.

\begin{lemma}\label{enlargement}
Consider $X, Y, Z \subseteq \R^d$ and assume that $P_X Q_Y P_Z$ is compact. If $X\subset X'$ and $P_{X'}Q_Y P_Z$ is compact, then
\[
\sigma_k(P_X Q_Y P_Z) \leq \sigma_k(P_{X'} Q_Y P_Z).
\]
Similarly, if $Z\subseteq Z'$ and $P_X Q_Y P_{Z'}$ is compact, then 
\[
\sigma_k(P_X Q_Y P_Z) \leq \sigma_k(P_{X} Q_Y P_{Z'}).
\]
\end{lemma}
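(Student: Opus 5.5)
The plan is to write the smaller operator as the larger one composed with a contraction, and then use the standard fact that singular values cannot increase under such a composition: for bounded $L$, $R$ and compact $K$,
\[
\sigma_k(LKR)\le \|L\|\,\|R\|\,\sigma_k(K).
\]
This fact follows from the min-max (Courant--Fischer) characterization of singular values. Equivalently, one can use that $\sigma_k(K)=\inf\{\|K-F\|: \operatorname{rank}F<k\}$ and note that $LFR$ still has rank less than $k$.

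For the first claim, the sets satisfy $X\subseteq X'$, so the projections satisfy $P_X = P_XP_{X'}$ as multiplication operators, because $1_X1_{X'}=1_X$ almost everywhere. This gives
\[
P_XQ_YP_Z = P_X\,(P_{X'}Q_YP_Z).
\]
Here $P_X$ is an orthogonal projection, so $\|P_X\|\le 1$. Applying the inequality above with $L=P_X$, $R=I$ and $K=P_{X'}Q_YP_Z$, which is compact by assumption, yields $\sigma_k(P_XQ_YP_Z)\le\sigma_k(P_{X'}Q_YP_Z)$.

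The second claim is symmetric. From $Z\subseteq Z'$ we get $P_Z=P_{Z'}P_Z$, and hence
\[
P_XQ_YP_Z=(P_XQ_YP_{Z'})P_Z,
\]
so we apply the same inequality with $R=P_Z$.

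Alternatively, one could argue through adjoints, using $\sigma_k(K)=\sigma_k(K^*)$ to reduce the second claim to the first. The composition argument handles both directly, so this is not needed.

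None of these steps is a real obstacle; the proof is routine. The only point needing a brief justification is the inequality $\sigma_k(LKR)\le\|L\|\|R\|\sigma_k(K)$. I would prove it via the approximation-number formula: if $F$ has rank less than $k$, then $\|LKR-LFR\|\le\|L\|\|R\|\|K-F\|$, and taking the infimum over such $F$ gives the inequality.
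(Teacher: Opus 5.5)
Your proof is correct. It uses the same idea as the paper: the smaller operator equals the larger one multiplied by a contraction. The difference is how monotonicity of singular values is justified.

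The paper uses the max-min formula with the pointwise bound $\|P_XQ_YP_Z\psi\|\le\|P_{X'}Q_YP_Z\psi\|$. That bound only handles a contraction acting on the left, so the paper gets the second inequality by passing to adjoints.

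You use the approximation-number formula $\sigma_k(K)=\inf\{\|K-F\|:\operatorname{rank}F<k\}$, which treats left and right factors symmetrically. So the single estimate $\sigma_k(LKR)\le\|L\|\,\|R\|\,\sigma_k(K)$ gives both claims directly, without adjoints. As a side benefit, your factorization shows that compactness of $P_XQ_YP_Z$ follows from compactness of the enlarged operator, so that hypothesis in the lemma is redundant.

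The paper itself uses the same approximation-number formula later, when bounding $\sigma_k(J_r)$. Your route therefore needs nothing beyond what the paper already relies on.
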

\begin{proof}
By the max-min theorem, if $T$ is a compact operator on $H$, then
\[
\sigma_k(T) = \max_{\substack{V\subseteq H\\ \dim{H} = k}} \min_{\psi \in V\setminus \{0\}} \frac{\norm{T\psi}}{\norm{\psi}}.
\]
For the first inequality we simply have $\|P_{X'}Q_YP_Z\psi\| \ge \|P_XQ_YP_Z\psi\|$ for all $\psi \in L^2(\R^d)$. The second inequality follows from the first by taking the adjoint. 
\end{proof}

\begin{corollary}\label{Schatten monotonicty}
Let $X,Y,Z$ be measurable sets. If $X \subseteq X'$ and $Z\subseteq Z'$ then $\norm{P_X Q_Y P_Z}_p^p \leq \norm{P_{X'} Q_Y P_{Z'}}_p^p$.
\end{corollary}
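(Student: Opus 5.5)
The corollary is immediate from Lemma \ref{enlargement}: apply it twice and sum the singular values termwise. The plan is to pass through the intermediate operator $P_{X'}Q_YP_Z$. First, by the first half of Lemma \ref{enlargement} with $X\subseteq X'$, we get $\sigma_k(P_XQ_YP_Z)\le \sigma_k(P_{X'}Q_YP_Z)$ for every $k$. Second, by the second half of the lemma applied to $P_{X'}Q_YP_Z$ with $Z\subseteq Z'$, we get $\sigma_k(P_{X'}Q_YP_Z)\le\sigma_k(P_{X'}Q_YP_{Z'})$. Combining the two gives $\sigma_k(P_XQ_YP_Z)\le \sigma_k(P_{X'}Q_YP_{Z'})$ for all $k$. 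Since $t\mapsto t^p$ is increasing on $[0,\infty)$, raising to the $p$-th power and summing over $k$ yields the claimed inequality of $p$-Schatten quasi-norms.

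The only point needing care is compactness, since Lemma \ref{enlargement} is stated under compactness hypotheses, while the corollary allows arbitrary measurable sets and uses the convention $\|K\|_p=\infty$ for non-compact $K$. I would split into cases.

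\textbf{Case 1: $P_{X'}Q_YP_{Z'}$ is not compact.} Then the right-hand side is $+\infty$ by convention, and there is nothing to prove.

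\textbf{Case 2: $P_{X'}Q_YP_{Z'}$ is compact.} Then $P_{X'}Q_YP_Z = (P_{X'}Q_YP_{Z'})P_Z$ and $P_XQ_YP_Z = P_X(P_{X'}Q_YP_{Z'})P_Z$, because $P_XP_{X'}=P_X$ and $P_{Z'}P_Z=P_Z$. Compact operators form an ideal, so both of these operators are compact. The hypotheses of Lemma \ref{enlargement} therefore hold at each step above.

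Alternatively, one can avoid the max-min formulation in the lemma altogether. The identity $P_XQ_YP_Z=P_X\,(P_{X'}Q_YP_{Z'})\,P_Z$ together with the standard bound $\sigma_k(AKB)\le\|A\|\|B\|\sigma_k(K)$, with $\|P_X\|,\|P_Z\|\le1$, gives the singular value comparison directly. No step here is a real obstacle; the only subtlety is the compactness bookkeeping in Case 2.
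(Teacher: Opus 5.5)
Your proposal is correct and is the argument the paper intends: the corollary is stated without proof as an immediate consequence of Lemma \ref{enlargement}, applied once to enlarge $X$ and once to enlarge $Z$, with the resulting singular value inequalities raised to the $p$-th power and summed. Your compactness case split, trivial when $P_{X'}Q_YP_{Z'}$ is not compact and otherwise handled by the ideal property via $P_XQ_YP_Z = P_X(P_{X'}Q_YP_{Z'})P_Z$, fills in a detail the paper leaves implicit.
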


We will apply \eqref{quasi norm subadditive} and Corollary \ref{Schatten monotonicty} to \eqref{operator decomposition sum}. For $P_{cA^c}Q_{[0,1]^d}P_{cQ}$, $Q\in A_l$, we enlarge $cA^c$ to $c\tilde{Q}^c$, where $\tilde{Q}$ is the cube with the same centre as $Q$ and two times larger side length. Note that by the property of the Whitney decomposition \ref{our Whitney} $\tilde{Q}$ is still contained in $A$. For $P_{cA^c}Q_{[0,1]^d}P_{cQ'}$, $Q\in N_r$, we simply enlarge $cA^c$ to all of $\R^d$ and $cQ'$ to $cQ$. We get
\begin{equation}\label{triangle for box}
    \|P_{cA^c}Q_{[0,1]^d}P_{cA}\|_p^p \le \sum_{l< D} \sum_{Q\in A_l} \| P_{c \tilde{Q}^c} Q_{[0,1]^d} P_{cQ}\|^p_p +\sum_{Q\in N_r} \| Q_{[0,1]^d} P_{cQ}\|^p_p.
\end{equation}
After shifting with the help of Lemma \ref{change of variables} we can see that the operators in the second sum are simply tensor powers of one-dimensional operators $I_{cr}$ from Lemma \ref{One dimensional operators}. We will use the fact that the Schatten norms are multiplicative under taking the tensor products.
\begin{lemma}\label{tensor product Schatten}
Consider Hilbert spaces $H_1,\dots, H_m$ and compact operators $A_j \in H_j$, $j=1,\dots, m$. If $T$ is the operator $T = A_1 \otimes \dots \otimes A_m$ on $H = H_1\otimes \dots \otimes H_m$, then for any $0<p< \infty$ we have
\[
\| T\|_p = \|A_1\|_p \dots \| A_m\|_p.
\]
\end{lemma}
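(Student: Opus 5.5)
The plan is to identify the singular values of the tensor product explicitly and then sum. For each $j$ take a singular value decomposition $A_j = \sum_{k} \sigma_k(A_j) \langle \cdot, e^{(j)}_k\rangle f^{(j)}_k$, where $(e^{(j)}_k)_k$ and $(f^{(j)}_k)_k$ are orthonormal systems in $H_j$ (Schmidt decomposition of a compact operator). The simple tensors $e^{(1)}_{k_1}\otimes\dots\otimes e^{(m)}_{k_m}$, and likewise $f^{(1)}_{k_1}\otimes\dots\otimes f^{(m)}_{k_m}$, form orthonormal systems in $H$. On algebraic tensors, $T$ acts by
\[
T = \sum_{k_1,\dots,k_m} \sigma_{k_1}(A_1)\cdots\sigma_{k_m}(A_m)\, \langle \cdot, e^{(1)}_{k_1}\otimes\dots\otimes e^{(m)}_{k_m}\rangle\, f^{(1)}_{k_1}\otimes\dots\otimes f^{(m)}_{k_m}.
\]
By boundedness this extends to all of $H$. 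This is therefore a Schmidt decomposition of $T$. The products $\sigma_{k_1}(A_1)\cdots\sigma_{k_m}(A_m)$ tend to zero along any enumeration of the index set, so $T$ is compact. Its nonzero singular values, counted with multiplicity, are exactly these products.

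Once this identification is made, the identity is a computation with sums of nonnegative terms, so rearrangement is free:
\[
\|T\|_p^p = \sum_{k_1,\dots,k_m} \prod_{j=1}^m \sigma_{k_j}(A_j)^p = \prod_{j=1}^m \sum_{k}\sigma_k(A_j)^p = \prod_{j=1}^m \|A_j\|_p^p.
\]
Tonelli's theorem for counting measure justifies this, including the case where some factor is infinite (with the convention $0\cdot\infty = 0$, which matches the case where some $A_j = 0$). Taking $p$-th roots gives the claim.

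The only step needing care is the first: showing that the product family really is a singular value decomposition of $T$, and in particular that no additional singular values arise. This follows because $T$ vanishes on the orthogonal complement of the closed span of the tensor $e$-system, and on that span it maps an orthonormal basis to orthogonal vectors with the stated norms. Equivalently, $T^*T = (A_1^*A_1)\otimes\dots\otimes(A_m^*A_m)$ is diagonal in the product basis. By induction it also suffices to treat $m=2$, which I would do to keep the notation light.
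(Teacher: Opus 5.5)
Your proposal is correct and follows the same route as the paper. The paper's proof simply asserts the multiset identity $\{\sigma_n(T)\} = \{\sigma_{n_1}(A_1)\cdots\sigma_{n_m}(A_m)\}$ and sums $p$-th powers; you additionally justify that identity via the product Schmidt decomposition (equivalently, by diagonalizing $T^*T = (A_1^*A_1)\otimes\cdots\otimes(A_m^*A_m)$ in the product basis) and justify the rearrangement of the sum by Tonelli.
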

\begin{proof}
As multisets, we have
\[
\{\sigma_n(T) \mid n\in \N\} = \{ \sigma_{n_1}(A_1) \dots \sigma_{n_m}(A_m) \mid n_1,\dots, n_m\in \N\}.
\]
Summing over $p$'th powers of the elements in the left- and right-hand sides, the claim follows. 
\end{proof}

By Lemma \ref{One dimensional operators} we have
$$\|I_{cr}\|_p^p  \le \sum_{n=1}^{[10cr]}1^p + \sum_{n=[10cr]}^\infty C^p e^{-\tau p n} \le 10 cr + C^p \frac{1}{1-e^{-\tau p}} \le C_1\left(cr + \frac{1}{p}\right)$$
for some absolute constant $C_1$. By our choice of $r$ and $p$ we have $C_2cr \ge \frac{1}{p}$, thus $\|I_{cr}\|_p^p \le C_3 cr$ and therefore by \ref{tensor product Schatten}
\[
\| Q_{[0,1]^d} P_{cQ} \|^p_p = \| I_{cr} \|^{pd}_p \leq C_3^d c^d r^d.
\]

We turn to the operator $P_{c\tilde{Q}} Q_{[0,1]^d} P_{cQ}$ for $Q\in A_l$, $l<D$. To exactly connect it to the one-dimensional operators that we stated in Lemma \ref{One dimensional operators}, we are going to do further simplifications. Firstly, we can, after a translation by means of an argument similar to the proof of Lemma \ref{change of variables}, assume that
\[
Q = I^d , \quad \tilde{Q} = \tilde{I}^d,
\]
where $I = [-2^{-l-1},2^{-l-1}]$ and $\tilde{I} = [-2^{-l},  2^{-l}]$. Then, with the unions being disjoint, 
\[
\tilde{Q}^c = \big(\tilde{I}^{c} \times \R^{d-1}\big) \cup (\tilde{I} \times \tilde{I}^c \times \R^{d-2} \big) \cup \dots \cup \big(\tilde{I}^{d-1} \times \tilde{I}^c\big).
\]
We write $P_{c\tilde{Q}^c}Q_{[0,1]^d}P_Q$ as a sum with respect to this decomposition of $\tilde{Q}^c$ and enlarge $\tilde{I}$ to all of $\R$ in every single instance. It follows by Lemma \ref{enlargement}, Lemma \ref{tensor product Schatten}, and \eqref{quasi norm subadditive} that
\begin{align*}
\|P_{c\tilde{Q}^c} Q_{[0,1]^d} P_{cQ}\|^p_p &\leq d \|\left(Q_{[0,1]} P_{cI}\right)^{\otimes (d-1)} \otimes P_{c\tilde{I}^c} Q_{[0,1]} P_{cI}\|^p_p\\
& = d \| Q_{[0,1]} P_{cI}\|_p^{p(d-1)} \|P_{c\tilde{I}^c} Q_{[0,1]} P_{cI}\|^p_p.
\end{align*}
For $Q_{[0,1]}P_{cI} = I_{c 2^{-l}}$ as above we have
$$\| Q_{[0,1]} P_{cI}\|_p^{p}\le C_1\left(c2^{-l}+\frac{1}{p}\right).$$
Again, by our choice of $D$ and $p$ we always have $C_2 c2^{-l} \ge \frac{1}{p}$, hence $$\| Q_{[0,1]} P_{cI}\|_p^{p}\le C_3 c2^{-l}.$$

For $P_{c\tilde{I}^c} Q_{[0,1]} P_{cI} = J_{c2^{-l-1}}$ it follows from Lemma \ref{One dimensional operators} that
$$\|P_{c\tilde{I}^c} Q_{[0,1]} P_{cI}\|_p^p \le \sum_{n=1}^\infty C^p e^{-\tau p} \le \frac{C_1}{p}.$$
Hence, in total
$$\|P_{c\tilde{Q}^c} Q_{[0,1]^d} P_{cQ}\|^p_p \le \frac{C}{p}c^{d-1}2^{-(d-1)l}.$$

Returning to \eqref{triangle for box}, we see that 
$$\|P_{cA^c}Q_{[0,1]^d}P_{cA}\|^p_p\le C\sum_{l < D} |A_l| c^{d-1}2^{-(d-1)l} + C|N_r|c^d r^d.$$
It follows from Lemma \ref{shell size} that $|A_l| \leq C 2^{(d-1)l}$ and from Lemma \ref{shell size boundary} that $|N_r| \leq C r^{-(d-1)}$. Also, $|A_l| = 0$ for $l < l_0$. Hence
$$\|P_{cA^c}Q_{[0,1]^d}P_{cA}\|^p_p \le \frac{C}{p}\sum_{l_0\le l < D}c^{d-1} + Cc^d r = C\left((D+l_0)c^{d-1}\frac{1}{p} + c^dr\right).$$
By our choice of $D, p, r$ for $\frac{1}{2} > \eps > \alpha^{-c}$ if $\delta$ is small enough we get
\begin{equation}\label{Shatten norm final bound}
\|P_{cA^c}Q_{[0,1]^d}P_{cA}\|^p_p\lesssim c^{d-1}\log\left(\frac{1}{\eps}\right)\log\left(\frac{\alpha c}{\log\left(\frac{1}{\eps}\right)}\right).
\end{equation}
Plugging this estimate into \eqref{final bound Lambda} and using that $N$ is a fixed constant we finally conclude
$$\Lambda_\eps(cA, B)\lesssim c^{d-1}\log\left(\frac{1}{\eps}\right)\log\left(\frac{\alpha c}{\log\left(\frac{1}{\eps}\right)}\right).$$

Next, we prove Theorem \ref{good and good} where both $A$ and $B$ are bounded sets with boundaries of finite upper Minkowski content in the same regime $\frac{1}{2} > \eps > \alpha^{-c}$. For this we do the $S-S^2$ trick again, and this time apply the Whitney decomposition to the set $B$ with the same threshold $D = \left[\log_2\left(\frac{c}{\delta \log(\frac{1}{\eps})}\right)\right]$. With an obvious adaptation of notation from the set $A$, the set $B$ decomposes as 
$$B = \left(\cup_{l < D}\cup_{Q\in B_l} Q\right) \cup \left(\cup_{Q\in N_r} Q'\right).$$
We have 
$$\Lambda_\eps(cA, B) \le \sqrt{e}\|P_{A^c}Q_BP_A\|_p^p \le \sqrt{e}\sum_{l < D}\sum_{Q\in B_l}\|P_{cA^c}Q_QP_{cA}\|_p^p + \sqrt{e}\sum_{Q\in N_r} \|P_{cA^c}Q_{Q'}P_{cA}\|_p^p.$$
For $Q\in B_l$ it follows from Lemma \ref{change of variables} that
$$\|P_{cA^c}Q_QP_{cA}\|_p^p = \|P_{c2^{-l}A^c}Q_{[0,1]^d}P_{c2^{-l}A}\|_p^p,$$
so we can use \eqref{Shatten norm final bound} and get
$$\|P_{cA^c}Q_QP_{cA}\|_p^p \lesssim c^{d-1}2^{-l(d-1)}\log\left(\frac{1}{\eps}\right)\log\left(\frac{\alpha c2^{-l}}{\log\left(\frac{1}{\eps}\right)}\right).$$
For the second logarithm we notice that it is uniformly bounded by $\log\left(\frac{\alpha c2^{-l_0}}{\log\left(\frac{1}{\eps}\right)}\right)$, so, denoting $\alpha' = \alpha 2^{-l_0}$, we get
\begin{align*}
\sum_{l < D}\sum_{Q\in B_l}\|P_{cA^c}Q_QP_{cA}\|_p^p &\lesssim (D+l_0)c^{d-1}\log\left(\frac{1}{\eps}\right)\log\left(\frac{\alpha' c}{\log\left(\frac{1}{\eps}\right)}\right)\\
&\lesssim c^{d-1}\log\left(\frac{1}{\eps}\right)\log^2\left(\frac{\alpha' c}{\log\left(\frac{1}{\eps}\right)}\right),
\end{align*}
which is exactly the required bound.

For the boundary layer $P_{cA^c}Q_{Q'}P_{cA}$, $Q\in N_r$, we first enlarge $cA^c$ to the whole $\R^d$ and enlarge $cA$ to $cR$ where $R$ is a fixed cube containing $A$ to reduce the analysis to the operator $Q_{Q'}P_{cR}$ using Corollary \ref{Schatten monotonicty}. Then, by the equality $\sigma_n(Q_{Q'} P_{cR}) = \lambda_n(cR,Q')^{\frac{1}{2}}$ and the symmetry $\lambda_n(U,V) = \lambda_n(V,U)$ we get
$$\|P_{cA^c}Q_{Q'}P_{cA} \|^p_p \leq \| Q_{Q'} P_{cR}\|_p^p = \|P_{cR} Q_{Q'}P_{cR}\|_{\frac{p}{2}}^{\frac{p}{2}} = \|P_{Q'}Q_{cR}P_{Q'}\|_{\frac{p}{2}}^{\frac{p}{2}}=\|Q_{cR}P_{Q'}\|^p_p.$$
Now, we are able to enlarge $Q'$ to $Q$ and put all of the scaling onto $Q$ by means of Lemma \ref{change of variables} to conclude
$$\|P_{cA^c}Q_{Q'}P_{cA} \|^p_p \leq \|Q_{cR}P_{Q'}\|^p_p\leq \|P_{[0, cr]^d}Q_{[0,1]^d}\|_p^p.$$
By Lemma \ref{tensor product Schatten} the right-hand side is equal to $\|P_{[0, cr]}Q_{[0,1]}\|_p^{pd}$. Arguing  as in the proof of Theorem \ref{union and good} we see
$$\|P_{[0, cr]^d}Q_{[0,1]^d}\|_p^p \lesssim c^d r^d,$$
and therefore
$$\sum_{Q\in N_r} \| P_{cA^c}Q_{Q'}P_{cA}\|^p_p \lesssim |N_r| c^d r^d \lesssim c^dr \lesssim c^{d-1}\log\left(\frac{1}{\eps}\right),$$
which is even smaller than the required bound. This finishes the proof of Theorem \ref{good and good} in the regime $\frac{1}{2} > \eps > \alpha^{-c}$.

Finally, we will deal with the regime $0 < \eps \le \alpha^{-c}$. The proof will be the same for both Theorem \ref{union and good} and Theorem \ref{good and good}. Let $U_A\subset A \subset V_A$ and $U_B \subset B\subset V_B$ be some fixed boxes. By Lemma \ref{enlargement} and symmetry $\lambda_k(X,Y) = \lambda_k(Y,X)$ we have
\[
\lambda_1(cA, B) \leq \lambda_1(cV_A, B) \leq \lambda_1(c V_A, V_b).
\]
In particular, there are no eigenvalues larger than $1-\alpha^{-c}$ for large enough $\alpha$ by Proposition \ref{two boxes}.

For the count of eigenvalues between $\eps$ and $\frac{1}{2}$ we have
$$\Lambda_\eps^-(cA, B) = N_\eps(cA,B)-N_{1/2}(cA,B).$$
For the second term we have
$$N_{1/2}(cA,B)\le 2c^d |A| |B|$$
because ${\rm{Tr}}\,S_{cA,B} = c^d |A| |B|$. By Lemma \ref{enlargement} and symmetry we have
$$\lambda_k(cU_A, U_B)\le \lambda_k(cA,B)\le \lambda_k(cV_A,V_B),$$
and therefore
$$N_\eps(cU_A, U_B)\le N_\eps(cA,B)\le N _\eps(cV_A,V_B).$$
From \eqref{Lambda -, equivalence, two cubes} one can check that if $0 < \eps < \alpha^{-c}$ and $\alpha$ is large enough then
$$N_\eps(cU_A, U_B) \ge 4c^d |A| |B|.$$

Plugging in the bounds we get
$$\frac{N_\eps(cU_a, U_B)}{2} \le N_\eps(cU_A, U_B)-N_{1/2}(cA,B) \le \Lambda_\eps^-(cA, B)\le N_\eps(cV_A, V_B)$$
and both sides are proportional to the required value by \eqref{Lambda -, equivalence, two cubes} when $0 < \eps \le \alpha^{-c}$ for large enough $\alpha$.
\begin{remark}
Note that the final argument gives us a lower bound on eigenvalues for all sets $A, B$ with non-empty interiors and an upper bound on eigenvalues for all bounded sets $A, B$.
\end{remark}

\section{Singular values of one-dimensional operators}
In this section we will prove Lemma \ref{One dimensional operators}. Since the result for $I_r$ can be deduced from Theorem \ref{Bonami2}, we will start with an estimate for $\sigma_k(J_r)$.
\subsection{Singular values of $J_r$}
We will show that the operator $J_r = P_{[-2r,2r]^c} Q_{[0,1]} P_{[-r,r]}$ for $r>0$ 
decomposes into a sum of finite rank operators. This idea is very similar to the one employed by Karnik, Romberg and Davenport \cite{Karnik_Romberg_Davenport} who showed that a similar operator had low displacement-rank structure, and for such operators the singular values can be effectively estimated via Zolotarev numbers. By \eqref{kernel} the operator $J_r$ has kernel
\[
K(x,y) = 1_{[-2r,2r]^c}(x) \check{1}_{[0,1]}(x-y) 1_{[-r,r]}(y) = \frac{1}{2\pi i} 1_{[-2r,2r]^c}(x) \frac{e^{2\pi i(x-y)}-1}{x-y} 1_{[-r,r]}(y).
\]
The variables $x$ and $y$ separate except for the term $\frac{1}{x-y}$. In \cite{Karnik_Romberg_Davenport} the authors handled this term by instead considering $x J_r - J_r x$, which has rank 2. However, simply writing the term $\frac{1}{x-y}$ as a geometric series shows that
\begin{align*}
K(x,y) &= \frac{1}{2\pi i}\sum_{n=0}^\infty \left(x^{-n-1} 1_{[-2r,2r]^c}(x) e^{2\pi i x}\right) \left( y^{n} 1_{[-r,r]^c}(y) e^{-2\pi i y}\right)\\
&\quad\quad \quad -  \left(x^{-n-1} 1_{[-2r,2r]^c}(x)\right) \left( y^{n} 1_{[-r,r]^c}(y)\right).
\end{align*}
Note that the series converges absolutely since $\frac{|y|}{|x|} \leq \frac{1}{2}$ whenever the indicators are non-zero. This is the key point where we use the separation. Hence, if we set 
\begin{align*}
f_n(x) &=  x^{-n-1} 1_{[-2r,2r]^c}(x), \quad \tilde{f}_n(x) = f_n(x) e^{2\pi i x},\\
g_n(y) &=  y^{n} 1_{[-r,r]}(y), \quad \tilde{g}_n(y) = g_n(y) e^{2\pi i y},
\end{align*}
we see that, for $\phi \in L^2$,
\[
J_r \phi = -\frac{1}{2\pi i}\sum_{n=0}^\infty \left(\langle{\phi},g_n\rangle f_n - \langle \phi, \tilde{g}_n \rangle \tilde{f}_n\right).
\]
Clearly each term in the sum has rank at most 2. By a well-known characterization of the singular values of a compact operator $A$,
\[
\sigma_{k+1}(A) = \inf\{ \norm{A - P} \mid P \text{ is an operator of rank at most } k\},
\]
we immediately find, for all $N\in \N$,
\begin{align*}
\sigma_{2N+3}(J_r) \leq \frac{1}{2\pi} \sum_{n=N+1}^\infty \left(\norm{f_n} \norm{g_n} + \lVert \tilde{f}_n \rVert \norm{\tilde{g}_n}\right) = \frac{1}{\pi} \sum_{n=N+1}^\infty \norm{f_n} \norm{g_n}.
\end{align*}
The norms are easily computed exactly:
\begin{align*}
\norm{f_n}^2 = \frac{2}{2n+1} \left(2r\right)^{-2n-1}, \quad \norm{g_n}^2 = \frac{2}{2n+1} r^{2n+1}. 
\end{align*}
Therefore, we get
\[
\sigma_{2N+3}(J_r) \leq \frac{\sqrt{2}}{\pi} \sum_{n=N+1}^\infty \frac{1}{2n+1} 2^{-n} \leq \frac{\sqrt{2}}{\pi} \frac{1}{2N+3} 2^{-N} \le \frac{\sqrt{2}}{\pi}2^{-N},
\]
which finishes the proof.

\subsection{Singular values of $I_r$}
Since $I_r$ is a product of operators with norm at most $1$, it is obvious that $\sigma_n(I_r)\le 1$ for all $n\in\N$, so we will focus on $n > 10r$. As we already mentioned, since $I_r^* I_r = S_{[0,r],[0,1]}$, we can in principle extract the required bound from Theorem \ref{Bonami2}, as well as from many previous results in the literature (in fact, we can replace $10$ by any constant larger than $1$). To keep our argument self-contained we will instead present a complex-analytic proof following the argument in \cite{kulikov_2026}.

Singular values of $I_r$ are square roots of the eigenvalues of $S_{[0,r], [0,1]}$. By Lemma \ref{enlargement} if we increase $r$ the eigenvalues can only increase. Thus, it is enough to consider the case $n = 10r$. Since the eigenvalues depend only on the products of length of the intervals, for convenience we will instead consider the eigenvalues of $S_{[-\frac{1}{2},\frac{1}{2}],[-\frac{r}{2},\frac{r}{2}]}$.

Consider first $n$ normalized eigenfunctions $f_1, f_2, \ldots , f_n$ of $S_{[-\frac{1}{2},\frac{1}{2}],[-\frac{r}{2},\frac{r}{2}]}$ with eigenvalues $\lambda_1(r)\geq \dots \geq \lambda_n(r)$. If $\lambda_n(r) = 0$ then there is nothing to prove (in fact, it is not hard to see again by the max-min principle that this is never the case), so we assume without loss of generality that $\lambda_n(r)>0$. It follows immediately that $f_1,\dots, f_n$ are supported on $[-\frac{1}{2},\frac{1}{2}]$. In particular, their Fourier transforms
$$g_k(z) = \hat{f}_k(z) =\int_{-\frac{1}{2}}^\frac{1}{2} f_k(t)e^{-2\pi i zt}dt$$
are defined for all $z\in\Cm$ and they are holomorphic functions of $z$. 

Let us fix $n-1$ distinct complex numbers $z_1, z_2, \ldots , z_{n-1}$. By simple linear algebra we can find scalars $a_1,\dots, a_n \in \mathbb{C}$ not all zero such that the linear combination $g(z) = \sum_{k=1}^n a_kg_k(z)$ satisfies $g(z_1) = \dots = g(z_{n-1}) = 0$. By scaling we can assume that $\|g\|_2 = 1$.

First, we have a uniform pointwise bound on $g(x+iy)$:
\begin{equation}\label{g uniform bound}
|g(x+iy)| \le e^{\pi |y|}.
\end{equation}
Indeed, putting $f = \sum a_k f_k$, we have
$$|g(x+iy)| = \left|\int_{-\frac{1}{2}}^\frac{1}{2} f(t)e^{-2\pi i (x+iy)t}dt\right| \le e^{\pi |y|}\int_{-\frac{1}{2}}^\frac{1}{2} |f(t)|dt \le e^{\pi |y|},$$
where in the last step we used the Cauchy--Schwarz inequality. By the max-min characterization of the eigenvalues we also have 
\begin{equation}\label{g lower bound}
\lambda_n(r)^2 \leq \|S_{[-\frac{1}{2},\frac{1}{2}],[-\frac{r}{2},\frac{r}{2}]} f\|^2 \leq \int_{-r/2}^{r/2} \abs{g(x)}^2 \, dx.
\end{equation}
We want to arrive at a contradiction by choosing an appropriate sequence $z_l$.

We will assume for now that $n\ge 1000$. The cases $n < 1000$ we will cover at the end. Our sequence will be an arithmetic progression with $z_1 = -r$, $z_{n-1} = r$. Its step is $s=\frac{2r}{n-1}$. Since $n = 10r$, $n\ge 1000$ we have $s \le \frac{1}{4}$. We want to bound $|g(x_0)|$ for $x_0\in [-\frac{r}{2},\frac{r}{2}]$ assuming that $|g(x+iy)|\le e^{\pi |y|}$ for all $x, y\in \R$ and $g(z_l) = 0$. We will use Jensen's formula for the disk centred at $x_0$ of radius $\frac{r}{2}$:

$$\log |g(x_0)| = \int_0^1 \log \left|g\left(x_0 + \frac{r}{2}e^{2\pi i t}\right)\right|dt + \sum_{|z-x_0| < \frac{r}{2}, g(z)=0} \log \frac{2|z-x_0|}{r}.$$

Since each term in the sum is negative, by leaving in it only $z_l$'s we get an inequality
\begin{equation}\label{Jensen}
\log |g(x_0)| \le \int_0^1 \log \left|g\left(x_0 + \frac{r}{2}e^{2\pi i t}\right)\right|dt + \sum_{|z_l-x_0| < \frac{r}{2}} \log \frac{2|z_l-x_0|}{r}.
\end{equation}
For the integral term we have $\left|g\left(x_0 + \frac{r}{2}e^{2\pi i t}\right)\right| \le e^{\pi \frac{r}{2}|\sin(2\pi t)|}$ by \eqref{g uniform bound}, hence 
$$\int_0^1 \log \left|g\left(x_0 + \frac{r}{2}e^{2\pi i t}\right)\right|dt \le \int_0^1\frac{\pi r}{2} |\sin(2\pi t)|dt = r.$$
For the sum over $z_l$'s to the right of $x_0$ we have
\begin{align*}
\sum_{x_0 < z_l< x_0 + \frac{r}{2}} \log \frac{2|z_l-x_0|}{r} &\leq \frac{1}{s} \sum_{x_0 < z_l< x_0 + \frac{r}{2}} \int_{z_l}^{z_l + s}  \log \frac{2(\omega-x_0)}{r} d\omega \\
&= \frac{1}{s}\int_{\min\limits_{z_l > x_0} z_l}^{\max\limits_{z_l < x_0+\frac{r}{2}} z_l+s} \log \frac{2(\omega-x_0)}{r} d\omega \\
&\le\frac{1}{s}\int_{s}^{\frac{r}{2}+s}  \log \frac{2\omega}{r} d\omega \leq - \frac{1}{4s} r + 1.
\end{align*}
where in the first step we used monotonicity of the function $\log \frac{2t}{r}$ for $t > 0$, in the second step we combined the integrals into one, in the third step we used that the function $\log \frac{2t}{r}$ is negative for $t < \frac{r}{2}$ and positive for $t > \frac{r}{2}$ and that the minimum of $z_l$'s is at most $x_0+s$ and the maximum of $z_l$'s is at least $x_0+\frac{r}{2}-s$ because $x_0 + \frac{r}{2} \le r$, so we can not get past the interval $[-r.r]$, and in  the last step we used inequality $\log u \le u-1$. For the sum over $z_l$'s to the left of $x_0$ we get exactly the same upper bound. 

Plugging all our bounds into \eqref{Jensen} together with $s\le \frac{1}{4}$ we get
$$|g(x_0)|\le e^{2-r}.$$
Therefore, by \eqref{g lower bound}
$$\lambda_n(r)^2\le\int_{-\frac{r}{2}}^\frac{r}{2} |g(x)|^2dx \le re^{4-2r} \le e^{4-r},$$
as required. 

For $n \le 1000$ we can simply estimate $\lambda_n(r) \le 1$, which is consistent with our bound if we increase $C$.

\section{Trace class conditions}
In this section we prove Theorem \ref{trace class sufficient} and Theorem \ref{trace class necessary}. 

\subsection{Sufficient condition}
We begin with Theorem \ref{trace class sufficient}, so consider bounded measurable sets $A,B\subseteq \R^d$ and let $f$ be a trace class admissible function, that is
\begin{equation}\label{f condition again}
\int_0^{\delta} \frac{M_0 f(\eps) \log\left(\frac{1}{\eps}\right)^{d-1}}{\eps\left(\log\log\left(\frac{1}{\eps}\right)\right)^d} \, d\eps < \infty.
\end{equation}

Since $A$ and $B$ are bounded they have finite measure and so $S_{A,B}$ is a compact operator. We need to show that $\sum_{n=1}^\infty |f(\lambda_n(A,B))| < \infty$. Since $\lambda_n(A,B)\to 0$ there are only finitely many $n$ such that $\lambda_n(A,B) > \frac{\delta}{2}$. So, it is enough to show that the tail $\sum_{\lambda_n(A,B)\leq \frac{\delta}{2}}|f(\lambda_n(A,B))|$ is finite. We will also ignore all of the eigenvalues equal to $0$ since $f(0) = 0$ for all trace class admissible for $L^2(\R^d)$ functions $f$.
We decompose the sum dyadically and bound $f$ from above by $M_0f$:
\begin{align*}
\sum_{0<\lambda_n(A,B)\leq \frac{\delta}{2}}|f(\lambda_n(A,B))| &= \sum_{k=0}^\infty \sum_n 1_{\{\delta 2^{-2^{k+1}}<\lambda_n(A,B) \le \delta 2^{-2^k} \}} \, |f(\lambda_n(A,B))|\\
& \leq \sum_{k=0}^\infty M_0 f\left(\delta 2^{-2^k}\right)\left(N_{\delta 2^{-2^{k+1}}}(A,B) - N_{\delta 2^{-2^k}}(A,B)\right)\\
&\le \sum_{k=0}^\infty M_0 f\left(\delta 2^{-2^k}\right)N_{\delta 2^{-2^{k+1}}}(A,B).
\end{align*}
We chose this exact decomposition because on this scale the estimates in \eqref{Lambda -, equivalence, two cubes} roughly double at each step. 
Since $A$ and $B$ are bounded, there exist fixed cubes $V_A, V_B$ such that $A\subseteq V_A$, $B\subseteq V_B$. By Lemma \ref{enlargement} we have $N_\eps(A,B)\le N_\eps(V_A,V_B)$ for all $0 < \eps < 1$. Hence,
\begin{align*}
\sum_{k=0}^\infty M_0 f\left(\delta 2^{-2^k}\right)N_{\delta 2^{-2^{k+1}}}(A,B)&\leq \sum_{k=0}^\infty M_0 f\left(\delta 2^{-2^k}\right)N_{\delta 2^{-2^{k+1}}}(V_A,V_B)\\
&= \sum_{k=0}^\infty M_0 f\left(\delta 2^{-2^k}\right)(N_{1/2}(V_A,V_B)+\Lambda^-_{\delta 2^{-2^{k+1}}}(V_A,V_B)).
\end{align*}
Since the integral \eqref{f condition again} converges, $M_0  f(\eps)$ is finite for all $\eps < \delta$. In particular, it suffices to show that the tail
\[
\sum_{k=k_0}^\infty M_0 f\left(\delta 2^{-2^k}\right)(N_{1/2}(V_A,V_B)+\Lambda^-_{\delta 2^{-2^{k+1}}}(V_A,V_B))
\]
is finite for some $k_0 >0$. Note that $N_{1/2}(V_A,V_B)$ is a fixed number while $\Lambda^-_{\delta 2^{-2^{k+1}}}(V_A,V_B)$ tends to infinity, so for $k_0$ large enough we have $N_{1/2}(V_A,V_B) \leq \Lambda^-_{\delta 2^{-2^{k+1}}}(V_A,V_B)$ for $k\geq k_0$. By possibly choosing $k_0$ larger still we can apply the upper bound \eqref{Lambda -, equivalence, two cubes} (note that $c$, which is the product of side lengths of $V_A$ and $V_B$, is a constant for the present discussion), and find
\begin{align*}
\MoveEqLeft \sum_{k=k_0}^\infty M_0 f\left(\delta 2^{-2^k}\right)(N_{1/2}(V_A,V_B)+\Lambda^-_{\delta 2^{-2^{k+1}}}(V_A,V_B))\\
&\lesssim \sum_{k=k_0}^\infty M_0 f\left(\delta 2^{-2^k}\right) \left( \frac{\log\left(\tfrac{1}{\delta 2^{-2^{k+1}}}\right)}{\log \left(\tfrac{\log\left(\frac{1}{\delta 2^{-2^{k+1}}}\right)}{c}\right)}\right)^d.
\end{align*}
A direct computation shows that the expression in brackets is proportional to $\frac{2^k}{k}$. So, it remains to show that $\sum_{k=k_0}^\infty M_0 f\left(\delta 2^{-2^k}\right) \frac{2^{kd}}{k^d}$ is finite. We have the following chain of inequalities:
\begin{align*}
\sum_{k=k_0}^\infty M_0 f\left(\delta 2^{-2^k}\right) \frac{2^{kd}}{k^d} &= \sum_{k=k_0}^\infty M_0 f\left(\delta 2^{-2^k}\right) \frac{2^{kd}}{k^d}  \frac{1}{\log(2) 2^{k-1}} \int_{\delta 2^{-2^k}}^{\delta 2^{-2^{k-1}}} \frac{1}{\eps} \, d\eps\\
&\lesssim \sum_{k=k_0}^\infty  M_0 f\left(\delta 2^{-2^k}\right) \int_{\delta 2^{-2^k}}^{\delta 2^{-2^{k-1}}} \frac{\log\left(\frac{1}{\eps}\right)^{d-1}}{\eps\left(\log\log\left(\frac{1}{\eps}\right)\right)^d} \, d\eps\\
&\leq \sum_{k=k_0}^\infty \int_{\delta 2^{-2^k}}^{\delta 2^{-2^{k-1}}} \frac{M_0 f(\eps) \log\left(\frac{1}{\eps}\right)^{d-1}}{\eps\left(\log\log\left(\frac{1}{\eps}\right)\right)^d} \, d\eps\\
& \leq \int_0^{\delta} \frac{M_0 f(\eps) \log\left(\frac{1}{\eps}\right)^{d-1}}{\eps\left(\log\log\left(\frac{1}{\eps}\right)\right)^d} \, d\eps < \infty.
\end{align*}
We used that for $\delta 2^{-2^k} < \eps < \delta 2^{-2^{k-1}}$ the term $\frac{\log\left(\frac{1}{\eps}\right)^{d-1}}{\left(\log\log\left(\frac{1}{\eps}\right)\right)^d}$ is proportional to $\frac{2^{k(d-1)}}{k^d}$ and that $M_0 f$ is non-decreasing. This finishes the proof of Theorem \ref{trace class sufficient}.

\subsection{Necessary condition}
We turn to the proof of Theorem \ref{trace class necessary}, so assume that $A, B \subseteq \R^d$ are sets with non-empty interiors such that $S_{A,B}$ is compact, and consider a function $f$ such that $\abs{f(x)}$ is non-decreasing close to $0$, say for $0\le x\le\beta$. Assume that $f(S_{A,B})$ is trace class. We need to show that 
\[
\int_0^{\delta} \frac{M_0 f(\eps) \log\left(\frac{1}{\eps}\right)^{d-1}}{\eps\left(\log\log\left(\frac{1}{\eps}\right)\right)^d} \, d\eps < \infty
\]
for some $\delta > 0$. Note that for $0 \le \eps \le \beta$ we have $M_0f(\eps) = |f(\eps)|$, so if $\delta \le \beta$ then 
\[
\int_0^{\delta} \frac{M_0 f(\eps) \log\left(\frac{1}{\eps}\right)^{d-1}}{\eps\left(\log\log\left(\frac{1}{\eps}\right)\right)^d} \, d\eps = \int_0^{\delta} \frac{|f(\eps)| \log\left(\frac{1}{\eps}\right)^{d-1}}{\eps\left(\log\log\left(\frac{1}{\eps}\right)\right)^d} \, d\eps.
\]
As in the proof of Theorem \ref{trace class sufficient}, we split $\sum |f(\lambda_n(A,B))|$ into the sum over eigenvalues larger than $\beta$ and at most $\beta$ and throw away the first one as it is finite. Thus, our basic assumption is that
$$\sum_{\lambda_n(A,B)\leq \beta}|f(\lambda_n(A,B))| < \infty.$$
By definition $\lambda_n(A,B)$ is non-increasing in $n$ so this is the sum from some $n_0$ to infinity. Since $A$ and $B$ have non-empty interiors, we can find cubes $U_A$ and $U_B$ such that $U_A\subset A$, $U_B\subset B$. By Lemma \ref{enlargement} we have $\lambda_n(A,B)\ge \lambda_n(U_A, U_B)$. Using monotonicity of $|f|$ we get
$$\sum_{\lambda_n(A,B)\leq \beta}|f(\lambda_n(A,B))|  \ge \sum_{n=n_0}^\infty |f(\lambda_n(U_A, U_B))|.$$

Next, we do the dyadic decomposition but with a large constant $K\geq 2$ to be fixed later. Pick $k_0$ so that $2^{-K^{k_0}} \leq \lambda_{n_0}(U_A,U_B)$ and write, using 
monotonicity of $|f|$,
\begin{align*}
\sum_{n=n_0}^\infty |f(\lambda_n(U_A, U_B))| &\ge \sum_{k=k_0}^\infty \sum_n 1_{\{ 2^{-K^{k+1}}<\lambda_n(U_A,U_B) \le 2^{-K^k} \}} \, |f(\lambda_n(U_A,U_B))|\\
&\geq \sum_{k=k_0}^\infty \left|f\left(2^{-K^{k+1}}\right)\right| \left( N_{2^{-K^{k+1}}}(U_A, U_B) - N_{2^{-K^{k}}}(U_A, U_B)\right)\\
& = \sum_{k=k_0}^\infty \left|f\left(2^{-K^{k+1}}\right)\right| \left( \Lambda^-_{2^{-K^{k+1}}}(U_A, U_B) - \Lambda^-_{2^{-K^{k}}}(U_A, U_B)\right)
\end{align*}

We intend to apply \eqref{Lambda -, equivalence, two cubes}. Note here that there are $c_1, c_2>0$ independent of $K$ such that for all $k \geq k_0(K)$
$$c_1\frac{K^{kd}}{(\log(K)k)^d}\le \Lambda^-_{2^{-K^{k}}}(U_A, U_B) \le c_2 \frac{K^{kd}}{(\log(K)k)^d}.$$
We choose $K$ so that $K^d \ge \frac{2^{d+1}c_2}{c_1}$ which fixes $k_0$ and ensures that 
\[
\Lambda^-_{2^{-K^{k+1}}}(U_A, U_B) \ge 2\Lambda^-_{2^{-K^{k}}}(U_A, U_B)
\]
whenever $k\geq k_0$. Hence, 
\begin{align*}
\MoveEqLeft \sum_{k=k_0}^\infty \left|f\left(2^{-K^{k+1}}\right)\right| \left( \Lambda^-_{2^{-K^{k+1}}}(U_A, U_B) - \Lambda^-_{2^{-K^{k}}}(U_A, U_B)\right) \\
&\geq  \sum_{k=k_0}^\infty \left|f\left(2^{-K^{k+1}}\right)\right| \Lambda^-_{2^{-K^{k}}}(U_A, U_B)\\
&\gtrsim \sum_{k=k_0}^\infty \left|f\left(2^{-K^{k+1}}\right)\right|   \frac{K^{kd}}{k^d}.
\end{align*}
Arguing as in the proof of Theorem \ref{trace class sufficient} and using monotonicity of $|f|$ we obtain
$$\sum_{k=k_0}^\infty \left|f\left(2^{-K^{k+1}}\right)\right|   \frac{K^{kd}}{k^d} \gtrsim\int_0^{2^{-K^{k+1}}}\frac{|f(\eps)| \log\left(\frac{1}{\eps}\right)^{d-1}}{\eps\left(\log\log\left(\frac{1}{\eps}\right)\right)^d} \, d\eps.$$
Collecting everything, we conclude
\[
\infty > \sum_{\lambda_n(A,B)\leq \beta}|f(\lambda_n(A,B))| \gtrsim \sum_{k=k_0}^\infty \left|f\left(2^{-K^{k+1}}\right)\right|   \frac{K^{kd}}{k^d} \gtrsim\int_0^{2^{-K^{k+1}}}\frac{|f(\eps)| \log\left(\frac{1}{\eps}\right)^{d-1}}{\eps\left(\log\log\left(\frac{1}{\eps}\right)\right)^d} \, d\eps,
\]
finishing the proof.

\section{Area laws}
In this section we prove Theorem \ref{good area law} and Theorem \ref{bad area law}.

\subsection{Uniform bounds}
We now prove \eqref{area law uniform bound} and Theorem \ref{bad area law}. For both of them we can without loss of generality assume that $f(1) = 0$. Indeed, denoting $\tilde{f}(\theta) = f(\theta) - \theta f(1)$ we have $\tilde{f}(1) = 0$, $${\rm Tr}f(S_{cA,B}) = {\rm Tr}\tilde{f}(S_{cA,B}) + f(1){\rm Tr}(S_{cA,B}) ={\rm Tr}\tilde{f}(S_{cA,B}) +c^d|A||B|f(1)$$ 
and 
\[
\int_0^1 \frac{\tilde{f}(\theta) - \theta\tilde{f}(1)}{\theta(1-\theta)} \, d\theta = \int_0^1 \frac{f(\theta) - f(1)\theta}{\theta(1-\theta)} \, d\theta.
\]
Note that $\tilde{f}$ is also area law admissible and trace class admissible for $L^2(\R^d)$ if $f$ is. Thus, it suffices to establish the behaviour of ${\rm Tr}\tilde{f}(S_{cA,B})$ for \eqref{area law two term}. To get the claimed uniform bound \eqref{area law uniform bound} we also need to show that $$ \int_0^1 \frac{M_0\tilde{f}(\eps) + M_1\tilde{f}(\eps)}{\eps}d\eps \leq C\int_0^1 \frac{M_0f(\eps) + M_1f(\eps)}{\eps}d\eps$$ for some absolute constant $C > 0$. We have $M_0 \tilde{f}(\eps) \le M_0 f(\eps) + \eps|f(1)|$ and $M_1\tilde{f}(\eps) \le M_1 f(\eps) + \eps |f(1)|$, so we just have to show that $$\lambda\int_0^1 \frac{M_0f(\eps) + M_1f(\eps)}{\eps}d\eps \ge |f(1)|$$ for some absolute constant $\lambda > 0$. For $\eps > \frac{1}{2}$ we have $M_0f(\eps) \ge |f(\frac{1}{2})|$ and $M_1f(\eps) \ge |f(1) - f(\frac{1}{2})|$, thus $M_0f(\eps) +M_1f(\eps)\ge |f(1)|$. Integrating this we get
$$\int_0^1\frac{M_0f(\eps) + M_1f(\eps)}{\eps}d\eps \ge \log(2)|f(1)|,$$
so $\lambda = \frac{1}{\log(2)}$ works.

We start with proving the uniform bound \eqref{area law uniform bound}, so assume that $A\subseteq \R^d$ is a set whose boundary $\partial A$ has finite upper Minkowski content and $B\subseteq\R^d$ is a finite union of parallelepipeds with disjoint interiors, and let $f$ be both area law admissible and trace class admissible for $L^2(\R^d)$ with $f(1) = 0$. We need to show that, for $c$ large enough dependent on $A$, $B$, and $f$,
\[
|\operatorname{Tr} f(S_{cA, B})| \leq  C(A,B)c^{d-1}\log(c) \int_{0}^1 \frac{M_0 f(\eps) + M_1 f(\eps)}{\eps} \, d\eps
\]
The proof of Theorem \ref{bad area law} will be almost identical. Take $\alpha$ from the statement of Theorem \ref{union and good}. We split the eigenvalues into the relevant regimes:
\begin{equation}\label{three sums}
\begin{aligned}
|\operatorname{Tr} f(S_{cA, B})| &= \left|\sum_{n=1}^\infty f(\lambda_n(cA,B))\right| \le \sum_{n = 1}^\infty |f(\lambda_n(cA,B))|\\
&=\sum_{\lambda_n(cA,B) >1-\alpha^{-c}}|f(\lambda_n(cA,B))|+\sum_{1-\alpha^{-c}\ge\lambda_n(cA,B) \ge\alpha^{-c}}|f(\lambda_n(cA,B))|\\
&\quad +\sum_{\alpha^{-c}>\lambda_n(cA,B) >0}|f(\lambda_n(cA,B))|.
\end{aligned}
\end{equation}
We will bound each of these three sums separately. The first sum is empty by Theorem \ref{union and good}, since there are no eigenvalues larger than $1-\alpha^{-c}$. We proceed with the bound for the second sum. We split the sum into eigenvalues close to 1 and close to 0 and decompose dyadically like in the proof of Theorem \ref{trace class sufficient}. Let $k_0(c)$ be such that $2^{-2^{k_0(c)+1}} < \alpha^{-c} \le 2^{-2^{k_0(c)}}$. We write
\begin{align*}
 \sum_{1-\alpha^{-c}\ge\lambda_n(cA,B)\ge \alpha^{-c}}|f(\lambda_n(cA,B))| &\leq \sum_{k=0}^{k_0(c)} \sum_{n}  1_{\{ 2^{-2^{k+1}}<\lambda_n(cA,B) \le 2^{-2^k} \}} \,  |f(\lambda_n(cA,B))|\\
 &\quad+ \sum_{k=0}^{k_0(c)} \sum_{n}  1_{\{ 2^{-2^{k+1}}<1-\lambda_n(cA,B) \le 2^{-2^k} \}} \,  |f(\lambda_n(cA,B))|\\
 &\leq   \sum_{k=0}^{k_0(c)} \sum_{n}  1_{\{ 2^{-2^{k+1}}<\lambda_n(cA,B) \le 2^{-2^k} \}} \,  M_0 f\left(2^{-2^k}\right)\\
 &\quad+ \sum_{k=0}^{k_0(c)} \sum_{n}  1_{\{ 2^{-2^{k+1}}<1-\lambda_n(cA,B) \le 2^{-2^k} \}} \,  M_1 f\left(2^{-2^k}\right)\\
 &\le \sum_{k=0}^{k_0(c)} \left(M_0 f\left(2^{-2^k}\right) + M_1 f\left(2^{-2^k}\right)\right)\Lambda_{2^{-2^{k+1}}}(cA,B).
\end{align*}
In the first step we did the dyadic splitting (with possibly overcounting beyond $\alpha^{-c}$), in the second step we bounded $f$ by $M_0 f$ and $M_1 f$, respectively, and in the third step we crudely bounded the number of eigenvalues in the corresponding intervals by $\Lambda_{\eps}(cA,B)$ for a suitable $\eps$.

For $k < k_0(c)$ we will use \eqref{eps not tiny}. For $k = k_0(c)$ we have to use \eqref{eps tiny} but one can check that in this regime the estimate is proportional to the one in \eqref{eps not tiny} so we will use \eqref{eps not tiny} here as well. We get
\begin{equation}\label{what changes}
\Lambda_{2^{-2^{k+1}}}(cA,B) \lesssim c^{d-1} 2^{k} \log\left( \alpha c2^{-k} \right) \lesssim c^{d-1}2^k \log(c). 
\end{equation}
Applying this we find 
\begin{align*}
\MoveEqLeft \sum_{1-\alpha^{-c}\ge\lambda_n(cA,B)\ge \alpha^{-c}}|f(\lambda_n(cA,B))| \leq \sum_{k=0}^{k_0(c)} \left(M_0 f\left(2^{-2^k}\right) + M_1 f\left(2^{-2^k}\right)\right)\Lambda_{2^{-2^{k+1}}}(cA,B)\\
&\lesssim \sum_{k=0}^{k_0(c)} \left(M_0 f\left(2^{-2^k}\right) + M_1 f\left(2^{-2^k}\right)\right) c^{d-1} 2^k \log(c).
\end{align*}
By the simple identity 
\[
2^{k} = \frac{2}{\log(2)}\int_{2^{-2^{k}}}^{2^{-2^{k-1}}} \frac{1}{\eps} \, d\eps,
\]
we conclude 
\begin{align*}
\MoveEqLeft \sum_{1-\alpha^{-c}\ge\lambda_n(cA,B)\ge \alpha^{-c}}|f(\lambda_n(cA,B))|\\
&\lesssim \sum_{k=0}^{k_0(c)} \left(M_0 f\left(2^{-2^k}\right) + M_1 f\left(2^{-2^k}\right)\right) c^{d-1} \log(c) \frac{2}{\log(2)}\int_{2^{-2^{k}}}^{2^{-2^{k-1}}} \frac{1}{\eps} \, d\eps\\
& \leq c^{d-1}\log(c) \frac{2}{\log(2)}  \sum_{k=0}^{k_0(c)} \int_{2^{-2^{k}}}^{2^{-2^{k-1}}} \frac{M_0 f(\eps) + M_1 f( \eps)}{\eps} \, d\eps\\
& \leq c^{d-1}\log(c) \frac{2}{\log(2)}\int_0^{2^{-1/2}} \frac{M_0 f(\eps)+M_1 f(\eps)}{\eps}d\eps,
\end{align*}
which is of the required form for \eqref{area law uniform bound}.

We finally turn to the third sum in \eqref{three sums}. Using the exact same dyadic decomposition we will ultimately get
\begin{equation}\label{log log or log 2 or loglog2}
\sum_{\alpha^{-c}>\lambda_n(cA,B) >0}|f(\lambda_n(cA,B))| \le \sum_{k=k_0(c)}^\infty M_0 f\left(2^{-2^k}\right) \Lambda_{2^{-2^{k+1}}}^-(cA,B).
\end{equation}
Since we are now in the regime $\eps < \alpha^{-c}$, it follows from \eqref{eps tiny} that
$$\Lambda_{2^{-2^{k+1}}}^-(cA,B) \lesssim  \left(\frac{2^{k+1}}{\log(\frac{2^{k+1}}{c})}\right)^d.$$
 We claim that $\left(\frac{2^{k+1}}{\log\left(\frac{2^{k+1}}{c}\right)}\right)^d\le \frac{2^{(k+1)d}}{(k+1)^d}C(d,\alpha)\log(c)^d$ for $k \ge k_0(c)$, where $C(d,\alpha)$ is some constant depending only on $d$ and $\alpha$. Indeed, this is equivalent to $\frac{k+1}{\log\left(\frac{2^{k+1}}{c}\right)} \le C(d,\alpha)^{1/d}\log(c)$. The left-hand side is equal to $\frac{1}{\log(2)-\frac{\log(c)}{k+1}}$ which is clearly a decreasing function of $k$, so it is enough to verify the inequality for $k = k_0(c)$. Using $2^{-2^{k_0(c)+1}}\le \alpha^{-c}\le 2^{-2^{k_0(c)}}$ and $\alpha \ge 4$ we get 
$$\frac{k_0(c)+1}{\log\left(\frac{2^{k_0(c)+1}}{c}\right)}  \le \frac{\log_2(c)+\log_2 (\log_2(\alpha))+1}{\log(\log_2(\alpha))}\le \log(c) \frac{\frac{1}{\log(2)}+\log_2(\log_2(\alpha)) + 1}{\log(2)}.$$
which gives the claim with $C(d,\alpha)^{1/d}=\frac{\frac{1}{\log(2)}+\log_2(\log_2(\alpha)) + 1}{\log(2)}$.

Plugging this into \eqref{log log or log 2 or loglog2} and arguing as in the proof of Theorem \ref{trace class sufficient} we get
\begin{equation}\label{trace class condition bound}
\begin{aligned}\sum_{\alpha^{-c}>\lambda_n(cA,B) >0}|f(\lambda_n(cA,B))| &\lesssim \log(c)^d\sum_{k=k_0(c)}^\infty  M_0 f\left(2^{-2^k}\right) \frac{2^{(k+1)d}}{(k+1)^d}\\
&\lesssim \log(c)^d\int_0^{2^{-2^{k_0(c)-1}}}\frac{M_0 f(\eps)\log\left(\frac{1}{\eps}\right)^{d-1}}{\eps \left(\log\log\left(\frac{1}{\eps}\right)\right)^d} \, d\eps.
\end{aligned}
\end{equation}
The final integral is finite for $c$ large enough since $f$ is trace class admissible for $L^2(\R^d)$. Moreover, as $c\to \infty$ we have $k_0(c)\to \infty$, thus the integral can be as small as we like. In particular, for $c > c_0(f, A, B)$ we can assume that 
$$ \int_0^{2^{-2^{k_0(c)-1}}}\frac{M_0 f(\eps)\log\left(\frac{1}{\eps}\right)^{d-1}}{\eps \left(\log\log\left(\frac{1}{\eps}\right)\right)^d} \, d\eps\leq \int_0^1 \frac{M_0 f(\eps) + M_1 f(\eps)}{\eps}d\eps$$
assuming that $\int_0^1 \frac{M_0 f(\eps) + M_1 f(\eps)}{\eps}d\eps$ is non-zero. But it can be zero only if $f\equiv 0$ in which case the theorem is trivial. Thus, for $c > c_0(f, A, B)$ we have
$$|{\rm Tr} f(S_{cA,B})|\le C(A,B)(c^{d-1}\log(c)+\log(c)^d)\int_0^1\frac{M_0f(\eps)+M_1f(\eps)}{\eps}d\eps.$$
For $c > 1$ we have $\log(c) \le c$, which gives the desired estimate with at most doubling the constant $C(A,B)$.

To prove Theorem \ref{bad area law} the only thing that we have to change is that in \eqref{what changes} we will have $\log^2(c)$ instead of $\log(c)$, which leads to the final error bound $O(c^{d-1}\log^2(c))$.
\begin{remark}
 It was absolutely crucial for our argument that the third sum in \eqref{three sums} turned out to be $o(c^{d-1}\log(c))$, otherwise it might happen that the bound does not hold. For $d\ge 2$ the third sum is $O(\log(c)^d) = o(c^{d-1}\log(c))$, so in this case it is enough for us to only know the value of $\delta$ and the integral in the definition of trace class admissibility for $L^2(\R^d)$. 
 
 For $d = 1$ our proof as written requires us to also know how fast does the integral $\int_0^t \frac{M_0f(\eps)}{\eps \log\log(\frac{1}{\eps})}d\eps$ converges to $0$ as $t\to 0$ to get $O(\log(c))$ with as small of a constant as we like. However, coincidentally $d = 1$ is also the only case where trace class admissibility for $L^2(\R^d)$ is weaker than area law admissibility. In particular, 
 $$\int_0^t \frac{M_0f(\eps)}{\eps \log\log(\frac{1}{\eps})}d\eps \le \frac{1}{\log\log(\frac{1}{t})}\int_0^t \frac{M_0f(\eps)}{\eps}d\eps.$$
 In this way we would get that the third sum is $O(1)$ (and even $o(1)$) for $d = 1$, and the value $c_0(f, A, B)$ would even be independent of $f$.
\end{remark}
\subsection{Two-term asymptotics}
It remains to establish the two-term asymptotic expansion \eqref{area law two term} under the same assumptions on $A$ and $B$ as in the previous subsection, but with the additional assumption that $f$ is Riemann integrable on $[\eps, 1-\eps]$ for all $0<\eps<1/2$. We will also assume without loss of generality that $f$ is real-valued, as we can first prove the result for ${\rm Re}f$ and ${\rm Im}f$ separately, which satisfy all of our assumptions if $f$ satisfies them, and use that both sides of \eqref{area law two term} are linear in $f$. Lastly, as before, we will also assume that $f(1) = 0$ by subtracting $\theta f(1)$ from $f$.

Recall that \eqref{area law two term} holds for polynomials \cite{EE_paper}. We will first extend \eqref{area law two term} to all continuous functions supported on $[\eps, 1-\eps]$ for some $\eps > 0$ and then deduce from this that \eqref{area law two term} holds for $f = 1_{[a,b]}$ for all $0 < a < b < 1$. This is the same argument that was used in \cite{widom_landau} to prove Theorem \ref{Slepian}.

Let $f:[0,1]\to \R$ be a continuous function such that $f(\theta) = 0$ if $0 < \theta < \eps$ or $1-\eps < \theta < 1$. The function $g(\theta) = \frac{f(\theta)}{\theta(1-\theta)}$ is clearly continuous on $[0, 1]$. Given $\delta > 0$, by the Stone--Weierstrass theorem, we can find a polynomial $P$ such that $|P(\theta)-g(\theta)| \le \delta$ for all $\theta\in [0,1]$. We clearly have $$(P(\theta)+\delta)\theta(1-\theta) \ge f(\theta)\ge(P(\theta)-\delta)\theta(1-\theta).$$
Denoting $Q(\theta)=(P(\theta)+\delta)\theta(1-\theta)$, $R(\theta)=(P(\theta)-\delta)\theta(1-\theta)$ we have
\[
\operatorname{Tr} Q(S_{cA,B}) \geq \operatorname{Tr} f(S_{cA,B}) \geq \operatorname{Tr} R(S_{cA,B})
\]
Dividing by $c^{d-1}\log(c)$, applying \eqref{area law two term} to $Q$ and $R$ and taking the limit $c\to \infty$ we get
\begin{align*}
I(A,B)\int_0^1 \frac{Q(\theta)}{\theta(1-\theta)}d\theta &\ge \limsup_{c\to \infty}\frac{\operatorname{Tr} f(S_{cA,B})}{c^{d-1}\log(c)}\\
&\ge \liminf_{c\to\infty} \frac{\operatorname{Tr} f(S_{cA,B})}{c^{d-1}\log(c)}\ge I(A,B)\int_0^1 \frac{R(\theta)}{\theta(1-\theta)}d\theta
\end{align*}
The difference between the left-hand side and the right-hand side is at most $2\delta I(A,B)$, so taking the limit $\delta \to 0$ and using the squeeze theorem we get 
$$\lim_{c\to \infty} \frac{{\rm Tr} f(S_{cA,B})}{c^{d-1}\log(c)} = I(A,B)\int_0^1 \frac{f(\theta)}{\theta(1-\theta)}d\theta,$$
which establishes \eqref{area law two term} if $f$ is continuous and vanishes outside of $[\eps, 1-\eps]$.

Now, we turn to $f(\theta) = 1_{[a,b]}(\theta)$ for $0 < a < b < 1$. For small enough $\delta > 0$ consider the continuous functions $Q_\delta(\theta) = \max \left( 1-\frac{\operatorname{dist}(\theta, [a,b])}{\delta}, 0 \right)$ and $R_\delta(\theta) = \max \left( 1-\frac{\operatorname{dist}(\theta, [a+\delta,b-\delta])}{\delta}, 0 \right)$ which satisfy $Q_\delta(\theta) \geq 1_{[a,b]}(\theta) \geq R_\delta(\theta)$. Letting $\delta \to 0$ and applying the squeeze theorem again we deduce \eqref{area law two term} for $1_{[a,b]}$.

Now, we turn to the general $f:[0,1]\to \R$ with $f(1) = 0$ which is area law admissible and trace class admissible for $L^2(\R)$ and which is Riemann integrable on $[\eps, 1-\eps]$ for all $0 < \eps < \frac{1}{2}$. By the Riemann integrability on $[\eps,1-\eps]$ we can find Riemann upper and lower sums $Q_\eps \geq f \geq R_\eps$ such that
\begin{equation}\label{RQ difference}
\int_{\eps}^{1-\eps} \frac{Q_\eps(\theta) - R_\eps(\theta)}{\theta(1-\theta)} \, d\theta < \eps.
\end{equation}
Here $Q_\eps$ and $R_\eps$ are finite linear combinations of indicators of the form $1_{[a,b]}$ for $\eps \leq a <b \leq 1-\eps$. Define 
\[
G_{\eps}(\theta) = \begin{cases}
M_0f(\theta), & 0\le\theta<\eps,\\
0, & \eps \leq \theta \leq 1-\eps,\\
M_1 f(1-\theta), & 1-\eps < \theta \leq 1.
\end{cases}
\]
 We have the following chain of inequalities
\[
Q_{\eps}(\theta) + G_\eps(\theta) \geq f(\theta) \geq R_{\eps}(\theta) -G_\eps(\theta)
\]
and therefore, as before,  
\begin{equation*}
\begin{aligned}
\MoveEqLeft \limsup_{\eps\to 0^+} \left(\limsup_{c\to \infty}\frac{1}{c^{d-1}\log(c)}\operatorname{Tr}Q_{\eps}(S_{cA,B}) + \limsup_{c\to\infty}\frac{1}{c^{d-1}\log(c)}\operatorname{Tr} G_\eps(S_{cA,B})\right)\\
&\geq \limsup_{c\to \infty} \frac{1}{c^{d-1}\log(c)} \operatorname{Tr}f(S_{cA,B}) \geq \liminf_{c\to \infty} \frac{1}{c^{d-1}\log(c)} \operatorname{Tr} f(S_{cA,B})\\
&\geq \liminf_{\eps\to 0^+} \left(\liminf_{c\to \infty}\frac{1}{c^{d-1}\log(c)}\operatorname{Tr}R_{\eps}(S_{cA,B}) - \limsup_{c\to\infty}\frac{1}{c^{d-1}\log(c)}\operatorname{Tr} G_\eps(S_{cA,B})\right).
\end{aligned}
\end{equation*}
Since $Q_\eps$ and $R_\eps$ are finite linear combinations of indicators of the form $1_{[a,b]}$ for $0 < a < b < 1$, we know that 
\begin{align*}
\limsup_{c\to \infty}\frac{1}{c^{d-1}\log(c)}\operatorname{Tr}Q_{\eps}(S_{cA,B}) &= I(A,B)\int_0^1 \frac{Q_\eps(\theta)}{\theta(1-\theta)}\,d\theta,\\
\liminf_{c\to \infty}\frac{1}{c^{d-1}\log(c)}\operatorname{Tr}R_{\eps}(S_{cA,B}) &= I(A,B)\int_0^1 \frac{R_\eps(\theta)}{\theta(1-\theta)}\,d\theta.
\end{align*}
Taking the limit $\eps \to 0$, it follows from \eqref{RQ difference} that
\begin{align*}
\limsup_{\eps \to 0^+} \int_0^1 \frac{Q_\eps(\theta)}{\theta(1-\theta)}\,d\theta &= \int_0^1 \frac{f(\theta)}{\theta(1-\theta)}\, d\theta,\\
\liminf_{\eps \to 0^+} \int_0^1\frac{R_\eps(\theta)}{\theta(1-\theta)}\,d\theta &= \int_0^1 \frac{f(\theta)}{\theta(1-\theta)}\, d\theta.
\end{align*}
Thus, it suffices to establish that 
\begin{equation}\label{limsupG}\limsup_{\eps \to 0^+} \limsup_{c\to\infty} \frac{1}{c^{d-1}\log(c)}\operatorname{Tr} G_\eps(S_{cA,B}) = 0.
\end{equation}
Here we use the established bound \eqref{area law uniform bound}. It gives 
$$\limsup_{c\to \infty}\frac{1}{c^{d-1}\log(c)}\operatorname{Tr} G_\eps(S_{cA,B})  \leq C(A,B) \int_0^1 \frac{M_0G_\eps(\theta)+M_1G_\eps(\theta)}{\theta}d\theta.$$
For $\theta < \eps$ we have $M_0G_\eps(\theta) = M_0f(\theta)$ and $M_1G_\eps(\theta) = M_1f(\theta)$, thus $$M_0G_\eps(\theta)+M_1G_\eps(\theta) \le M_0f(\theta)+M_1f(\theta).$$ For $\eps \le \theta < 1$ we have 
\begin{equation}\label{Mbound}
M_0 G_\eps(\theta) + M_1G_\eps(\theta) \le 2\max(M_0f(\eps), M_1f(\eps)) \le 2(M_0f(\theta)+ M_1f(\theta)).
\end{equation}
We get that $\frac{M_0 G_\eps(\theta) + M_1G_\eps(\theta)}{\theta(1-\theta)}$ is majorized by the $L^1$-function $\frac{2(M_0f(\theta)+ M_1f(\theta))}{\theta(1-\theta)}$. Thus, if we show that it tends to zero pointwise as $\eps \to 0^+$ then the dominated convergence theorem will imply \eqref{limsupG}. By \eqref{Mbound} it suffices to show that 
\begin{equation}\label{cont at 0}
\max(M_0f(\eps), M_1f(\eps))\to 0 \text{ as } \eps \to 0^+.
\end{equation}
Clearly, $\max(M_0f(\eps), M_1f(\eps))$ is non-negative and non-decreasing in $\eps$, thus the right limit $\lim_{\eps \to 0^+}\max(M_0f(\eps), M_1f(\eps)) = \nu$ exists and is non-negative. If $\nu > 0$ then the integral
$$\int_0^1 \frac{M_0f(\theta)+M_1f(\theta)}{\theta}d\theta$$
diverges, contradicting the assumption that $f$ is area law admissible. Thus, $\nu = 0$ giving us \eqref{limsupG}.

\section{Computation of $\operatorname{Tr} S_{A,B}^2$ when $A$ and $B$ are finite unions of boxes}
In this section we prove Theorem \ref{TrS2} and Theorem \ref{TrS2 explicit}. The key idea of the computation is that $\operatorname{Tr} S_{A,B}^2 = \|S_{A,B}\|_{2}^2$ is the Hilbert--Schmidt norm squared of $S_{A,B}$. Since $S_{A,B}$ is an operator with the kernel \eqref{kernel}, we have
$$\|S_{A,B}\|_2^2 = \iint_{\R^d\times \R^d}|1_{A}(x) \check{1}_B(x-y) 1_A(y)|^2\,dx\, dy.$$

Assume that $A = \cup_{k=1}^n A_k$ and $B = \cup_{l=1}^m B_l$ are finite unions of axis-parallel boxes with disjoint interiors. We have
\begin{align*}
\MoveEqLeft\iint_{\R^d\times \R^d}|1_{A}(x) \check{1}_B(x-y) 1_A(y)|^2\,dx\, dy \\
&=\sum_{k_1,k_2=1}^n \sum_{l_1, l_2=1}^m \iint_{\R^d\times \R^d} 1_{A_{k_1}}(x) 1_{A_{k_2}}(y)\check{1}_{B_{l_1}}(x-y)\check{1}_{B_{l_2}}(y-x)\, dx\, dy.
\end{align*}
For each $k_1, k_2, l_1, l_2$ the variables separate and we reduce to the integrals of the form
$$\iint_{\R\times\R} 1_{I_1}(x)1_{I_2}(y)\check{1}_{J_1}(x-y)\check{1}_{J_2}(y-x) dx\, dy$$
for some intervals $I_1, I_2, J_1, J_2\subset \R$. Doing the change of variables $x-y = z$ and applying Fubini's theorem we get
\begin{equation}\label{convolution stupid}
\iint_{\R\times\R} 1_{I_1}(x)1_{I_2}(x+z)\check{1}_{J_1}(z)\check{1}_{J_2}(-z) dx\, dz = \int_{\R} \check{1}_{J_1}(z)\check{1}_{J_2}(-z) |I_1\cap (I_2-z)| dz.
\end{equation}
We do exactly the same on the Fourier side with the intervals $J_1$ and $J_2$. For each $z\in \R$
\begin{align*}
\check{1}_{J_1}(z)\check{1}_{J_2}(-z) = \iint_{\R\times \R} 1_{J_1}(u) 1_{J_2}(v) e^{2\pi i z(u-v)} \, du \, dv = \int_{\R} e^{2\pi i z w} |J_1 \cap (J_2 - w)| \, dw. 
\end{align*}
Plugging this into \eqref{convolution stupid} we get
\begin{equation}\label{feasible}
\iint_{\R\times\R} e^{2\pi i zw}|I_1\cap (I_2-z)||J_1\cap (J_2-w)| \,dz\, dw
\end{equation}
For fixed intervals $I, I'$ the function $\R \ni t\to I\cap (I'-t)$ is zero up to some point $p_1$, then linear up to some $p_2$, then constant up to some $p_3$, linear again up to some $p_4$ and zero afterwards (if $|I| = |J|$ then $p_2 = p_3$). Thus, separating into the nine cases with respect to the pairs $I_1, I_2$ and $J_1, J_2$ we have to compute the integrals of the form
$$\int_a^b\int_c^d (\alpha z+\beta)(\gamma w + \delta)e^{2\pi i zw}dzdw.$$
A direct computation shows 
\begin{align*}
\MoveEqLeft\int_a^b\int_c^d (\alpha z+\beta)(\gamma w + \delta)e^{2\pi i zw}\, dz\, dw\\
&= \int_a^b(\gamma w+\delta)\left(\frac{(\alpha d+\beta)e^{2\pi i w d}-(\alpha c+\beta)e^{2\pi i wc}}{2\pi i w}
+\frac{\alpha\big(e^{2\pi i wd}-e^{2\pi i wc}\big)}{4\pi^2 w^2}\right)dw.
\end{align*}
This integral can be computed explicitly with the use of the exponential integral function $E_1(w)$, because $\int \frac{e^{ir}}{r^2}dr = -\frac{e^{ir}}{r} + i\int \frac{e^{ir}}{r}$. If $0\in [a, b]$ then one has to exercise a bit of care, removing a small interval $[-\eps, \eps]$, integrating over the resulting segments and letting $\eps \to 0$ with the use of the known asymptotics of $E_1(w)$ for small $w$ which will give the logarithmic terms.

Unfortunately, executing this strategy in practice is rather infeasible due to the number of terms appearing, so we will only do the simplest one-dimensional case $A = [0, c], B = [0,1]$. Our starting point will be \eqref{feasible} with $I_1 = I_2 = [0,c]$ and $J_1 = J_2 = [0,1]$. We find that
\begin{align*}
\operatorname{Tr} S_{[0,c],[0,1]}^2 &= \iint_{\R\times\R} e^{2\pi i zw}\,\left|[0,c]\cap ([0,c]-z)\right|\,\left|[0,1]\cap ([0,1]-w)\right| \,dz\, dw\\
& = \int_{-1}^1 \int_{-c}^c  e^{2\pi i z w} \left( c - |z|\right) \left( 1 - |w|\right) dz \, dw.
\end{align*}
Using Fubini's theorem and taking the integral in $w$ we get
$$\operatorname{Tr} S_{[0,c],[0,1]}^2 = \int_{-c}^c (c-|z|)\frac{\sin(\pi z)^2}{\pi^2 z^2}dz = 2\int_0^c (c-z)\frac{\sin(\pi z)^2}{\pi^2z^2}dz = \lim_{\eps \to 0^+} 2\int_\eps^c (c-z)\frac{\sin(\pi z)^2}{\pi^2z^2}dz. $$
Using $\sin^2(t) = \frac{1-\cos(2t)}{2}$ and $\int \frac{\cos(t)}{t^2} = -\frac{\cos(t)}{t} - \int \frac{\sin(t)}{t}$ we can find an explicit primitive and get
$$2\int_\eps^c (c-z)\frac{\sin(\pi z)^2}{\pi^2z^2}dz = \frac{2\pi cz                 {\rm Si}(2\pi z)+c\cos(2\pi z)-c+z{\rm Ci}(2\pi z)-z\log(z)}{\pi^2 z} \Bigg \rvert_\eps^c.$$
The limit of the primitive as $\eps \to 0$ is $\frac{\gamma + \log(2\pi)}{\pi^2}$ using the known asymptotics of ${\rm Ci}(z)$ for small $z$. So, 
$$\operatorname{Tr} S_{[0,c],[0,1]}^2 = c - \frac{\log(c)}{\pi^2} - \frac{1+\gamma + \log(2\pi)}{\pi^2} + \left(\frac{2}{\pi} c\left({\rm Si}(2\pi c)-\frac{\pi}{2}\right) + \frac{\cos(2\pi c)}{\pi^2} + \frac{{\rm Ci}(2\pi c)}{\pi^2}\right),$$
as required. Recall the known asymptotics for ${\rm Si}(t)$ and ${\rm Ci}(t)$ for large $t$
\begin{equation}\label{Si asymp}
{\rm Si}(t) = \frac{\pi}{2} - \cos(t) \sum_{n=0}^\infty \frac{(-1)^n (2n)!}{t^{2n+1}}+ \sin(t) \sum_{n=1}^\infty \frac{(-1)^n(2n-1)!}{t^{2n}},
\end{equation}
\begin{equation}\label{Ci asymp}
{\rm Ci}(t) = \sin(t) \sum_{n=0}^\infty \frac{(-1)^n(2n)!}{t^{2n+1}} + \cos(t)\sum_{n=1}^\infty \frac{(-1)^n(2n-1)!}{t^{2n}}.
\end{equation}
We note that the first term in the first sum in \eqref{Si asymp} cancels with $\frac{\cos(2\pi c)}{\pi^2}$ and the first term in the second sum in \eqref{Si asymp} cancels with the first term in the first sum in \eqref{Ci asymp}. Thus, the oscillating terms begin with $O(\frac{1}{c^2})$ term. Plugging these series into our formula gives the desired asymptotic expression.

\section{One-term asymptotics}

To establish one-term asymptotic \eqref{one-term} in general we first need to establish it for any two qualitatively different functions directly. For $f(\theta) = \theta$ we already know it, even without any error terms, so we will consider $f(\theta) = \theta^2$.

\begin{claim}\label{the trivial claim}
Let $A, B \subseteq \R^d$ be sets with finite measure. Then $S_{cA,B}^2$ is trace class and 
\[
\operatorname{Tr}S_{cA,B}^2 = c^d |A||B| + o(c^d) 
\]
as $c\to \infty$.
\end{claim}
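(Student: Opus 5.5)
Since $S_{cA,B}$ is trace class with $0\le S_{cA,B}\le 1$, its square is trace class with $\operatorname{Tr}S^2_{cA,B}\le \operatorname{Tr}S_{cA,B}=c^d|A||B|$. So only the lower bound $\operatorname{Tr}S_{cA,B}^2\ge c^d|A||B|-o(c^d)$ needs work. I would use the Hilbert--Schmidt formula from the previous section:
\[
\operatorname{Tr}S_{cA,B}^2=\iint 1_{cA}(x)1_{cA}(y)|\check 1_B(x-y)|^2\,dx\,dy=\int_{\R^d}|\check 1_B(z)|^2\,|cA\cap(cA-z)|\,dz .
\]
Substituting $z=cw$ gives $|cA\cap(cA-cw)|=c^d|A\cap(A-w)|$. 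Therefore
\[
\operatorname{Tr}S_{cA,B}^2=c^d\int_{\R^d}|\check 1_B(z)|^2\,|A\cap (A-z/c)|\,dz .
\]
By Plancherel, $\int|\check 1_B|^2=|B|$, and this is the source of the main term $|A||B|$.

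The remaining step is a limit argument in this integral. The function $w\mapsto |A\cap(A-w)|=\langle 1_A,1_{A-w}\rangle$ is continuous in $w$, because translation is continuous in $L^2$ for $1_A\in L^2$. At $w=0$ it equals $|A|$, and it is bounded by $|A|$ everywhere. Hence for each fixed $z$ the integrand $|\check 1_B(z)|^2|A\cap(A-z/c)|$ converges to $|\check 1_B(z)|^2|A|$ as $c\to\infty$. It is dominated by the integrable function $|A|\,|\check 1_B(z)|^2$. Dominated convergence then gives
\[
\int|\check 1_B(z)|^2|A\cap(A-z/c)|\,dz\to |A||B|,
\]
which is exactly $\operatorname{Tr}S^2_{cA,B}=c^d|A||B|+o(c^d)$.

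No step here is a real obstacle. The only point requiring care is justifying the Hilbert--Schmidt identity and the Fubini interchange when $A$ and $B$ are merely of finite measure rather than bounded. This holds because the kernel $1_{cA}(x)\check 1_B(x-y)1_{cA}(y)$ lies in $L^2(\R^{2d})$: its squared norm is at most $|cA|\,\|\check 1_B\|_2^2=c^d|A||B|$. Since the integrand is nonnegative, Tonelli's theorem applies directly. Continuity of the translation overlap needs only $1_A\in L^2$, so no regularity of $\partial A$ or $\partial B$ enters.
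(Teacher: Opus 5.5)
Your proof is correct and is essentially the paper's argument. The paper applies the same Hilbert--Schmidt formula, scaling and dominated convergence to $\operatorname{Tr}(S_{cA,B}-S_{cA,B}^2)=\|P_{cA}Q_BP_{cA^c}\|_2^2$, with overlap $|A\cap(A^c-x/c)|\to 0$. You apply them directly to $\|S_{cA,B}\|_2^2$ with the complementary overlap $|A\cap(A-x/c)|\to|A|$, and the two overlaps sum to $|A|$. Your preliminary bound $\operatorname{Tr}S_{cA,B}^2\le\operatorname{Tr}S_{cA,B}$ is harmless but redundant, since dominated convergence already gives the full limit.
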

\begin{proof}
It suffices to show that $\operatorname{Tr} \big[S_{cA,B}- S_{cA,B}^2\big] = o(c^d)$. By the $S-S^2$ trick we have
\[
S_{cA,B}- S_{cA,B}^2= \left|P_{cA} Q_B P_{cA^c}\right|^2,
\]
so the trace is given by the Hilbert--Schmidt norm squared of $P_{cA} Q_B P_{cA^c}$. Arguing like in the previous section, we find
\begin{align*}
\norm{P_{cA} Q_B P_{cA^c}}_2^2 = c^d \int_{\R^d} \left| \check{1}_B(x)\right|^2 \left|A \cap \left(A^c - \frac{x}{c}\right)\right| \, dx.
\end{align*}
The function $F_A(x) = |A \cap (A^c -x)| = 1_A \ast 1_{-A^c}(-x)$ is continuous, bounded, and satisfies $F_A(0) = 0$. The claim now follows immediately from the dominated convergence theorem. 
\end{proof}

\begin{proof}[Proof of Theorem \ref{one term general}]
Subtracting $\theta f(1)$ from $f$ we can without loss of generality assume that $f(1) = 0$. Pick a small $\frac{1}{2} > \eps > 0$. We split ${\rm Tr}f(S_{cA,B})$ into the terms with $\lambda_n > 1-\eps$ and $1-\eps \ge \lambda_n \ge 0$:
\begin{align*}
\MoveEqLeft{\rm Tr}f(S_{cA,B})= \sum_{\lambda_n(cA, B) > 1-\eps}f(\lambda_n(cA,B))  +\sum_{1-\eps\ge\lambda_n(cA, B) \geq 0 }f(\lambda_n(cA,B)).
\end{align*}
For the second sum we simply apply the linear bound $|f(\theta)| \leq C \theta$ and the elementary inequality $\theta \leq \frac{\theta(1-\theta)}{\eps}$, valid for $0\leq \theta \leq 1-\eps$. We find
\[
\left|\sum_{1-\eps\ge\lambda_n(cA, B) \geq 0 }f(\lambda_n(cA,B))\right| \leq \frac{C}{\eps} \operatorname{Tr}\big[ S_{cA,B} - S_{cA,B}^2\big],
\]
which is $o(c^d)$ for all fixed $0<\eps <\frac{1}{2}$ by Claim \ref{the trivial claim}.

For the first sum, by continuity of $f$ at $1$, for any $\delta > 0$ there is $\eps$ small enough such that $|f(\lambda_n(cA, B))| < \delta$ if $\lambda_n(cA, B) > 1-\eps$. Since $0 < \eps < \frac{1}{2}$, this implies that $|f(\lambda_n(cA,B)| < 2\delta \lambda_n(cA,B)$. Thus, the first sum is at most $2\delta c^d |A| |B|$. Since $\delta >0$ can be arbitrarily small, this gives us the result.
\end{proof}

Now, we turn to the proof of Theorem \ref{one term bounded}. This time, we will do a more complicated splitting of ${\rm Tr}f(S_{cA,B})$. 

\begin{proof}[Proof of Theorem \ref{one term bounded}]
As in the previous proof we can without loss of generality assume that $f(1) = 0$. We also remark that since $f$ is trace class admissible for $L^2(\R^d)$, $f$ must satisfy $\lim_{\theta\to 0^+} f(\theta) =0$, by an argument similar to the proof of \eqref{cont at 0}, just with the trace class admissible for $L^2(\R^d)$ condition instead of the area law admissible condition.

Since $A, B$ are bounded, there exist boxes $V_A, V_B$ such that $A\subseteq V_A$, $B\subseteq V_B$. Fix once and for all a large number $D$ to be determined later (it will only depend on $V_A, V_B$). Let $0 < \eps < \frac{1}{2}$ be a small number. We have
\begin{align*}
{\rm Tr}f(S_{cA,B}) &= \sum_{\lambda_n(cA, B) > 1-\eps}f(\lambda_n(cA,B))  +\sum_{1-\eps\ge\lambda_n(cA, B) \geq \eps }f(\lambda_n(cA,B))\\
&\quad +\sum_{\substack{0 \le \lambda_n(cA,B)< \eps \\ n\le Dc^d}} f(\lambda_n(cA,B))+ \sum_{\substack{0 \le \lambda_n(cA,B)< \eps\\ n> Dc^d}} f(\lambda_n(cA,B)).
\end{align*}
For the first sum for any $\delta > 0$ we can choose $\frac{1}{2}>\eps > 0$ small enough so that it is at most $2\delta |A| |B| c^d$, as in the previous proof. 

For the second sum we simply use that $f$ is bounded and the elementary inequality $1\leq \frac{\theta(1-\theta)}{\eps(1-\eps)}$ for $\theta\in [\eps, 1-\eps]$. Thus, the second sum is at most $\frac{ {\rm Tr}[S_{cA,B}-S_{cA,B}^2]}{\eps(1-\eps)}\sup_{t\in [0,1]} |f(t)|$, which is $o(c^d)$ by Claim \ref{the trivial claim}. 

For the third sum, we use that $\lim_{\theta\to 0^+} f(\theta) =0$ to conclude that for any $\delta > 0$ we can choose $\eps > 0$ small enough so that it is at most $\delta Dc^d$.

Finally, we turn to the most challenging fourth sum. Firstly, we estimate $|f(\lambda_n(cA, B))|$ by $M_0f(\lambda_n(cA,B))$. Since $M_0f$ is non-decreasing, we can upper bound $\lambda_n(cA,B)$ by $\lambda_n(cV_A, V_B)$ using Lemma \ref{enlargement}. So, it remains to show that
$$\sum_{n > Dc^d} M_0f(\lambda_n(cV_A, V_B)) = o(c^d).$$
We claim that if $D$ is large enough and $n> D c^d$, then $\lambda_n(cV_A, V_B) \le \alpha_d^{-c}$, where $\alpha_d$ is taken from Theorem \ref{two cubes}. This is equivalent to saying that $N_{\alpha_d^{-c}}(cV_A, V_B)\le Dc^d$. We have
$$N_{\alpha_d^{-c}}(cV_A, V_B) = N_{1/2}(cV_A,V_B) + \Lambda_{\alpha_d^{-c}}(cV_A,V_B).$$
The first term is $O(c^d)$ by \eqref{N 1/2} and the second term is $O(c^d)$ by \eqref{Lambda -, equivalence, two cubes}. Thus, $N_{\alpha_d^{-c}}(cV_A, V_B)\le Dc^d$ holds for large enough $D$. Given the claim, the computation \eqref{trace class condition bound} therefore shows that
\[
\sum_{n > Dc^d} M_0f(\lambda_n(cV_A, V_B)) \leq \sum_{\lambda_n(cV_a,V_B) \leq \alpha_d^{-c}}  M_0f(\lambda_n(cV_A, V_B)) = O(\log(c)^d) = o(c^d),
\]
which finishes the proof.
\end{proof}

\subsection{Counterexample for unbounded sets}
In this subsection we prove Proposition \ref{one term counterexample}. First, for convenience we replace $f$ with a monotone subordinate function $g$.
\begin{claim}
For any function $f:[0,1]\to \Cm$ such that $\lim_{x\to 0^+} \frac{|f(x)|}{x} = \infty$ there exist $\eps > 0$ and $g:[0,1]\to [0, \infty)$ such that $g$ is non-decreasing, $\lim_{x\to 0^+} \frac{g(x)}{x} = \infty$ and $g(x)\le |f(x)|$ for $0 \le x 
\le\eps$.
\end{claim}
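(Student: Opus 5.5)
We build $g$ from the lower envelope of $|f(y)|/y$ to the right of each point. Set $h(x) = |f(x)|/x$ for $0<x\le 1$. By assumption $h(x)\to\infty$ as $x\to 0^+$, so we can fix $\eps>0$ with $h(x)\ge 1$ on $(0,\eps]$. Define
\[
m(x) = \inf_{x\le y\le \eps} h(y), \qquad 0<x\le\eps .
\]
This $m$ is non-increasing as $x$ increases, since the infimum is taken over a shrinking set. It satisfies $1\le m(x)\le h(x)$. It also tends to $\infty$ as $x\to 0^+$. Indeed, for any $M$ there is $x_M$ with $h>M$ on $(0,x_M]$; then for $x\le x_M$ the infimum over $[x,\eps]$ splits into the part over $[x,x_M]$, which is at least $M$, and the part over $[x_M,\eps]$, which equals $m(x_M)$. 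One subtlety is that $m(x_M)$ could be small. To handle this we refine the choice: take a sequence $x_k\downarrow 0$ with $h\ge k$ on $(0,x_k]$, and put $\tilde m(x)=\min(m(x),k)$ for $x\in(x_{k+1},x_k]$.

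Simpler is to work directly with the infimum. Set
\[
\phi(x)=\inf_{0<y\le x}\ \sup\Bigl\{t:\ h\ge t \text{ on }(0,y]\Bigr\}, \qquad\text{or concretely}\qquad \phi(x)=\inf_{0<y\le x} h(y).
\]
Then $\phi$ is non-increasing in $x$, satisfies $\phi\le h$, and $\phi(x)\to\infty$ as $x\to 0^+$, because $h\to\infty$. This is the key object.

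Next we need $g(x)=x\psi(x)$ to be non-decreasing, where $\psi\le\phi$, $\psi\to\infty$, and $x\psi(x)$ is monotone. Since $\phi$ decreases in $x$, the product $x\phi(x)$ need not be monotone, so we regularize. Define
\[
g(x)=\sup_{0<y\le x} y\,\phi(x) \;=\; x\phi(x)
\]
which fails, so instead we use
\[
g(x)=\inf_{x\le y\le\eps} y\,\phi(y) \quad\text{for }0<x\le\eps, \qquad g(0)=0,
\]
and extend $g$ to $[0,1]$ by the constant $g(\eps)$. Three properties follow.
\begin{itemize}
\item[(i)] $g$ is non-decreasing, being an infimum over shrinking sets.
\item[(ii)] $g(x)\le x\phi(x)\le xh(x)=|f(x)|$ on $(0,\eps]$, and $g(0)=0\le |f(0)|$.
\item[(iii)] For $x\le y\le\eps$ we have $y\phi(y)\ge x\phi(y)$. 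Hence $g(x)\ge x\inf_{x\le y\le\eps}\phi(y)=x\phi(\eps)$, which is only a bounded multiple of $x$.
\end{itemize}

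The main obstacle is therefore showing $g(x)/x\to\infty$. Property (iii) gives only a bounded lower bound, so a finer estimate is needed. For $y\in[x,\sqrt{x}\,]$ we use $\phi(y)\ge\phi(\sqrt x)$, so $y\phi(y)\ge x\phi(\sqrt x)$. For $y\ge\sqrt x$ we use $y\phi(y)\ge \sqrt x\,\phi(\eps)$. Together these give
\[
\frac{g(x)}{x}\ge\min\Bigl(\phi(\sqrt x),\ \frac{\phi(\eps)}{\sqrt x}\Bigr).
\]
Both quantities tend to $\infty$ as $x\to0^+$: the first because $\phi\to\infty$, the second since $\phi(\eps)\ge 1>0$. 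Hence $g(x)/x\to\infty$, and $g$ has all the required properties.
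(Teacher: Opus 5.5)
Your final construction is correct, but it takes a different route from the paper.

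\textbf{The paper's route.} The paper works on a discrete set of scales. It chooses $y_k\downarrow 0$ with $|f(x)|\ge 2^k x$ on $(0,y_k]$ and $y_k\ge 2y_{k+1}$. It then lets $g$ interpolate linearly through the points $(y_k,2^{k-1}y_k)$, and takes $g$ constant on $[y_1,1]$. The doubling condition $y_k\ge 2y_{k+1}$ is exactly what makes $g$ non-decreasing. On the piece between $y_{k+1}$ and $y_k$ one has $2^{k-1}\le g(x)/x\le 2^k$, which gives $g\le |f|$ and $g(x)/x\to\infty$ at once.

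\textbf{Your route.} You take the running infimum $\phi(x)=\inf_{0<y\le x}|f(y)|/y$ and then $g(x)=\inf_{x\le y\le\eps} y\phi(y)$. This $g$ is the largest non-decreasing minorant of $x\phi(x)$ on $(0,\eps]$. Monotonicity and $g\le|f|$ are then automatic, so the only real work is your $\sqrt{x}$-splitting. It correctly gives $g(x)/x\ge\min(\phi(\sqrt{x}),\phi(\eps)/\sqrt{x})$ for $0<x\le\eps^2$. State that restriction explicitly, since the split needs $\sqrt{x}\le\eps$.

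\textbf{What each buys.} The paper gets an explicit piecewise linear $g$ with two-sided control of $g(x)/x$ on each piece. Yours needs no choice of a sequence and is canonical. The intermediate $\phi$ is not even needed: $g(x)=\inf_{x\le y\le\eps}|f(y)|$ works with the same split. For $y\in[x,\sqrt{x}]$ you have $|f(y)|\ge x\inf_{0<t\le\sqrt{x}}|f(t)|/t$. For $y\in[\sqrt{x},\eps]$ you have $|f(y)|\ge y\ge\sqrt{x}$, because $|f(t)|/t\ge 1$ on $(0,\eps]$.

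\textbf{False starts to delete.} The opening paragraph on $m$ is wrong as stated. Since $[x,\eps]$ shrinks as $x$ increases, $m$ is non-decreasing, not non-increasing; this is the same reasoning you later apply correctly in (i). Moreover $m(x)\le h(\eps)$ for all $x$, so $m$ cannot tend to $\infty$ as $x\to 0^+$. The $\sup$-based formulas for $\phi$ and $g$ that you then discard should also go. The proof consists only of the definitions of $\phi$ and $g$, together with (i), (ii) and the $\sqrt{x}$ estimate.
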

\begin{proof}
Since $\lim_{x\to 0^+} \frac{|f(x)|}{x} = \infty$, for any $k\in \N$ there exists $y_k > 0$ such that for $0 < x \le y_k$ we have $|f(x)| \ge 2^k x$. Inductively making $y_k$ smaller if necessary we can additionally assume that $y_k \ge 2 y_{k+1}$. We define $g(y_k) = 2^{k-1}y_k$, between $y_k$ and $y_{k+1}$ we extend $g$ linearly, and for $x\in [y_1, 1]$ we set $g(x) = g(y_1)$. We are going to show that $g$ is non-decreasing, $\lim_{x\to 0^+} \frac{g(x)}{x} = \infty$ and $g(x)\le |f(x)|$ for $0 < x \le y_1$.

For the first assertion it is enough to check that $g(y_k) \ge g(y_{k+1})$ since linear functions do not change monotonicity. This is equivalent to $y_k \ge 2y_{k+1}$ which is true by our assumption. For the second assertion, we have $$\inf_{x\in [y_k, y_{k+1}]} \frac{g(x)}{x} = \frac{g(y_k)}{y_k}=2^{k-1},$$
which tends to infinity as $k \to \infty$, thus $\lim_{x\to 0^+} \frac{g(x)}{x} = \infty$. Analogously, we also have 
$$\sup_{x\in [y_k, y_{k+1}]} \frac{g(x)}{x} = \frac{g(y_{k+1})}{y_{k+1}}=2^{k},$$
thus $g(x) \le 2^{k}x$ for $x\in [y_k, y_{k+1}]$. On the other hand, for $x\in [y_k, y_{k+1}]$ we have $|f(x)| \ge 2^k x$, which gives us the last assertion. Finally, we put $g(0) = 0$ to complete the proof.
\end{proof}
For any sets $A, B$ of finite measure we clearly have that if $f(S_{A,B})$ is trace class then $g(S_{A, B})$ is trace class. Thus, given a set $B$ of positive and finite measure, it suffices to construct a set $A$ such that $g(S_{A,B})$ is not trace class, or equivalently $\operatorname{Tr} g(S_{cA,B}) = \infty$.

We will take $A = \bigcup_{k\in \N} A_k$ where each $A_k$ has measure $\frac{1}{2^k}$. By monotonicity, ${\rm Tr}\,g(S_{A,B}) \ge {\rm Tr}\,g(S_{A_k, B})$, thus it suffices to make ${\rm Tr}\,g(S_{A_k, B})$ tend to infinity. Since $\lim_{x\to 0^+} \frac{g(x)}{x} = \infty$, for each $k$ there exists $x_k > 0$ such that $g(x) \ge 4^k x$ for $0 < x \le x_k$. If $S_{A_k, B}$ does not have eigenvalues larger than $x_k$ then we have
$${\rm Tr}\,g(S_{A_k, B}) = \sum_{n=1}^\infty g(\lambda_n(A_k, B)) \ge \sum_{n=1}^\infty 4^k \lambda_n(A_k, B) = \frac{4^k}{2^k}|B| = 2^k|B|,$$
which tends to infinity as $k\to \infty$.

So, it remains to construct sets $A_k$ of measure $\frac{1}{2^k}$ such that $S_{A_k, B}$ does not have eigenvalues larger than $x_k$. This is equivalent to demanding that $\lambda_1(A_k, B) \le x_k$. We clearly have 
$$\lambda_1(A_k, B) \le \left(\sum_{n=1}^\infty \lambda_n(A_k,B)^2\right)^{1/2} = \left({\rm Tr}S_{A_k,B}^2\right)^{1/2}.$$
Thus, if we can make ${\rm Tr}S_{A_k,B}^2$ as small as we like, we will get the desired inequality. We pick a large number $N$ and let $A_k$ be a union of $N$ vastly separated intervals of length $\frac{1}{N2^k}$:
$$A_k = \bigcup_{m=1}^N \left[Nm, Nm + \frac{1}{N2^k}\right].$$

We have 
$${\rm Tr}S_{A_k,B}^2 = \int_\R |\check{1}_B(x)|^2 \left|A_k \cap (A_k - x)\right|\,dx.$$
Direct inspection shows that for $\frac{1}{N2^k}< |x| < N-\frac{1}{N2^k}$ we have $A_k \cap (A_k - x) = \varnothing$ and for $x$ outside this range the measure of the intersection is clearly at most $\frac{1}{2^k}$. Thus, 
$${\rm Tr}S_{A_k,B}^2 \le \frac{1}{2^k}\int_{|x| < \frac{1}{N2^k}} |\check{1}_B(x)|^2dx + \frac{1}{2^k}\int_{|x| >N- \frac{1}{N2^k}} |\check{1}_B(x)|^2dx.$$
As $N\to \infty$, by the dominated convergence theorem both of these integrals go to $0$. Thus, we can make ${\rm Tr}S_{A_k,B}^2$ as small as we like by taking $N$ large enough, as required.

\subsection*{Acknowledgments} Aleksei Kulikov and Martin Dam Larsen were supported by the VILLUM Centre of Excellence for the Mathematics of Quantum Theory (QMATH) with Grant No.10059.

\bibliography{ref}

\end{document}